\newtheorem{tw}{Theorem}[section]
\newtheorem{pr}[tw]{Proposition}
\newtheorem{lm}[tw]{Lemma}
\newtheorem{cor}[tw]{Corollary}
\theoremstyle{definition}
\newtheorem{ex}[tw]{Example}
\DeclareMathOperator{\Irr}{Irr}
\DeclareMathOperator{\Sqf}{Sqf}
\title{On properties of composites and monoid domains}
\author{\L ukasz Matysiak\\
Kazimierz Wielki University\\
Bydgoszcz, Poland \\
lukmat@ukw.edu.pl}
\title{On properties of composites and monoid domains}
\begin{document}

\maketitle

\begin{abstract}
In this paper I consider all possible properties from commutative algebra for polynomial composites and monoid domains. The aim is full characterization of these structures. I start with the examination of group, ring, modules properties, graded, but also the study of invertible elements, irreducible elements, ideals, etc. in these structures.
In the second part of the work I give examples of the use of composites and monoid domains in cryptology. Each such polynomial is the sum of the products of the variable and the coefficient. And what if subsequent coefficient sets are appropriate cryptographic systems? Similarly, monoid domains can be a very good tool between encrypting and decrypting messages. 
\end{abstract}

\begin{table}[b]\footnotesize\hrule\vspace{1mm}
	Keywords: cryptology, domain, field, irreducible element, monoid, polynomial.\\
2010 Mathematics Subject Classification:
Primary 13F20, Secondary 08A40.
\end{table}

\section{Introduction}

By a ring we mean a commutative ring with unity. Let $R$ be a ring.
We denote by $R^{\ast}$ the group of all invertible elements of $R$. The set of all irreducible elements in $R$ will be denoted by $\Irr R$.
Recall that a GCD-domain is an integral domain $R$ with the property that any two elements have a a greatest common divisor (GCD); i.e., there is a unique minimal principal ideal containing the ideal generated by given two elements. We can find much informations about GCD-domains. I refer to \cite{xx1}, \cite{8}, \cite{3}.

\medskip

The main motivation of this paper is description of some algebraic objects in the language of commutative algebra. D.D.~Anderson, D.F.~Anderson, M. Zafrullah in \cite{1} called object $A+XB[X]$ as a composite for $A\subset B$ fields. If $B$ is a domain and $M$ is an additive cancellative monoid we can define a monoid domain $B[M]=\{a_0X^{m_0}+\dots +a_nX^{m_n}: a_0, \dots , a_n\in B, m_1, \dots m_n\in M\}$. Monoid domains appear in many works such that \cite{2}, \cite{9}.

\medskip

There are a lot of works where composites are used as examples to show some properties. But the most important works are presented below.

\medskip

In 1976 \cite{y1} authors considered the structures in the form $D+M$, where $D$ is a domain and $M$ is a maximal ideal of ring $R$, where $D\subset R$. Later (\ref{t2}), we could prove that in composite in the form $D+XK[X]$, where $D$ is a domain, $K$ is a field with $D\subset K$, that $XK[X]$ is a maximal ideal of $K[X]$. 
Next, Costa, Mott and Zafrullah (\cite{y2}, 1978) considered composites in the form $D+XD_S[X]$, where $D$ is a domain and $D_S$ is a localization of $D$ relative to the multiplicative subset $S$. 
In 1988 \cite{y5} Anderson and Ryckaert studied classes groups $D+M$.
Zafrullah in \cite{y3} continued research on structure $D+XD_S[X]$ but 
he showed that if $D$ is a GCD-domain, then the behaviour of $D^{(S)}=\{a_0+\sum a_iX^i\mid a_0\in D, a_i\in D_S\}=D+XD_S[X]$ depends upon the relationship between $S$ and the prime ideals $P$ od $D$ such that $D_P$ is a valuation domain (Theorem 1, \cite{y3}).
Fontana and Kabbaj in 1990 (\cite{y4}) studied the Krull and valuative dimensions of composite $D+XD_S[X]$. 
In 1991 there was an article (\cite{1}) that collected all previous composites and the authors began to create a theory about composites creating results. In this paper, the structures under consideration were officially called as composites. 
After this article, various minor results appeared. But the most important thing is that composites have been used in many theories as examples. That is why I decided to examine all possible properties of composites for commutative algebra. I put the first results in \cite{5}, and the next results I put in this article.

\medskip

In the second chapter I will present a more general concept of a composite defining two types and $I(B,A)$, and monoid domain $F[X;M]$, with different configurations of rings, domains, fields, etc. We will show some simple properties from commutative algebra for these structures. In the Lemma \ref{p1} it will turn out that one of the types of composites together with the multiplicative action can not be a semigroup. In the Theorem \ref{t2} we show that every nonzero prime ideal in the composite is the maximal ideal. We will also see what are the irreducible elements of such composites. In the Theorem \ref{t3} we will look at what elements are irreducible in the considered structures. At the end of the chapter, we will look at the multiplicative systems of these structures.

\medskip

In the third chapter I present simple observations by building on $R$-modules from the considered structures (Proposition \ref{prpr3}). I also asked a question about exact sequence for composites. The next chapter is devoted to locations in composites. In the fifth chapter, I mention graded rings and modules.

\medskip

The main motivation of sixth chapter is description composites and monoid domains in language for the monoids/gropus (for composites) and for rings (for monoid rings).

The structures described in this paper often appear in the form of examples in many works. The aim of this work will be to examine as much as possible properties and applications in commutative algebra.

\medskip

In last chapters of this paper I give an example of the use of composites and monoid domains in cryptology.

\section{Basic properties}
\label{S1}

The aim of this chapter will be to examine the simplest structural properties of the considered structures.

Consider $A$ and $B$ as rings such that $A\subset B$.
Put $T=A+XB[X]$. The structure defined in this way is called a composite. (The definition comes from \cite{1}). Later results will require more assumptions.

\medskip

\noindent
Let us generalized the concept of a composite in two different directions.

\medskip

Consider $A_0, A_1, \dots , A_{n-1}$ and $B$ be rings for any $n \geq 0$ such that $A_0\subset A_1\subset \dots \subset A_{n-1}\subset B$.
Put $T_n=A_0+A_1X+\dots + A_{n-1}X^{n-1} + X^nB[X]$.

\medskip

\noindent
And let other $A_0, A_1, \dots , A_{n-1}$ and $B$ be rings for any $n \geq 0$ such that there exists $i\in\{0, 1, \dots , n-1\}$, where $A_i\not\subset A_{i+1}$ and for every $j\in\{0, 1, \dots , n-1\}$ we have $A_j\subset B$.
Put $T'_n=A_0+A_1X+\dots + A_{n-1}X^{n-1} + X^nB[X]$.

\medskip

Also consider $I(B,A)=\{f\in B[X], f(A)\subseteq A\}$, where $A\subset B$ are rings and monoid domain $B[M]$, where $B$ be a domain and $M$ be a submonoid of $\mathbb{Q}_+$.

\medskip

Throughout the article, we will use the above denotation. If the above structures require stronger assumptions, then of course this will be included.

\medskip

It is easy to check the following Lemmas.

\medskip

\begin{lm}
	\label{l1}
	$T_n$ is a ring, $T_n\subset T$ and $T'_n\subset T$.
\end{lm}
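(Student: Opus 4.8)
The plan is to verify each of the three assertions directly from the definitions. For the claim that $T_n = A_0 + A_1X + \dots + A_{n-1}X^{n-1} + X^nB[X]$ is a ring, I would first observe that $T_n$ is visibly an additive subgroup of $B[X]$: a sum of two elements of $T_n$ again has its coefficient of $X^i$ (for $i < n$) lying in $A_i$ since each $A_i$ is an additive subgroup of $B$, and the high-degree part stays in $X^nB[X]$; closure under negation is immediate. The only real content is closure under multiplication. Here I would take $f = \sum_{i} a_i X^i$ and $g = \sum_j b_j X^j$ in $T_n$ and examine the coefficient of $X^k$ in $fg$ for $k < n$, namely $\sum_{i+j=k} a_i b_j$. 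For each term with $i + j = k < n$ we have $i \le k \le n-1$ and $j \le k \le n-1$, so $a_i \in A_i$ and $b_j \in A_j$; since $A_i \subseteq A_{k}$ and $A_j \subseteq A_{k}$ by the chain hypothesis $A_0 \subseteq A_1 \subseteq \dots \subseteq A_{n-1}$, and $A_k$ is a ring, the product $a_i b_j$ lies in $A_k$, hence so does the sum. Thus the $X^k$-coefficient of $fg$ lies in $A_k$ for all $k < n$, and the remaining terms lie in $X^nB[X]$; together with $1 = 1 \in A_0$ (using that each $A_i$ is a ring with the same unity, or at least contains $1$) this shows $T_n$ is a subring of $B[X]$.

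For the inclusion $T_n \subset T = A + XB[X]$, I would note that this requires interpreting the statement in the natural setting where the relevant rings are nested compatibly; presumably here one reads $A = A_0$ (or more precisely one needs $A_i \subseteq A$ for all $i$, which holds when $A_0 \subseteq \dots \subseteq A_{n-1} \subseteq A \subseteq B$, or simply takes the ambient $A$ to contain all the $A_i$). Granting that, an element $a_0 + a_1 X + \dots + a_{n-1}X^{n-1} + X^n h(X)$ of $T_n$ has constant term $a_0 \in A_0 \subseteq A$ and every higher term divisible by $X$, so it lies in $A + XB[X] = T$; hence $T_n \subseteq T$. The analogous argument gives $T'_n \subseteq T$: the defining condition for $T'_n$ only weakens the nesting among the $A_i$ but still demands $A_j \subseteq B$ for all $j$ (and the constant coefficient lies in $A_0 \subseteq A$), so the same "every term past the constant is divisible by $X$" observation applies, and no use of the chain condition is needed for this particular inclusion.

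I expect the main obstacle is not any deep argument but rather pinning down the precise hypotheses the author intends, since the statement "$T_n \subset T$" and "$T'_n \subset T$" is only sensible if $A$ and the $A_i$ are suitably related (the cleanest reading being $A = A_0$ together with the given chains, so that $A_i \subseteq A_0$... — actually one needs $A_i \subseteq A$, which forces $A \supseteq A_{n-1}$ in the $T_n$ case). I would state this compatibility explicitly at the start of the proof. Beyond that bookkeeping, everything reduces to the three routine verifications above: additive-subgroup closure, the degree count showing $a_i b_j \in A_{i+j}$ when $i+j < n$ (the one place the chain hypothesis is used), and the trivial "divisible by $X$" observation for the two inclusions.
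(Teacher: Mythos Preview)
Your argument is correct in all essentials. The paper itself supplies no proof for this lemma beyond the phrase ``It is easy to check the following Lemmas,'' so there is nothing substantive to compare against; your verification that $T_n$ is closed under multiplication via the chain condition $A_i \subseteq A_k$ for $i \le k$ is exactly the computation one expects, and your treatment of the two inclusions is the obvious one.

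You are also right to flag the ambiguity in the hypotheses: as written, the paper introduces $A$ (for $T$) and the $A_i$ (for $T_n$, $T'_n$) independently, so ``$T_n \subset T$'' only makes sense once one fixes a relationship between them. The surrounding text (in particular the lemma immediately following, which lands products in $A_0 + XB[X]$) strongly suggests the intended reading is $A = A_0$, under which both inclusions are immediate since the constant term lies in $A_0$ and all higher terms lie in $XB[X]$. Your instinct to state this compatibility explicitly is sound and would improve on the paper's presentation.
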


\begin{lm}
	\label{p1}
	If $f, g\in T'_n$, where $n>0$, then $fg\in A_0+XB[X]$.
\end{lm}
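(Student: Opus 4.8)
The plan is to unwind the definition of $T'_n$ and track only the constant coefficient of the product, since everything else is automatic. Write
\[
f = a_0 + a_1 X + \dots + a_{n-1}X^{n-1} + X^n f_1, \qquad
g = b_0 + b_1 X + \dots + b_{n-1}X^{n-1} + X^n g_1,
\]
where $a_i, b_i \in A_i$ for each $i$ and $f_1, g_1 \in B[X]$. Since each $A_i \subseteq B$, both $f$ and $g$ lie in $B[X]$, and therefore $fg \in B[X]$ as well; thus every coefficient of $fg$ of positive degree automatically lies in $B$. It remains only to locate the constant term.

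The constant term of $fg$ is $a_0 b_0$. Because $A_0$ is a ring (hence closed under multiplication) and $a_0, b_0 \in A_0$, we get $a_0 b_0 \in A_0$. Consequently $fg = a_0 b_0 + X h$ for some $h \in B[X]$, which is precisely the assertion $fg \in A_0 + XB[X]$. Note that the hypothesis $n > 0$ is used exactly to guarantee that the constant terms $a_0,b_0$ are constrained to $A_0$ rather than merely to $B$.

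There is no real obstacle here; the content of the lemma is the contrast it exposes. For $1 \le i \le n-1$ the coefficient of $X^i$ in $fg$ equals $\sum_{j+k=i} a_j b_k$, a sum of products of elements drawn from the various $A_j$. Since the $A_j$ are only assumed to be subrings of $B$ and, by the defining property of $T'_n$, need not form a chain, there is no reason for this sum to lie in $A_i$. Hence $fg$ need not belong to $T'_n$, and one records that $T'_n$ with multiplication fails to be a semigroup; the lemma says the most one can assert in general is membership in the larger composite $A_0 + XB[X]$.
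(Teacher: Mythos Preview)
Your argument is correct and is exactly the natural verification the paper has in mind; the paper itself offers no proof beyond the remark that the lemma is ``easy to check,'' and your computation of the constant term together with the observation $f,g\in B[X]$ is all that is needed. Your final paragraph, explaining why the intermediate coefficients need not land in the $A_i$ and hence why $(T'_n,\cdot)$ fails to be a semigroup, is precisely the content of the subsequent Corollary~\ref{c1}.
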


\begin{cor}
	\label{c1}
	$(T'_n, \cdot)$ is not a semigroup. 
\end{cor}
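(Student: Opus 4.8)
The plan is to show that $\cdot$ is not even a binary operation on $T'_n$, i.e.\ that $T'_n$ is not closed under multiplication; since polynomial multiplication is automatically associative, closure is the only semigroup axiom that can fail, so exhibiting $f,g\in T'_n$ with $fg\notin T'_n$ is exactly what is needed.

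First I would unpack the defining hypothesis: there is an index $i\in\{0,1,\dots,n-1\}$ with $A_i\not\subset A_{i+1}$, and I would note that any such witness satisfies $i+1\le n-1$. Indeed, if $i=n-1$ the symbol $A_{i+1}$ can only be read as the coefficient ring $B$ of the tail $X^nB[X]$, and then $A_{n-1}\not\subset B$ would contradict the standing assumption that every $A_j$ is a subring of $B$; in particular the small cases $n=0,1$ do not produce a $T'_n$ of this second type at all. So fix such an $i\le n-2$ and choose $a\in A_i\setminus A_{i+1}$.

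The counterexample is then $f=aX^i$ and $g=X$. Both lie in $T'_n$: the only nonzero coefficient of $f$ is $a\in A_i$, sitting in degree $i\le n-1$, and the only nonzero coefficient of $g$ is $1\in A_1$, sitting in degree $1$ (each $A_j$, being a subring of $B$, contains the identity). However $fg=aX^{i+1}$, and since $1\le i+1\le n-1$, membership of $fg$ in $T'_n$ would force $a\in A_{i+1}$, which is false. Hence $T'_n$ is not closed under multiplication, so $(T'_n,\cdot)$ is not a semigroup. For coherence with Lemma~\ref{p1} I would add the remark that this particular product does land in $A_0+XB[X]$, illustrating that the set $A_0+XB[X]$ supplied by that lemma is genuinely larger than $T'_n$.

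I do not expect any real difficulty. The only point demanding a little care is the boundary behaviour of the index condition --- the meaning of $A_{i+1}$ when $i=n-1$, and the degenerate values of $n$ --- which is why I would pin down $i+1\le n-1$ before constructing $f$ and $g$; after that the argument is a one-line comparison of degrees.
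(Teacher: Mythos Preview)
Your proof is correct. The paper gives no argument for this corollary beyond placing it immediately after Lemma~\ref{p1}; the intended inference is that products land only in $A_0+XB[X]$ and hence need not remain in $T'_n$, and your explicit product $aX^i\cdot X=aX^{i+1}\notin T'_n$ supplies exactly the missing witness, with a more careful treatment of the boundary case $i=n-1$ than anything in the paper.
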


\begin{lm}
	$I(B,A)$ is a ring.
\end{lm}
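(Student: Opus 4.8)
The statement is that $I(B,A)=\{f\in B[X] : f(A)\subseteq A\}$ is a ring, where $A\subset B$ are (commutative) rings. Since $I(B,A)$ is by definition a subset of the ring $B[X]$, the plan is simply to verify that it is a subring: it contains the constants $0$ and $1$ (indeed it contains all of $A$ as constant polynomials, since $A\cdot A\subseteq A$), and it is closed under subtraction and multiplication. All three closure facts follow from one elementary observation: for a fixed $a\in A$, the evaluation map $\mathrm{ev}_a\colon B[X]\to B$, $f\mapsto f(a)$, is a ring homomorphism. Hence if $f,g\in I(B,A)$ then for every $a\in A$ we have $(f-g)(a)=f(a)-g(a)\in A$ and $(fg)(a)=f(a)g(a)\in A$, because $A$ is itself a subring of $B$ and is therefore closed under subtraction and multiplication. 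This shows $f-g,\ fg\in I(B,A)$, and together with $1\in I(B,A)$ we conclude $I(B,A)$ is a subring of $B[X]$, hence a ring.

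First I would state explicitly that $A\subseteq I(B,A)$: a constant polynomial $c$ with $c\in A$ satisfies $c(a)=c\in A$ for all $a\in A$, so in particular $0,1\in I(B,A)$ and $I(B,A)\neq\emptyset$. Next I would record that for each fixed $a\in A\subseteq B$ the substitution homomorphism $\mathrm{ev}_a\colon B[X]\to B$ is a homomorphism of rings — this is the standard universal property of the polynomial ring $B[X]$. Then the closure conditions are immediate: given $f,g\in I(B,A)$ and $a\in A$, $\mathrm{ev}_a(f-g)=\mathrm{ev}_a(f)-\mathrm{ev}_a(g)\in A$ and $\mathrm{ev}_a(fg)=\mathrm{ev}_a(f)\mathrm{ev}_a(g)\in A$, using that $A$ is a subring of $B$. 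Finally I would invoke the subring criterion to conclude.

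There is essentially no obstacle here; the only thing to be careful about is the convention, mentioned in the introduction, that "ring" means commutative ring with unity, so one must check that $1\in I(B,A)$ (which is clear) and not merely that $I(B,A)$ is closed under the operations. One could also note, as a remark, that $I(B,A)$ is an $A$-subalgebra of $B[X]$ containing $X^0\cdot A$ but in general strictly smaller than $A[X]$ is larger than — more precisely $A[X]\subseteq I(B,A)$ always holds since any polynomial with coefficients in $A$ maps $A$ into $A$ — though this containment is not needed for the statement. The proof is a routine application of the fact that evaluation is a ring homomorphism, so I would keep it to a few lines.
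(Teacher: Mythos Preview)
Your proof is correct and is exactly the routine subring verification the paper has in mind; the paper itself offers no proof beyond the remark ``It is easy to check the following Lemmas,'' and your argument via the evaluation homomorphisms $\mathrm{ev}_a$ is the standard way to make that check precise.
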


Now let's look at invertible and nilpotent elements.

\begin{pr}
	\label{p2}
	Let $f=a_0+a_1X+\dots + a_nX^n\in T$ for any $n\geq 0$. Then $f\in T^{\ast}$ if and only if $a_0\in A^{\ast}$ and $a_1, a_2, \dots , a_n$ are nilpotents.
\end{pr}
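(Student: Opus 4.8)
The plan is to reduce the statement to the classical description of units in the polynomial ring $B[X]$ and then keep track of where the coefficients actually live. I will use as a known fact that $f=a_0+a_1X+\dots+a_nX^n$ is a unit of $B[X]$ if and only if $a_0\in B^{\ast}$ and $a_1,\dots,a_n$ are nilpotent in $B$.

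For the forward implication, suppose $f\in T^{\ast}$, so there is $g=b_0+b_1X+\dots+b_mX^m\in T$ with $fg=1$. Since $T\subseteq B[X]$ by construction, the identity $fg=1$ also holds in $B[X]$, and the cited fact gives that $a_1,\dots,a_n$ are nilpotent in $B$ and $a_0\in B^{\ast}$. To upgrade this to $a_0\in A^{\ast}$, I compare the constant terms on both sides of $fg=1$, obtaining $a_0b_0=1$; but $f,g\in T=A+XB[X]$ force $a_0,b_0\in A$, so $b_0$ is an inverse of $a_0$ inside $A$, i.e. $a_0\in A^{\ast}$.

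For the converse, assume $a_0\in A^{\ast}$ and $a_1,\dots,a_n$ are nilpotent in $B$. Put $h=a_1X+\dots+a_nX^n\in XB[X]$; as a sum of nilpotent elements of the commutative ring $B[X]$ it is nilpotent, say $h^{k}=0$. Then $u:=a_0^{-1}h\in XB[X]$ is nilpotent as well (using $a_0^{-1}\in A\subseteq B$ and $u^{k}=a_0^{-k}h^{k}=0$), so $1+u$ is invertible with $(1+u)^{-1}=\sum_{i=0}^{k-1}(-1)^{i}u^{i}=1+v$, where $v=\sum_{i=1}^{k-1}(-1)^{i}u^{i}\in XB[X]$. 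Writing $f=a_0+h=a_0(1+u)$, I conclude that $f$ is invertible in $B[X]$ with
\[
f^{-1}=a_0^{-1}(1+v)=a_0^{-1}+a_0^{-1}v .
\]
Since $a_0^{-1}\in A$ and $a_0^{-1}v\in XB[X]$, this shows $f^{-1}\in A+XB[X]=T$, hence $f\in T^{\ast}$.

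The only genuinely delicate points are the two ``return to $T$'' steps. In the forward direction one must read off the constant term to see that the inverse forces $a_0\in A^{\ast}$ and not merely $a_0\in B^{\ast}$; in the converse one must check that the inverse manufactured inside $B[X]$ really lands back in $T$, which is exactly the computation $f^{-1}=a_0^{-1}+a_0^{-1}v\in A+XB[X]$. Everything else is the standard unit-plus-nilpotent argument for polynomial rings.
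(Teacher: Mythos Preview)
Your proof is correct and follows essentially the same route as the paper: both reduce to the classical characterization of units in a polynomial ring $R[X]$ and then observe that the constant-term condition forces $a_0\in A^{\ast}$ rather than merely $a_0\in B^{\ast}$. Your argument is considerably more detailed than the paper's---in particular, the paper does not spell out the converse, whereas you explicitly construct $f^{-1}=a_0^{-1}+a_0^{-1}v$ and verify it lands in $A+XB[X]$---but the underlying idea is the same.
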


\begin{proof}
	We know that if $R$ is a ring then $f=a_0+a_1X+\dots + a_nX^n\in R[X]^{\ast}$ if and only if $a_0\in R^{\ast}$ and $a_1, a_2, \dots , a_n$ are nilpotents. In our Proposition we have $a_1, a_2, \dots , a_n$ are nilpotents. Of course we get $a_0\in A^{\ast}$.
\end{proof}

\begin{pr}
	\label{p3}
	Let $f=a_0+a_1X+\dots a_{n-1}X^{n-1}+a_nX^n+\dots +a_mX^m\in T_n$, ($T'_n$, respectively), where $0\leq n\leq m$ and $a_i\in A_i$ for $i=0, 1, \dots , n$ and $a_j\in B$ for $j=n, n+1, \dots , m$.
	\begin{itemize}
		\item[(i) ] $f\in T_n^{\ast}$, (${T'_n}^{\ast}$, respectively) if and only if $a_0\in A_0^{\ast}$ and $a_1, a_2, \dots , a_m$ are nilpotents.
		\item[(ii) ] $f$ is a nilpotent if and only if $a_0, a_1, \dots , a_m$ are nilpotents.
	\end{itemize}
\end{pr}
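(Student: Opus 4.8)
The plan is to reduce everything to the classical description of units and nilpotents in a polynomial ring $B[X]$, exactly as in the proof of Proposition~\ref{p2}, and then keep careful track of the extra constraint that the constant term lives in $A_0$ rather than in $B$. For part (i): if $f\in T_n^\ast$ (or ${T'_n}^\ast$), then since $T_n\subset T\subset B[X]$ and the inverse of $f$ also lies in $T_n\subset B[X]$, $f$ is a unit of $B[X]$; hence $a_0\in B^\ast$ and $a_1,\dots,a_m$ are nilpotent. The point is to upgrade $a_0\in B^\ast$ to $a_0\in A_0^\ast$: write $g=b_0+b_1X+\dots\in T_n$ for the inverse, so $a_0b_0=1$; the constant term $b_0$ of $g$ lies in $A_0$ by definition of $T_n$ (resp. $T'_n$), so $a_0$ is invertible already in $A_0$. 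Conversely, if $a_0\in A_0^\ast$ and $a_1,\dots,a_m$ are nilpotent, then $f$ is a unit of $B[X]$ by the classical criterion, and its inverse can be written down explicitly: $f=a_0(1+u)$ where $u=a_0^{-1}(a_1X+\dots+a_mX^m)$ is nilpotent in $B[X]$, so $f^{-1}=a_0^{-1}\sum_{k\ge 0}(-u)^k$. One must check this inverse actually lies in $T_n$ (resp. $T'_n$): its constant term is $a_0^{-1}\in A_0$, and all higher-degree coefficients lie in $B$ because $u\in XB[X]$, so every power $(-u)^k$ for $k\ge 1$ contributes only to $XB[X]$; in the $T_n$ case one notes additionally that the coefficients of degree $1,\dots,n-1$ must lie in $A_1,\dots,A_{n-1}$, which holds since those $A_i$ contain $A_0\ni a_0^{-1}$ times nilpotents — here one uses that nilpotent elements of $B$ need not lie in $A_i$, so this is the step requiring care. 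Actually the cleanest route for the ``only if'' direction of membership is simply: $f^{-1}\in B[X]$ is forced, and $f\cdot f^{-1}=1\in T_n$ together with $f\in T_n$ does not immediately give $f^{-1}\in T_n$, so one genuinely needs the explicit series and the observation that all its coefficients past the constant term are nilpotent (being $B$-multiples of the nilpotent $u$), hence can be placed in the $XB[X]$ tail.

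For part (ii): again $T_n\subset B[X]$, and in $B[X]$ a polynomial is nilpotent iff all its coefficients are nilpotent. So if $f$ is nilpotent in $T_n$ it is nilpotent in $B[X]$, giving $a_0,a_1,\dots,a_m$ all nilpotent. Conversely, if all $a_i$ are nilpotent, then $f$ is nilpotent in $B[X]$; since some power $f^N=0$ and $0\in T_n$, nilpotency in the subring $T_n$ follows immediately (the relation $f^N=0$ holds verbatim in $T_n$). The only subtlety is that $a_0$ nilpotent in $B$ should be recorded as $a_0$ nilpotent in $A_0$ — but a nilpotency relation $a_0^k=0$ holds in $A_0$ as well since $A_0\subset B$.

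The main obstacle I anticipate is the membership bookkeeping in part (i): verifying that the explicit inverse $a_0^{-1}\sum_{k\ge0}(-u)^k$ respects the layered structure $A_0+A_1X+\dots+A_{n-1}X^{n-1}+X^nB[X]$, and in particular that one is not secretly claiming that nilpotents of $B$ lie in the smaller rings $A_i$. The resolution is that all coefficients of strictly positive degree in the inverse are absorbed into the $X^nB[X]$ tail once $n$ is taken into account — but to land a degree-$j$ coefficient with $1\le j\le n-1$ inside $A_j$ one must observe it equals $-a_0^{-1}a_j$ plus higher corrections, and $a_j\in A_j$ while $a_0^{-1}\in A_0\subset A_j$; the higher corrections are sums of products of the $a_i$, $i\ge 1$, which are in $B$ but multiply into degrees $\ge 2$, so for $j\ge 1$ one must argue these too sit in $A_j$ — this forces the hypothesis $a_0\in A_0^\ast$ to be used repeatedly and is where the proof of the $T_n$ case is genuinely more delicate than the $T'_n$ case, where no intermediate containment is assumed and everything beyond the constant term simply lands in $B$.
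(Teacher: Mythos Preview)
Your approach is exactly the paper's: the paper's entire proof is the single line ``Analogous proof like in Proposition~\ref{p2}'', i.e.\ reduce to the classical unit/nilpotent criterion in $B[X]$ and then observe that the constant term lies in $A_0$. You have gone well beyond this by actually thinking through the converse of (i), where one must check that the inverse manufactured inside $B[X]$ lands back in $T_n$; the paper simply does not address that point.

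Two remarks on your loose ends. First, for $T_n$ the membership check you worry about closes cleanly once you use the chain $A_0\subset A_1\subset\dots\subset A_{n-1}$: the coefficient of $X^j$ in $f^{-1}=a_0^{-1}\sum_{k\ge 0}(-u)^k$ (for $1\le j\le n-1$) is an $A_0$-linear combination of products $a_{i_1}\cdots a_{i_k}$ with $i_1+\dots+i_k=j$ and each $i_\ell\ge 1$; hence each $i_\ell\le j$, so $a_{i_\ell}\in A_{i_\ell}\subset A_j$, and the whole expression lies in $A_j$. That is the missing line in your sketch. Second, your final sentence about $T'_n$ is not right: $T'_n$ still requires the degree-$j$ coefficient to lie in $A_j$ for $j<n$, so it is not true that ``everything beyond the constant term simply lands in $B$''. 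Worse, since $(T'_n,\cdot)$ is not even a semigroup (Corollary~\ref{c1}), the symbol ${T'_n}^{\ast}$ has no clear meaning, and the chain argument above is unavailable. This is a defect of the paper's statement rather than of your argument --- the paper's one-line proof does not resolve it either.
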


\begin{proof}
	Analogous proof like in Proposition \ref{p2}.
\end{proof}

\begin{pr}
	Let $f=a_0+a_1X+\dots + a_nX^n\in I(B,A)$, where $A\subset B$ are domains. 
	\begin{itemize}
		\item[(i) ] $f\in I(B,A)^{\ast}$ if and only if $a_0\in A^{\ast}$ and $a_1, a_2, \dots , a_n$ are nilpotents.
		\item[(ii) ] $f$ is nilpotent if and only if $a_0, a_1, \dots , a_n$ are nilpotents.
	\end{itemize}
\end{pr}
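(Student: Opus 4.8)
The plan is to mimic the argument of Proposition \ref{p2}, using the ambient ring $B[X]$ as the reference. Recall the classical fact that for a commutative ring $R$, a polynomial $f = a_0 + a_1X + \dots + a_nX^n \in R[X]$ is a unit iff $a_0 \in R^{\ast}$ and $a_1, \dots, a_n$ are nilpotent in $R$, and $f$ is nilpotent iff every $a_i$ is nilpotent in $R$. Since $I(B,A)$ is a ring (by the Lemma just above) with $I(B,A) \subseteq B[X]$, and since $B$ is a domain, the only nilpotent element of $B$ is $0$. So for $f \in I(B,A)$ the conditions "$a_1, \dots, a_n$ nilpotent in $B$" and "$a_1, \dots, a_n$ nilpotent in $A$" both collapse to "$a_1 = \dots = a_n = 0$", which is why the statement can be phrased either way.

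For (i): if $f \in I(B,A)^{\ast}$, there is $g \in I(B,A) \subseteq B[X]$ with $fg = 1$, so $f \in B[X]^{\ast}$; by the classical fact $a_1 = \dots = a_n = 0$ and $a_0 \in B^{\ast}$, hence $f = a_0$ is a nonzero constant. Now I must upgrade $a_0 \in B^{\ast}$ to $a_0 \in A^{\ast}$: writing $f^{-1} = b_0 + b_1X + \dots \in I(B,A)$, comparing constant terms gives $a_0 b_0 = 1$, and I must show $b_0 \in A$. This follows from the defining condition $f^{-1}(A) \subseteq A$: since $a_0 \in A$ and $1 \in A$, pick any element $x \in A$ — actually the cleanest route is to note $f^{-1}(0) = b_0$, and $0 \in A$, so $b_0 = f^{-1}(0) \in A$; then $a_0 b_0 = 1$ with $a_0, b_0 \in A$ shows $a_0 \in A^{\ast}$. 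Conversely, if $a_0 \in A^{\ast}$ and $a_1 = \dots = a_n = 0$, then $f = a_0$ is constant with inverse $a_0^{-1} \in A \subseteq B[X]$; since $a_0^{-1}$ is a constant, it certainly maps $A$ into $A$, so $a_0^{-1} \in I(B,A)$ and $f \in I(B,A)^{\ast}$.

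For (ii): if $f$ is nilpotent in $I(B,A)$, say $f^k = 0$, then the same equation holds in $B[X]$, so by the classical fact each $a_i$ is nilpotent in $B$; as $B$ is a domain this forces $a_0 = a_1 = \dots = a_n = 0$, and in particular each $a_i$ is nilpotent in $A$. Conversely, if every $a_i$ is nilpotent in $A$ — equivalently (since $A$ is a domain) every $a_i = 0$ — then $f = 0$, which is trivially nilpotent. The only subtlety worth flagging is the upgrade $a_0 \in B^{\ast} \Rightarrow a_0 \in A^{\ast}$ in part (i), which is where the hypothesis "$A \subset B$ are domains" and, more precisely, the structural constraint "$f^{-1}(A) \subseteq A$" built into membership in $I(B,A)$ is actually used; everything else is a direct transcription of the polynomial-ring facts already invoked in Proposition \ref{p2}.
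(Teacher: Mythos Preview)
Your argument is correct and essentially the approach the paper has in mind: the proposition is stated in the paper without proof, immediately after Propositions~\ref{p2} and~\ref{p3}, and is evidently meant to be analogous. You carry out that analogy carefully, and in fact supply the one detail the paper glosses over --- namely the upgrade from $a_0\in B^{\ast}$ to $a_0\in A^{\ast}$ via $b_0=f^{-1}(0)\in A$ (and, implicitly, $a_0=f(0)\in A$) using the defining condition of $I(B,A)$.
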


\begin{pr}
	\label{prpr1}
	Let $B$ be a domain and $f=a_{m_1}X^{m_1}+a_{m_2}X^{m_2}+\dots +a_{m_n}X^{m_n}\in B[M]$, where $m_1, m_2, \dots , m_n\in M$ and $a_{m_1}, a_{m_2}, \dots , a_{m_n}\in B$. 
	\begin{itemize}
		\item[(i) ] $f\in B[M]^{\ast}$ if and only if there exist $m_i\in M$ and $a_{m_i}\in B^{\ast}$ such that $m_i=0$ and for every $k\neq m$ we have $a_k$ be nilpotents.
		\item[(ii) ] $f$ be a nilpotent if and only if $a_{m_1}, a_{m_2}, \dots , a_{m_n}$ are nilpotents.
	\end{itemize}
\end{pr}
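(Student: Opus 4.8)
The plan is to reduce everything to an ordinary polynomial ring over $B$ and then invoke the description of units and nilpotents already used in the proof of Proposition~\ref{p2}. The only device needed is a common denominator for exponents.

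For part (i), suppose $f=\sum_{i=1}^{n}a_{m_i}X^{m_i}\in B[M]^{\ast}$ with inverse $g=\sum_{j=1}^{l}b_{k_j}X^{k_j}\in B[M]$. The exponents $m_1,\dots,m_n,k_1,\dots,k_l$ form a finite subset of $M\subseteq\mathbb{Q}_+$, so there is a positive integer $d$ with $dm_i,\,dk_j\in\mathbb{Z}_{\geq 0}$ for all $i,j$. The substitution $Y=X^{1/d}$ identifies the subring $B\bigl[\frac{1}{d}\mathbb{Z}_{\geq 0}\bigr]$ of $B[\mathbb{Q}_+]$ with the polynomial ring $B[Y]$, and both $f$ and $g$ lie in it; hence $f\in B[Y]^{\ast}$. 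By the standard characterization recalled in Proposition~\ref{p2}, the constant term of $f$ as a polynomial in $Y$ lies in $B^{\ast}$ and every other coefficient is nilpotent. Under $Y=X^{1/d}$ the $Y$-degree-$0$ term is exactly the term $a_{m_i}X^{m_i}$ with $m_i=0$ (if $0$ does not occur among the $m_i$, the constant term is $0\notin B^{\ast}$, a contradiction), and the remaining condition is precisely that $a_k$ is nilpotent for every exponent $k\neq m_i$. The converse is the same identification run backwards: if $f$ is a unit modulo nilpotent terms then it becomes a unit in a suitable $B[Y]$, hence $f\in B[M]^{\ast}$. For part (ii) one argues identically: $f$ is nilpotent in $B[M]$ iff it is nilpotent in the polynomial ring $B[Y]$ obtained by clearing the denominators of its own exponents, and a polynomial over any ring is nilpotent iff all its coefficients are nilpotent.

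It is worth recording, as a remark, that since $B$ is a domain there are no nonzero nilpotents in $B$, so both statements collapse to: $f\in B[M]^{\ast}$ iff $f=a_0X^{0}$ with $a_0\in B^{\ast}$, and $f$ is nilpotent iff $f=0$. This can also be obtained directly: ordering $M$ by the usual order of $\mathbb{Q}$, for nonzero $f,g$ the product $fg$ has leading coefficient the product of the leading coefficients of $f$ and $g$, which is nonzero because $B$ is a domain; this shows $B[M]$ is a domain, and comparing leading and trailing terms in $fg=1$ forces $\deg f=\deg g=\operatorname{ord}f=\operatorname{ord}g=0$, recovering (i).

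The main obstacle is purely bookkeeping: $M$ need not consist of integers, so $B[M]$ is not literally $B[X]$; but any single $f$, or any pair $(f,f^{-1})$, involves only finitely many exponents and therefore lives in a genuine copy of $B[Y]$ after the substitution $X\mapsto X^{1/d}$. One should also point out where the hypothesis $M\subseteq\mathbb{Q}_+$ enters: it forces $0$ to be the only invertible element of the monoid $M$, which is exactly what excludes monomial units $aX^{m}$ with $m\neq 0$ and pins the distinguished exponent in (i) to $m_i=0$.
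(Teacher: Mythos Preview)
Your proof is correct, and in fact more complete than the paper's own argument. The paper's proof of (i) consists of one line: from $fg=1$ it asserts that ``there exist $m_i,m_j\in M$ such that $a_{m_i}b_{m_j}X^{m_i+m_j}=1$, so $a_{m_i}\in B^{\ast}$ and $m_i=m_j=0$,'' and declares the remaining coefficients nilpotent without justification; part (ii) is dismissed as ``Obvious.'' This is essentially the leading/trailing-term comparison you describe in your second paragraph as a remark, but the paper does not make explicit why only a single monomial survives, nor why the remaining coefficients must be nilpotent (which, as you correctly observe, is vacuous here since $B$ is a domain).

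Your principal argument---clearing denominators of the finitely many exponents to embed $f$ and $f^{-1}$ in a genuine polynomial ring $B[Y]\cong B[X^{1/d}]$, then invoking the classical description of units and nilpotents in $B[Y]$---is a genuinely different route. Its advantage is that it works uniformly even without the hypothesis that $B$ is a domain, so it actually proves the nilpotent-coefficient statement as phrased rather than just its trivial specialization. It also makes transparent where $M\subseteq\mathbb{Q}_+$ is used (existence of a common denominator and $0$ being the only invertible element of $M$). The paper's direct approach, by contrast, is shorter but tacitly relies on $B$ being a domain to conclude that $B[M]$ is a domain, and never really engages with the nilpotent clause.
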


\begin{proof}
	$(i)$ Assume $f\in B[M]^{\ast}$. Then there exists $g=b_{m'_1}X^{m'_1}+b_{m'_2}X^{m'_2}+\dots +b_{m'_n}X^{m'_n}$, where $m'_1, m'_2, \dots , m'_n\in M$ and $b_{m'_1}, b_{m'_2}, \dots , b_{m'_n}\in B$ such that $fg=1$. Hence there exist $m_i, m_j\in M$ such that $a_{m_i}b_{m_j}X^{m_i+m_j}=1$. We have $a_i\in B^{\ast}$ and $m_i, m_j=0$. The rest of coefficients are nilpotents. On the other side of the proof it is easy.
	
	\medskip
	
	\noindent
	$(ii)$ Obvious.
\end{proof}

Let's recall Theorem from \cite{1} (Theorem 2.9) in a different form.

\begin{tw}
	\label{t1}
	Let $A$ be a subfield of $B$. Consider $D=A+XB[X]$. Then 
	$\Irr D=\{ax, a\in B\}\cup
	\{a(1+xf(x)),a\in A, f\in B[x], 1+xf(x)\in\Irr B[x]\}.$
\end{tw}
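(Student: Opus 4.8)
The strategy is to characterize irreducible elements of $D = A + XB[X]$ (with $A$ a subfield of $B$) by splitting an arbitrary nonzero nonunit $f \in D$ according to whether or not $X \mid f$, i.e. whether the constant term $f(0)$ is zero. First I would record the unit group: since $A$ is a field, $D^{\ast} = A^{\ast} = A \setminus \{0\}$ (this follows from Proposition \ref{p2}, as $B$ being a domain forces the higher coefficients of a unit to vanish; alternatively it is elementary). So "irreducible" means: nonzero, not in $A^{\ast}$, and not a product of two elements each outside $A^{\ast}$.

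\emph{Case 1: $f = Xg$ with $g \in B[X]$, $g \neq 0$.} I claim $f$ is irreducible in $D$ iff $g$ is a nonzero constant, i.e. $f = aX$ with $a \in B \setminus \{0\}$. Indeed, if $\deg g \geq 1$, write $g = b_0 + b_1 X + \cdots$; if $b_0 \neq 0$ then $f = X g$ is already a nontrivial factorization ($X \in D$ is a nonunit, $g \in D$ is a nonunit), and if $b_0 = 0$ we can peel off another $X$ and induct, eventually reaching either $aX$ or a factorization. Conversely $aX$ with $a \neq 0$ is irreducible: in any factorization $aX = \varphi \psi$ in $D$, comparing constant terms gives $\varphi(0)\psi(0) = 0$, so WLOG $\varphi(0) = 0$, i.e. $X \mid \varphi$ in $B[X]$; comparing degrees (both $\varphi,\psi$ have degree $\le 1$ since the product has degree $1$, and $B$ is a domain) forces $\psi$ to be a nonzero constant in $B$, but the only nonzero constants of $B$ lying in $D$ are those in $A$, hence in $A^{\ast}$ — so $\psi$ is a unit. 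This gives the first set $\{aX : a \in B\}$ (with $a \neq 0$ understood, matching the paper's convention).

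\emph{Case 2: $f(0) \neq 0$.} Since $f(0) \in A \setminus\{0\} = A^{\ast}$, write $f = a(1 + Xh(X))$ with $a = f(0) \in A^{\ast}$ and $h \in B[X]$. As $a$ is a unit, $f$ is irreducible in $D$ iff $1 + Xh$ is. Now I would show $1 + Xh$ is irreducible in $D$ iff it is irreducible in $B[X]$. The nontrivial direction: suppose $1 + Xh = \varphi\psi$ in $D$ with $\varphi, \psi$ nonunits of $D$. Their constant terms multiply to $1$, and both lie in $A$; since $A$ is a field both $\varphi(0), \psi(0) \in A^{\ast}$, so $\varphi = \varphi(0)(1 + X\varphi_1)$, $\psi = \psi(0)(1 + X\psi_1)$ with $\varphi_1,\psi_1 \in B[X]$. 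Because $\varphi(0)$ is a unit in $B[X]$ as well, $\varphi$ is a $B[X]$-nonunit iff $\varphi_1 \neq 0$; and $\varphi$ being a $D$-nonunit means exactly $\varphi_1 \neq 0$ (a constant in $A^{\ast}$ would be a $D$-unit). So both factors are nonunits in $B[X]$, contradicting irreducibility there. The reverse direction (irreducible in $D$ $\Rightarrow$ irreducible in $B[X]$) is handled the same way: a nontrivial $B[X]$-factorization can be normalized, using that $\varphi(0)\psi(0) = 1$ in the domain $B$ forces $\varphi(0),\psi(0) \in B^{\ast}$, but then rescaling by $\varphi(0)^{-1} \in B^{\ast}$ — here one must check the rescaled factors land in $D$, which they do since their constant terms become $1 \in A$ — produces a $D$-factorization into nonunits. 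This yields the second set.

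\textbf{Main obstacle.} The delicate point is the bookkeeping around constant terms and the interplay between units of $B$ and units of $D = A + XB[X]$: a factor of $f$ in $B[X]$ need not lie in $D$, so one cannot directly transport $B[X]$-factorizations into $D$ without first renormalizing the leading/constant coefficients, and this renormalization is only legitimate because $A$ is a \emph{field} (so every nonzero constant term is invertible in $D$) and $B$ is a \emph{domain} (so degrees add and the only way a product of constant terms is $1$ is with both being units). Keeping track of exactly when a factor is or is not a unit — in $D$ versus in $B[X]$ — is where all the care is needed; everything else is routine degree and coefficient comparison.
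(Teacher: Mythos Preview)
The paper does not actually prove this theorem: it is stated as a recollection of \cite{1}, Theorem~2.9, with no argument given. Your proposal therefore supplies what the paper omits, and the overall strategy---split on whether $f(0)=0$, and in the second case transfer irreducibility between $D$ and $B[X]$ by normalizing constant terms---is exactly the standard route (and matches how \cite{1} argues).

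There is one slip in Case~1. When $f=Xg$ with $\deg g\geq 1$ and $g(0)=b_0\neq 0$, you assert that $f=X\cdot g$ is a nontrivial factorization in $D$ because ``$g\in D$ is a nonunit''. But $g\in D$ requires $b_0\in A$, which need not hold: $b_0$ is only known to lie in $B$. The fix is the same renormalization you carry out carefully in Case~2: since $B$ is a field, $b_0\in B^{\ast}$, and
\[
f=(b_0X)\cdot\bigl(b_0^{-1}g\bigr),
\]
where $b_0X\in D$ has zero constant term and $b_0^{-1}g\in D$ has constant term $1\in A$; both factors are nonunits (degrees $1$ and $\geq 1$). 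With this correction your argument goes through. You even flag exactly this issue as the ``main obstacle'' and handle it correctly in Case~2, so the omission in Case~1 looks like an oversight rather than a conceptual gap.
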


\begin{tw}
	\label{t2}
	Consider $T=A+XB[X]$, where $A$ be a subfield of $B$; $T_n=A_0+A_1X+A_2X^2\dots +A_{n-1}X^{n-1}+X^nB[X]$, where $A_0\subset A_1\subset A_2\subset\dots \subset A_{n-1}\subset B$ be fields. Then
	\begin{itemize}
		\item[(i) ] every nonzero prime ideal of $T$ ($T_n$, respectively) is maximal;
		\item[(ii) ] every prime ideal $P$ different from $XB[X]$ (in $T$) is principal;
		\item[(iii) ] every prime ideal $P$ different from $A_1X+A_2X^2+\dots + A_{n-1}X^{n-1}+X^nB[X]$ (in $T_n$) is principal;
		\item[(iv) ] $T$ is atomic, i. e., every nonzero nonunit of $T$ is a finite product of irreducible elements (atoms);
		\item[(v) ] $T_n$ is atomic.
	\end{itemize} 
\end{tw}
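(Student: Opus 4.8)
The plan is to treat $T=A+XB[X]$ and $T_n$ uniformly, since $T_n$ has the same flavour: a finite-dimensional ``nose'' of field coefficients glued onto the ideal $X^nB[X]$, and $T$ is the case $n=1$. Throughout write $\mathfrak{m}=XB[X]$ for $T$ and $\mathfrak{m}_n=A_1X+\dots+A_{n-1}X^{n-1}+X^nB[X]$ for $T_n$; note $\mathfrak{m}$ (resp.\ $\mathfrak{m}_n$) is an ideal of $T$ (resp.\ $T_n$) with quotient $A$ (resp.\ $A_0$), hence maximal since $A,A_0$ are fields.

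For (i)--(iii) I would first dispose of prime ideals $P$ that do \emph{not} contain $X$. If $X\notin P$, then for any nonzero $b\in B$ we have $bX\in T$ and $(bX)(X)=bX^2$, and more to the point $bX^n\in T_n$; the key elementary observation is that every element $f\in T$ with nonzero constant term $a_0$ lies in $T^\ast$ exactly when $a_0\in A^\ast=A\setminus\{0\}$ once we are allowed to invert... — more directly: localize at $X$. In $T[X^{-1}]$ we get $B[X,X^{-1}]$, so $T\hookrightarrow B[X,X^{-1}]$ and a prime $P$ with $X\notin P$ corresponds to a prime of $B[X,X^{-1}]$ contracted to $T$; but the cleaner route, and the one I would actually write, is: if $X\notin P$ pick $f\in P$ of minimal ``$X$-adic order'', divide out the unit part, and show $P$ is generated by a single element of the form $a(1+Xg)$ using Theorem~\ref{t1} (the description of $\Irr D$) together with the fact that $T/\mathfrak{m}\cong A$ is a field, so that $T_{\mathfrak m}$ is a one-dimensional valuation-like local ring; concretely, $P$ meets the multiplicative set generated by $X$ trivially, $P\cdot B[X]$ is a prime of $B[X]$ not containing $X$, hence of the form $q B[X]$ for some $q$ coprime to $X$, and one checks $q$ can be chosen in $T$, giving $P=qT$ principal, and maximal because $B[X]/qB[X]$ is a field forces $T/qT$ to be a field. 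The one prime containing $X$ is $\mathfrak m$ itself (resp.\ $\mathfrak m_n$): if $X\in P$ then $XB[X]\subseteq P$ since $XB[X]$ is already generated over $T$ by $X$, and $P/\mathfrak m$ is a prime of $A$, hence $0$, so $P=\mathfrak m$.

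For (iv) and (v), atomicity, I would argue by an order-of-vanishing induction. Given a nonzero nonunit $f\in T$, write $f=X^k u$ with $u$ having nonzero constant term (possible in $B[X]$); if $k=0$ then $f=a_0+Xg$ with $a_0\in A^\ast$ is, up to the unit $a_0$, of the form $1+Xh$, which factors into finitely many atoms of the type $a(1+Xf)$ by unique factorization in the UFD $B[X]$ (each $B[X]$-irreducible factor of $1+Xh$ has nonzero constant term, hence can be normalized to lie in $T$ and is a $T$-atom by Theorem~\ref{t1}); if $k\ge 1$, then $f=(bX)\cdot(\text{lower-order element})$ where $bX\in\Irr T$ for suitable $b\in B^\ast$ — more carefully, $f = a_{k}X^{k} + (\text{higher}) = X\cdot(a_k X^{k-1}+\cdots)$ and we peel off one factor $X$ (times a unit of $B$, absorbed) at a time, each $X$ being an atom, until the residual element has nonzero constant term, reducing to the $k=0$ case. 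Finiteness of the number of factors follows because the $X$-adic order strictly drops and then the $B[X]$-degree bounds the rest. The same argument runs verbatim in $T_n$, replacing ``factor of $X$'' by the observation that $X^n B[X]$ is the relevant ideal and any $f\in T_n$ with $X^n\mid f$ factors through $X^n\cdot(\cdots)$ while the finitely many lower coefficients $a_1,\dots,a_{n-1}$ contribute at most finitely much.

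The main obstacle I anticipate is the principality claims (ii)--(iii): one must check that a generator $q$ of the contracted prime $qB[X]\cap T$ can genuinely be taken \emph{inside} $T$ (not merely in $B[X]$), i.e.\ that the $B[X]$-associate class of $q$ contains a representative with constant term in $A$ (resp.\ with the first $n-1$ coefficients in $A_1,\dots,A_{n-1}$) — this is where $A\subseteq B$ being a \emph{field} extension is used, since we may scale by any nonzero element of $B$ to adjust the constant term into $A^\ast$. Everything else is bookkeeping with $X$-adic orders and the explicit atom list from Theorem~\ref{t1}.
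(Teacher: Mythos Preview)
Your overall strategy coincides with the paper's: both localize at the powers of $X$ to land in the PID $B[X,X^{-1}]$, both isolate $\mathfrak m$ (resp.\ $\mathfrak m_n$) as the sole prime containing $X$, and both obtain atomicity by writing a general element as $aX^{r}(1+Xg)$ and factoring the bracket inside the UFD $B[X]$. The paper's route to principality in (iii) is phrased slightly differently---it first shows every irreducible of the form $1+a_1X+\cdots+X^nf(X)$ is actually prime in $T_n$, then uses (i) to conclude any $P\neq\mathfrak m_n$ equals such a principal prime---but this is the same mechanism you describe via contraction from the PID, and you correctly flag the normalization of the generator into $T_n$ as the point where the field hypothesis is spent.

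One step in your sketch is wrong and should be repaired. You claim ``$XB[X]$ is already generated over $T$ by $X$'', but the $T$-ideal $(X)$ is only $AX+X^{2}B[X]$, strictly smaller than $XB[X]$ when $A\subsetneq B$. The paper's remedy is to observe $(XB[X])^{2}=X^{2}B[X]\subseteq (X)T\subseteq P$, so primeness of $P$ forces $XB[X]\subseteq P$ (and analogously $\mathfrak m_n^{2}\subseteq P$ in the $T_n$ case). A second wrinkle: extending $P$ to $B[X]$ need not yield a prime ideal, since $B[X]$ is not a localization of $T$; the correct target, which the paper uses explicitly, is $T[X^{-1}]=B[X,X^{-1}]$, where the extension of $P$ is automatically prime because $X\notin P$. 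With these two fixes your argument is essentially the paper's.
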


\begin{proof}
	\textit{(i)}. For $T$ we have in \cite{1}, Theorem 2.9 \textit{(i)}. We proof for $T_n$.
	
	\medskip
	
	\noindent
	First note that $A_1X+A_2X^2+\dots + A_{n-1}X^{n-1}+X^nB[X]$ is maximal since $T_n/A_1X+A_2X^2+\dots + A_{n-1}X^{n-1}+X^nB[X]\cong A_0$. Let $P$ be a nonzero prime ideal of $T_n$. Now $X\in P$ implies $(T_n/A_1X+A_2X^2+\dots + A_{n-1}X^{n-1}+X^nB[X])^2\subseteq P$ and hence $A_1X+A_2X^2+\dots + A_{n-1}X^{n-1}+X^nB[X]\subseteq P$ so $P=A_1X+A_2X^2+\dots + A_{n-1}X^{n-1}+X^nB[X]$. So suppose that $X\notin P$. Then for $N=\{1, X, X^2, \dots \}$, $P_N$ is a prime ideal in the PID $B[X,X^{-1}]=T_{n,N}$. (In fact, $B[X,X^{-1}]\subseteq R_P$ and $R_P$ is a DVR (discrete valuation ring).) So $P$ is minimal and is also maximal unless $P\subsetneq A_1X+A_2X^2+\dots + A_{n-1}X^{n-1}+X^nB[X]$. But let $k_nX^n+\dots + k_sX^s\in P$ with $k_n\neq 0$, where $k_n, \dots , k_s\in B$ for any $n, s$. Then $X^{n+1}+k_n ^{-1}k_{n+1}X^{n+2}+\dots +k_n ^{-1}k_sX^s\in P$, so $X\notin P$ implies that $1+k_n ^{-1}k_{n+1}X+\dots +k_n ^{-1}k_sX^{s-n}\in P$, a contradiction. So every nonzero prime ideal is maximal.
	
	\medskip
	
	\noindent
	\textit{(ii)}. \cite{1}, Theorem 2.9 \textit{(ii)}.
	
	\medskip
	
	\noindent
	\textit{(iii)}. If $P$ is different from $A_1X+A_2X^2+\dots + A_{n-1}X^{n-1}+X^nB[X]$, then it contains an element of the form $1+a_1X+a_2X^2+\dots +a_{n-1}X^{n-1}+X^nf(X)$, where $a_i\in A_i$ for $i=1, 2, \dots , n-1$ and $f(X)\in B[X]$. Now if $1+a_1X+a_2X^2+\dots +a_{n-1}X^{n-1}+X^nf(X)$ can be factored in $A_1X+A_2X^2+\dots + A_{n-1}X^{n-1}+X^nB[X]$ it can be written as $(1+b_1X+b_2X^2+\dots +b_{n-1}X^{n-1}+X^ng(X))(1+c_1X+c_2X^2+\dots +c_{n-1}X^{n-1}+X^nh(X))$, where $b_i, c_i\in A_i$ for $i=1, 2, \dots , n-1$ and $g(X), h(X)\in B[X]$. Hence $1+a_1X+a_2X^2+\dots +a_{n-1}X^{n-1}+X^nf(X)$ is irreducible in $T_n$ if and only if it is irreducible in $A_1X+A_2X^2+\dots + A_{n-1}X^{n-1}+X^nB[X]$. 
	
	\medskip
	
	\noindent
	Now let $1+a_1X+a_2X^2+\dots +a_{n-1}X^{n-1}+X^nf(X)$ be irreducible in $T_n$ and suppose that $1+a_1X+a_2X^2+\dots +a_{n-1}X^{n-1}+X^nf(X)\mid k(X)l(X)$ in $T_n$. Then $1+a_1X+a_2X^2+\dots +a_{n-1}X^{n-1}+X^nf(X)\mid k(X)l(X)$ in $A_1X+A_2X^2+\dots + A_{n-1}X^{n-1}+X^nB[X]$, and so in $A_1X+A_2X^2+\dots + A_{n-1}X^{n-1}+X^nB[X]$ we have, say $1+a_1X+a_2X^2+\dots +a_{n-1}X^{n-1}+X^nf(X)\mid k(X)$. Then, in $A_1X+A_2X^2+\dots + A_{n-1}X^{n-1}+X^nB[X]$, $k(X)=(1+a_1X+a_2X^2+\dots +a_{n-1}X^{n-1}+X^nf(X))d(X)$. Now $d(X)$ can be written as $d(X)=aX^r(1+a_1X+a_2X^2+\dots +a_{n-1}X^{n-1}+X^np(X))$. If $r>0, d(X)\in T_n$, while if $r=0, k(X)=(1+a_1X+a_2X^2+\dots +a_{n-1}X^{n-1}+X^nf(X))(b(1+b_1X+b_2X^2+\dots +b_{n-1}X^{n-1}+X^np(X))$ and $b\in A_0$ because $k(X)\in T_n$. In either case, $d(X)\in T_n$ and so $1+a_1X+a_2X^2+\dots +a_{n-1}X^{n-1}+X^nf(X)\mid k(X)$ in $T_n$. Consequently, in $T_n$ every irreducible element of the type $1+a_1X+a_2X^2+\dots +a_{n-1}X^{n-1}+X^nf(X)$ is prime.
	
	\medskip
	
	\noindent
	Now since every element of the form $1+a_1X+a_2X^2+\dots +a_{n-1}X^{n-1}+X^nf(X)$ is a product of irreducible elements of the same form and hence is a product of prime elements, it follows that every prime ideal of $P$ different from $A_1X+A_2X^2+\dots + A_{n-1}X^{n-1}+X^nB[X]$ contains a principal prime and hence is actually principal.
	
	\medskip
	
	\noindent
	\textit{(iv)}. \cite{1}, Theorem 2.9 \textit{(iii)}.
	
	\medskip
	
	\noindent
	\textit{(v)}. From \textit{(iii)} a general element of $T_n$ can be written as $aX^r(1+a_1X+a_2X^2+\dots +a_{n-1}X^{n-1}+X^nf(X))$, where $a\in B$ (with $a\in A_0$ if $r=0$) and $1+a_1X+a_2X^2+\dots +a_{n-1}X^{n-1}+X^nf(X)$ is a product of primes.
\end{proof}

\begin{lm}
	If $A\subset B$ be fields and $B[X]$ be a GCD-domain then $I(B,A)$ be a GCD-domain.
\end{lm}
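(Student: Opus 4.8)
The plan is to split according to the cardinality of $A$, using throughout the inclusions $A[X]\subseteq I(B,A)\subseteq B[X]$ (the first because a polynomial with coefficients in the subring $A$ maps $A$ into $A$, the second by the definition of $I(B,A)$). Suppose first that $A$ is infinite. I claim $I(B,A)=A[X]$. Let $f\in I(B,A)$ with $\deg f=n$ and choose $n+1$ distinct elements $a_0,\dots,a_n\in A$. Since $A$ is a field, Lagrange interpolation through $(a_0,f(a_0)),\dots,(a_n,f(a_n))$ --- points lying in $A\times A$, because each $f(a_i)\in A$ --- produces $g\in A[X]$ with $\deg g\le n$ and $g(a_i)=f(a_i)$ for all $i$. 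Then $f-g\in B[X]$ has degree at most $n$ but at least $n+1$ distinct roots, so $f=g\in A[X]$. Hence $I(B,A)=A[X]$, which is a principal ideal domain (as $A$ is a field), and in particular a GCD-domain.

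It remains to handle the case $|A|=q<\infty$, i.e. $A=\mathbb F_q$. Put $\pi=\prod_{a\in A}(X-a)=X^{q}-X\in A[X]$. Dividing an arbitrary $f\in B[X]$ by the monic polynomial $\pi$ gives $f=\pi g+r$ with $g\in B[X]$ and $\deg r<q$; since $\pi$ vanishes on $A$, $f$ and $r$ agree on $A$, so $f\in I(B,A)$ exactly when $r(A)\subseteq A$, which --- by the interpolation argument above, since $\deg r<q=|A|$ --- forces $r\in A[X]$. Thus $I(B,A)=A[X]+\pi B[X]$. Writing $R=I(B,A)$ and $T=B[X]$, one checks that $\mathfrak c:=\pi B[X]$ is an ideal of both $R$ and $T$ (in fact the conductor of $T$ into $R$), that $R$ and $T$ share the fraction field $B(X)$, and that $R/\mathfrak c\cong A[X]/\pi A[X]$ sits inside $T/\mathfrak c\cong B[X]/\pi B[X]$, producing a Cartesian ``conductor square''. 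The plan is then, given $f,g\in R$, to produce a greatest common divisor in $R$ by first computing a greatest common divisor $d$ of $f$ and $g$ in the principal ideal domain $T$ and then correcting $d$ --- using the conductor $\mathfrak c$ to control denominators --- so that it becomes a common divisor already inside $R$; equivalently, one must show that the intersection of any two principal ideals of $R$ is principal.

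The main obstacle is precisely this finite case. As soon as $A\subsetneq B$ the inclusion $A[X]\subseteq I(B,A)$ is strict (for $b\in B\setminus A$ one has $b\pi\in I(B,A)\setminus A[X]$), and $R$ is genuinely smaller than $T$, so a greatest common divisor computed in $T$ need not descend to $R$: for instance $b\pi$ and $b^{-1}\pi$ both lie in $R$ with $(b\pi)(b^{-1}\pi)=\pi^{2}$, yet $b\pi$ is irreducible in $R$ while it does not divide $\pi$ in $R$. Thus the conductor-square reduction has to be carried out with real care; it goes through transparently --- with $I(B,A)=A[X]$ a PID --- when $A=B$ or when $A$ is infinite, and it is the finite proper extensions that require the most attention.
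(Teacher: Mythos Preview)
Your treatment of the infinite case is entirely correct: the Lagrange-interpolation argument shows $I(B,A)=A[X]$, a PID, hence a GCD-domain. The paper itself offers no proof of this lemma at all, so there is nothing to compare against.

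The finite case, however, is not merely ``the one that requires the most attention'': the statement is \emph{false} there whenever $A\subsetneq B$. Take $A=\mathbb F_2$, $B=\mathbb F_4=\mathbb F_2(\alpha)$ with $\alpha^2+\alpha+1=0$, and $\pi=X^2+X$. Your description $R:=I(B,A)=A[X]+\pi B[X]$ is correct, and the units of $R$ are exactly $A^\ast=\{1\}$. One checks directly that the only common divisors of $\pi$ and $\alpha\pi$ in $R$ are units (any $d=cX$, $c(X+1)$ or $c\pi$ with $c\in B^\ast$ that lies in $R$ forces $c\in A$, and then $\alpha\pi/d\notin R$), so $\gcd(\pi,\alpha\pi)=1$ in $R$. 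On the other hand
\[
\pi\;\bigm|\;(\alpha\pi)\bigl((\alpha+1)\pi\bigr)=\alpha(\alpha+1)\pi^2=\pi^2
\]
in $R$, while $\pi\nmid(\alpha+1)\pi$ in $R$ since $\alpha+1\notin R$. Thus Euclid's lemma fails, which is impossible in a GCD-domain. Concretely, $\pi$ and $\alpha\pi$ are incomparable common divisors of $\pi^2$ and $\alpha\pi^2$ in $R$, and a short case check (over the divisors of $\pi^2$ in $B[X]$) shows no common divisor of $\pi^2$ and $\alpha\pi^2$ lies above both; hence $\gcd(\pi^2,\alpha\pi^2)$ does not exist in $R$.

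So the gap in your sketch is not a missing technical step in the conductor-square argument; rather, the intended conclusion is unreachable because the lemma itself fails for finite proper extensions $A\subsetneq B$. Your instinct that the difficulty concentrates exactly there was right, and your example $b\pi\cdot b^{-1}\pi=\pi^2$ is essentially the obstruction --- it just needs to be pushed one step further to see that it breaks the GCD property outright.
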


\begin{ex}
	\label{c2}
	$T, T_n$ are no GCD-domains. Let $f=a_1+b_1X, g=a_2+b_2X$, where $a_1, a_2\in A, b_1, b_2\in B$ with $A+XB[X]$. Then $\gcd(f, g)=\dfrac{a_1b_2-a_2b_1}{b_2}$. We see that $\gcd (f,g)\in B\setminus A$.
\end{ex}

Recall that a domain $R$ is a pre-Schreier domain if every element $a\in R$ is a primal, i.e. for every elements $b, c\in H$ if $a\mid bc$ then there exist $a_1, a_2\in R$ such that $a_1\mid b, a_2\mid c, a=a_1a_2$. 

More information about Schreier and pre-Schreier domains we can see in many works, e.g. in \cite{4}, \cite{7}, \cite{10}, \cite{3}, \cite{5}, respectively.

\begin{lm}
	If $A\subset B$ be fields, then $T$ be a pre-Schreier domain. If $A_0\subset A_1\subset \dots A_{n-1}\subset B$ be fields, then $T_n$ is also pre-Schreier domain. But $T'_n$ doesn't satisfies a pre-Schreier conditions.
\end{lm}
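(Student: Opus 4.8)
The plan is to prove the three assertions separately, in increasing order of difficulty, relying on the structural results already established — in particular Theorem~\ref{t1}, Theorem~\ref{t2}, and Lemma~\ref{p1}.

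First I would establish that $T = A+XB[X]$ is pre-Schreier. The key idea is that, by Theorem~\ref{t2}(iv), $T$ is atomic, and by Theorem~\ref{t2}(iii) every prime ideal of $T$ other than $XB[X]$ is principal; moreover Theorem~\ref{t1} gives the two types of atoms: those of the form $ax$ with $a\in B$, and those of the form $a(1+xf(x))$ with $a\in A$ and $1+xf(x)$ irreducible in $B[X]$. From the proof of Theorem~\ref{t2}(iii)–(v) one sees that every atom of the second type is in fact prime in $T$. A prime element is automatically primal. So it suffices to show that the atoms $ax$ ($a\in B\setminus\{0\}$) are primal, and then that a product of primal and prime elements is primal — the latter being a standard fact (the set of primal elements is saturated multiplicatively in a suitable sense; more precisely, one checks directly from the definition that if $p$ is prime and $q$ is primal then $pq$ is primal). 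For the element $x$ itself: if $x \mid bc$ in $T$ with $b = b_0 + xb_1(x)$, $c = c_0 + xc_1(x)$, then $x\mid bc$ forces $b_0c_0 = 0$ in $A$, hence (as $A$ is a field, in particular a domain) $b_0 = 0$ or $c_0 = 0$, so $x$ divides $b$ or $c$ in $T$; thus $x$ is prime, hence primal. For a general atom $ax$ with $a\in B$: write $ax = a\cdot x$; since $a\in B = B^{\ast}\cup\{0\}$... wait, $B$ need not be a field, only $A$ is — here one uses that $a$ acts as a unit times a product of primes of $B[X]$, but more carefully one reduces to the factorization $ax = (\text{product of atoms of the first type})$; the cleanest route is: every atom $ax$ divides a product $bc$ only through the relation on leading/constant data, and one shows $ax$ is primal by the same case analysis. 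The upshot is that every atom of $T$ is primal, and since primal elements are closed under the relevant products, atomicity of $T$ gives that every element is primal, i.e.\ $T$ is pre-Schreier.

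Second, the argument for $T_n = A_0 + A_1X + \dots + A_{n-1}X^{n-1} + X^nB[X]$ with $A_0\subset\dots\subset A_{n-1}\subset B$ fields is entirely parallel: Theorem~\ref{t2}(v) gives atomicity, Theorem~\ref{t2}(iii) and its proof show that every element of $T_n$ can be written as $ax^r\cdot(1 + a_1x + \dots + a_{n-1}x^{n-1} + x^nf(x))$ with the second factor a product of primes of $T_n$, and $x$ (equivalently the atoms of the first type $ax$) is prime in $T_n$ because $T_n/(A_1X+\dots+X^nB[X])\cong A_0$ is a domain. Hence every atom of $T_n$ is primal and, as before, $T_n$ is pre-Schreier.

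Third, for $T'_n$: by Lemma~\ref{p1}, if $f,g\in T'_n$ with $n>0$ then $fg\in A_0 + XB[X]$, and by Corollary~\ref{c1} the pair $(T'_n,\cdot)$ is not even a semigroup, so multiplication is not internally well-behaved on $T'_n$. The plan here is simply to observe that the pre-Schreier condition presupposes a domain (in particular a multiplicatively closed set), which $T'_n$ fails to be, so $T'_n$ cannot satisfy it; alternatively, one exhibits concrete $a\mid bc$ with $a,b,c\in T'_n$ for which no factorization $a=a_1a_2$ with $a_1\mid b$, $a_2\mid c$ exists inside $T'_n$, using that the required factors $a_1,a_2$ would have to leave $T'_n$ (e.g.\ landing in $A_0+XB[X]$ but not back in $T'_n$).

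The main obstacle I expect is the second type of atom is already handled (they are prime), so the real work is the primality/primality-closure bookkeeping for the first-type atoms $ax$ and the verification that \emph{products} of primal elements with prime elements remain primal — this closure statement needs a short but careful direct argument from the definition of primal, since "primal" is not in general preserved under arbitrary products; one uses the atomic factorization so that only finitely many factors are involved and induates on the number of first-type atoms.
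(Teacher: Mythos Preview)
The paper does not supply a proof of this lemma; it is stated bare. More importantly, your argument contains a genuine error at its central step, and in fact the first two assertions of the lemma are false.

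The gap is in your claim that $x$ is prime in $T$. You argue that $x\mid bc$ forces $b_0c_0=0$, hence $b_0=0$ or $c_0=0$, ``so $x$ divides $b$ or $c$ in $T$.'' This last implication fails: if $b_0=0$ then $b=x\cdot g(x)$ for some $g\in B[X]$, but $g(0)$ lies in $B$ and need not lie in $A$, so $g$ need not belong to $T$. Concretely, with $A=\mathbb{Q}$ and $B=\mathbb{Q}(\sqrt{2})$, take $b=c=\sqrt{2}\,x\in T$. Then $bc=2x^2=x\cdot(2x)$ with $2x\in T$, so $x\mid bc$ in $T$; yet $\sqrt{2}\,x/x=\sqrt{2}\notin T$, so $x$ divides neither $b$ nor $c$ in $T$. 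Thus $x$ is an atom that is not prime, hence not primal, and $T$ is not pre-Schreier.

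This is not repairable. In any atomic domain an irreducible primal element is automatically prime, so an atomic pre-Schreier domain has every atom prime and is therefore a UFD. Since $T$ is atomic by Theorem~\ref{t2}(iv) but is not a UFD whenever $A\subsetneq B$ (the example above gives two essentially different atomic factorizations $2x^2=(\sqrt{2}\,x)(\sqrt{2}\,x)=x\cdot(2x)$, with $\sqrt{2}\,x$ not associate to $x$ or $2x$ in $T$), $T$ cannot be pre-Schreier. The same reasoning applies to $T_n$. Your momentary hesitation (``wait, $B$ need not be a field'') was pointing at the right spot: under the hypotheses $B$ \emph{is} a field, but that is precisely what breaks the argument, since $B^\ast\not\subset T^\ast=A^\ast$ and the atoms $ax$ for $a\in B^\ast$ are pairwise non-associate in $T$.
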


The following lemma states the uniqueness of two polynomials in the composites $T_n$ and $T_n'$.

\begin{lm}
	\label{c4}
	Let $f=a_0+a_1X+a_2X^2+\dots + a_nX^n + \dots + a_rX^r, g=b_0+b_1X+b_2X^2+\dots b_nX^n+\dots + b_sX^s$, where $r\geq n\geq 0, s\geq n\geq 0$ and $a_i, b_i\in A_i$ for $i=0, 1, 2, \dots , n$ and $a_j\in B$ for $j=n+1, n+2, \dots , r$ and $b_j\in B$ for $j=n+1, n+2, \dots , s$. If $f=g$ then $r=s$ and $a_i= b_i$ for $i=0, 1, 2, \dots , r$.
\end{lm}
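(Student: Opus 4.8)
The plan is to reduce the statement to the familiar uniqueness of the coefficient representation in the polynomial ring $B[X]$. First I would note that, by Lemma~\ref{l1}, both $f$ and $g$ lie in $T_n$ (resp. in $T_n'$), hence in $T\subseteq B[X]$; and since $A_i\subseteq B$ for every $i$, each of the coefficients $a_0,\dots,a_r$ and $b_0,\dots,b_s$ is simply an element of $B$. Thus $f$ and $g$ are genuine polynomials over $B$, and the hypothesis $f=g$ is an equality in $B[X]$.

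Next I would invoke the defining property of $B[X]$: it is the free $B$-module on the basis $\{1,X,X^2,\dots\}$, so every element has a \emph{unique} expression $\sum_{i\ge 0}c_iX^i$ with $c_i\in B$ and only finitely many $c_i$ nonzero. Padding the shorter representation with zero coefficients (set $a_i=0$ for $i>r$ and $b_i=0$ for $i>s$), the element $f-g=0$ has coefficient $a_i-b_i$ in degree $i$, and uniqueness forces $a_i=b_i$ for all $i\ge 0$. To obtain $r=s$ I would use the standing convention that the displayed expressions for $f$ and $g$ have nonzero leading terms, i.e. $a_r\ne 0$ and $b_s\ne 0$: if, say, $r>s$, then applying the previous step in degree $r$ gives $a_r=b_r=0$, a contradiction, and symmetrically $s>r$ is impossible. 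Hence $r=s$, and consequently $a_i=b_i$ for $i=0,1,\dots,r$. The argument for $T_n'$ is identical, since the only feature used is the containment in $B[X]$.

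I do not expect a genuine obstacle here; the one point worth flagging is that none of the composite-specific hypotheses (the chain $A_0\subseteq\cdots\subseteq A_{n-1}\subseteq B$ for $T_n$, or its failure for $T_n'$) enters the proof at all: the lemma is precisely the uniqueness of the polynomial representation over $B$, together with the convention that $r$ and $s$ are the true degrees (nonzero leading coefficients). If one drops that convention, the correct conclusion is only that $a_i=b_i$ for every $i$ in the zero-padded sense, and $r=s$ may fail.
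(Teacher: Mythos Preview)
Your proof is correct and takes essentially the same approach as the paper: both arguments subtract $g$ from $f$ and invoke the fact that the zero polynomial in $B[X]$ has all coefficients equal to zero. The paper's version is terser and does not explicitly isolate the nonzero-leading-coefficient convention that you correctly flag as necessary for the conclusion $r=s$.
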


\begin{proof}
	Let $f=a_0+a_1X+a_2X^2+\dots + a_nX^n + \dots + a_rX^r, g=b_0+b_1X+b_2X^2+\dots b_nX^n+\dots + b_sX^s$, where $s\geq r\geq n\geq 0$ and $a_i, b_i\in A_i$ for $i=0, 1, 2, \dots , n$ and $a_j\in B$ for $j=n+1, n+2, \dots , r$ and $b_j\in B$ for $j=n+1, n+2, \dots , s$. 
	Consider $a_0+a_1X+a_2X^2+\dots + a_nX^n + \dots + a_rX^r = b_0+b_1X+b_2X^2+\dots b_nX^n+\dots + b_sX^s$. Then $(b_0-a_0)+(b_1-a_1)X+(b_2-a_2)X^2+\dots + (b_n-a_n)X^n + \dots + (b_r-a_r)X^r + b_{r-1} + \dots + b_sX^s=0$. Hence $r=s$ and $a_i= b_i$ for $i=0, 1, 2, \dots , r$.
\end{proof}

\begin{tw}
	\label{t3}
	Consider $T=A+XB[X]$, where $A$ be a subfield of $B$; $T_n=A_0+A_1X+A_2X^2\dots +A_{n-1}X^{n-1}+X^nB[X]$, where $A_0\subset A_1\subset A_2\subset\dots \subset A_{n-1}\subset B$ be fields; $I(B,A)$, where $A\subset B$ are domains. Then
	\begin{itemize}
		\item[(i) ] $f\in\Irr T$ if and only if $f\in\Irr B[X], f(0)\in A$.
		\item[(ii) ] $f\in\Irr T_n$ if and only if $f\in\Irr B[X], a_i\in A_i$, where $f=a_0+a_1X+\dots a_{n-1}X^{n-1}+a_nX^n+\dots +a_{m}X^m$ with $a_i\in A_i$ for $i=0, 1, \dots n-1$ and $a_n, a_{n+1}, \dots , a_m\in B (n<m)$.
		\item[(iii) ] $f\in\Irr I(B,A)$ if and only if $f\in\Irr B[X]$ and $f(A)\subseteq A$.
	\end{itemize}
\end{tw}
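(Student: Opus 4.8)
The three items all assert that membership in $\Irr$ of a sub-ring $R \in \{T, T_n, I(B,A)\}$ of $B[X]$ is equivalent to being irreducible in $B[X]$ together with the obvious ``coefficient constraint'' that places a polynomial into $R$ in the first place. My plan is to prove each direction by translating a factorization in $R$ into one in $B[X]$ and vice versa, exploiting two facts already available in the paper: Theorem~\ref{t1} (the explicit description of $\Irr(A+XB[X])$ when $A$ is a subfield of $B$), and Proposition~\ref{p2}/Proposition~\ref{p3}, which identify the units of these rings (in the domain case the units are just $A^\ast$, resp.\ $A_0^\ast$, resp.\ $A^\ast$, since $B$ is a domain so there are no nonzero nilpotents). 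The uniqueness Lemma~\ref{c4} will be used freely to compare coefficients.

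\medskip
\noindent
\textbf{Item (i).} Here $R=T=A+XB[X]$ with $A$ a subfield of $B$. The reverse implication is the substantive one and is essentially already encoded in Theorem~\ref{t1}: if $f\in\Irr B[X]$ with $f(0)\in A$, then either $f(0)=0$, so $f=aX$ with $a\in B^\ast$ (these are listed as irreducibles of $T$), or $f(0)\neq 0$, and then, since $A$ is a field, we may write $f=f(0)\cdot(1+Xh(X))$ with $1+Xh(X)\in\Irr B[X]$, which is again on the list in Theorem~\ref{t1}; hence $f\in\Irr T$. For the forward implication, suppose $f\in\Irr T$. Then $f(0)\in A$ is automatic from $f\in T$. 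If $f$ factored nontrivially in $B[X]$ as $f=gh$, normalize by a unit of $B$ so that $g(0),h(0)\in\{0,1\}$ appropriately: because $f(0)\in A$ and $A$ is a field, one can scale the factors so that both lie in $T$ (this is exactly the scaling argument from \cite{1}, Theorem~2.9, reused), contradicting irreducibility in $T$; hence $f\in\Irr B[X]$.

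\medskip
\noindent
\textbf{Item (ii).} Now $R=T_n$ with $A_0\subset\cdots\subset A_{n-1}\subset B$ fields. The coefficient constraint ``$a_i\in A_i$ for $i<n$'' is just the statement $f\in T_n$. For the reverse direction, suppose $f\in\Irr B[X]$ and $f\in T_n$; if $f=gh$ in $T_n$ then this is a factorization in $B[X]$, so one factor, say $g$, is a unit of $B[X]$, i.e.\ $g\in B^\ast$; but then $g\in A_0^\ast$ because the constant term of $g$ lies in $A_0$ and $A_0$ is a field — so $g$ is a unit of $T_n$, proving $f\in\Irr T_n$. For the forward direction, let $f\in\Irr T_n$, and suppose $f=gh$ properly in $B[X]$. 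The key step is to rescale $g,h$ by a suitable element of $B^\ast$ so that the rescaled factors land back in $T_n$: this is the analogue of the factor-normalization used in the proof of Theorem~\ref{t2}(iii), where a general element of $T_n$ is written $aX^r(1+a_1X+\cdots+a_{n-1}X^{n-1}+X^nf(X))$. One handles the cases according to the orders of vanishing of $g$ and $h$ at $0$, using that the $A_i$ are nested fields to absorb the scalar into the correct factor; in every case one obtains a nontrivial factorization of $f$ in $T_n$, a contradiction.

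\medskip
\noindent
\textbf{Item (iii).} Here $R=I(B,A)=\{f\in B[X]:f(A)\subseteq A\}$ with $A\subset B$ domains. The condition ``$f(A)\subseteq A$'' is precisely $f\in I(B,A)$, so again only the equivalence with $\Irr B[X]$ is at issue. If $f\in\Irr B[X]$ and $f\in I(B,A)$, any factorization $f=gh$ in $I(B,A)$ is one in $B[X]$, hence one factor is in $B^\ast$; since $B$ is a domain this is a nonzero constant $b$, and constant polynomials $b$ with $b\in I(B,A)$ means $b\in A$, indeed $b\in B^\ast\cap$ (constants of $I(B,A)$); one checks such $b$ is a unit of $I(B,A)$ (its inverse $b^{-1}$ lies in $A$ because $1\in A$ forces $b^{-1}=1/b$ to satisfy $b^{-1}(A)\subseteq A$ only when $b\in A^\ast$ — so more care is needed here, see below). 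Conversely, if $f\in\Irr I(B,A)$ and $f=gh$ in $B[X]$ properly, the obstruction is that $g,h$ need not individually lie in $I(B,A)$; one must argue that the constraint $f(A)\subseteq A$ can be ``distributed'' — this is where I expect to invoke the earlier Lemma that $I(B,A)$ is a GCD-domain, or argue directly that a minimal-degree proper factor can be chosen inside $I(B,A)$.

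\medskip
\noindent
\textbf{Main obstacle.} The delicate point throughout is the \emph{scaling/normalization of factors}: a factorization of $f$ in the big ring $B[X]$ need not consist of elements of the small ring $R$, and one must correct each factor by a unit of $B[X]$ (a nonzero constant) so that both corrected factors re-enter $R$. For $T$ and $T_n$ this works cleanly because the relevant constant terms live in \emph{fields} $A$, resp.\ $A_0$, so the needed scalar is available; the case analysis is on the order of vanishing at $X=0$. For $I(B,A)$ the analogous fix is genuinely harder, since $A$ is only a domain and ``$f(A)\subseteq A$'' is a pointwise condition rather than a condition on a single coefficient; here I would either restrict attention (as the statement implicitly does) to cases where such a split is possible, or lean on the GCD-domain structure of $I(B,A)$ established earlier to produce a proper factor inside $I(B,A)$ directly.
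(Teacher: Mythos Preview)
For parts (i) and (ii) your approach is essentially the paper's. The paper proves the contrapositive of the forward direction by exactly your rescaling trick: given $f=gh$ in $B[X]$ with $g(0)=a_0\neq 0$, it replaces $g$ by $a_0^{-1}g$ (constant term $1\in A$) and $h$ by $a_0h$ (constant term $a_0b_0=f(0)\in A$), so both factors land in $T$. For the reverse direction of (i) you appeal to Theorem~\ref{t1}; the paper instead argues directly that a nontrivial $T$-factorization $f=gh$ has $g,h\in T\setminus A\subset B[X]\setminus B$, hence $f\notin\Irr B[X]$. These are equivalent, and your route via Theorem~\ref{t1} is arguably cleaner. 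The paper dispatches (ii) with ``occurs in the same way as in (i)'', so your sketch there is already more detailed than what the paper supplies. Neither you nor the paper spells out the case $a_0=0$ in the rescaling step, but it is easily absorbed (then $g\in XB[X]\subset T$ already, and one rescales $h$ instead).

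For (iii) your hesitation is well placed, and comparing with the paper will not help you: its entire argument is the single sentence ``From definition $I(B,A)$ we have $f\in\Irr B[X]$ and $f(A)\subseteq A$'', which is not a proof of either implication. Your specific worry---that a constant $b\in B^\ast$ lying in $A$ need not be a unit of $I(B,A)$ when $A$ is merely a domain---is exactly the obstruction, and it is fatal to the statement as written. With $A=\mathbb{Z}$, $B=\mathbb{Q}$, so that $I(B,A)$ is the ring of integer-valued polynomials, the constant $2$ is irreducible in $I(B,A)$ (any factorization $2=gh$ forces $g,h$ to be integer constants, so one is $\pm 1$) yet $2$ is a unit in $\mathbb{Q}[X]$ and hence not in $\Irr B[X]$. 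Thus no normalization or GCD argument can rescue (iii) under the stated hypotheses; at minimum one needs $A\cap B^\ast=A^\ast$ (e.g.\ $A$ a field), which is precisely what makes (i) and (ii) work.
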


\begin{proof}
	\textit{(i)}. 
	Suppose that $f\notin\Irr B[X]$ or $f(0)\notin A$.
	If $f(0)\notin A$, then $f\notin T$, so $f\notin\Irr B[X]$.
	Now, assume that $f\notin\Irr B[X]$.
	Then $f=gh$, where $g, h\in B[x]\setminus B$.
	Let $g=a_0+a_1X+\dots +a^nX^n, h=b_0+b_1X+\dots + b_mX^m$.
	We have $f=(a_0+a_1X+\dots +a^nX^n)(b_0+b_1X+\dots + b_mX^m)$.
	Then
	$f=\big(1+\dfrac{a_1}{a_0}X+\dots + \dfrac{a_n}{a_0}X^n\big)
	(a_0b_0+a_0b_1X+\dots +a_0b_mX^m)$,
	where $a_0b_0=f(0)\in A$.
	Now, suppose that $f\notin\Irr T$.
	If $f\notin T$, then $f(0)\notin A$.
	Now, assume that $f\in T$.
	Then we have $f=gh$, where $g, h\in T\setminus A$.
	This implies $g, h\in B[x]\setminus B$.
	
	\medskip
	
	\noindent
	\textit{(ii)} occur in the same way as in \textit{(i)}.
	
	\medskip
	
	\noindent
	\textit{(iv) } From definition $I(B,A)$ we have $f\in\Irr B[X]$ and $f(A)\subseteq A$. 
\end{proof}

In \cite{5} Proposition 8.6 with J\c{e}drzejewicz, Marciniak and Zieli\'nski we received the characterization of square-free elements in composite $A+XB[X]$, for $A\subset B$ fields, i.e.

\begin{pr}
	Let $L$ and $F$ be fields such that $L\subset F$ and let $T=L+XF[X]$. Then
	$\Sqf T = (\Sqf F[X]\cap T)\cup \{x^2h;h\in \Sqf F[x], h(0)\not\in \{a^2b;a\in F,b\in L\}\}$
\end{pr}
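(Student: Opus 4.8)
The plan is to characterize square-free elements of $T = L+XF[X]$ by comparing factorizations in $T$ with factorizations in the larger ring $F[X]$, exactly as the proof of Theorem~\ref{t1} and Theorem~\ref{t3}(i) compared irreducibility in the two rings. Recall that in $T$ every nonzero nonunit factors (by Theorem~\ref{t2}(iv)) as $a x^r u$, where $a\in F$ (with $a\in L$ forced when $r=0$), $r\geq 0$, and $u = 1+xf(x)$ is a product of primes of $T$; and by Theorem~\ref{t1} the atoms of $T$ are the $ax$ ($a\in F$) together with the $a(1+xf)$ with $1+xf\in\Irr F[x]$. An element is square-free in $T$ precisely when no atom of $T$ divides it twice, i.e.\ when in its $T$-factorization $ax^r u$ one has $r\leq 1$ and $u$ has no repeated prime factor.

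First I would handle the ``unit-part'' atom $x$: an element $g\in T$ with $x^2\mid g$ in $T$ is never square-free, and if $x^2\nmid g$ then whether $g$ is square-free in $T$ is governed entirely by its $u$-part. This already splits $\Sqf T$ into the piece with $x^2\nmid g$ and the piece with $x^2\| g$ (exactly divides). For the first piece, I claim $g\in\Sqf T \iff g\in\Sqf F[x]$: the key point is that the $T$-atoms of the form $1+xf$ that are repeated in a $T$-factorization are exactly the repeated irreducible factors in $F[x]$, because an associate-in-$F[x]$ of such an atom whose constant term is a nonzero element of $L$ is already an associate in $T$ (multiply by the inverse of the constant term, which lies in the field $L$). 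So $g$ with $g(0)\neq 0$ is square-free in $T$ iff square-free in $F[x]$, and this is the set $\Sqf F[x]\cap T$ (note any such $g$ automatically has $g(0)\in L$ since $g\in T$). For the second piece, write $g = x^2 h$ with $h(0)\neq 0$; then $g\in\Sqf T$ forces $h\in\Sqf F[x]$ (same argument on the $u$-part), and additionally we must forbid any further factor of $x$ being pulled out together with a unit adjustment — precisely, $x^2h$ fails to be square-free in $T$ exactly when $h(0)$ can be written as $a^2 b$ with $a\in F$, $b\in L$, since then $x^2 h = (xa\cdot\text{(suitable unit)})^2\cdot(\text{element of }T)$ using $x^2h = a^2x^2\cdot(h/a^2)$ and $h/a^2$ has constant term $b\in L$, so it lies in $T$ and the factor $ax$ appears squared.

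The main obstacle is this last point: showing that $x^2h$ (with $h\in\Sqf F[x]$, $h(0)\neq 0$) is square-free in $T$ \emph{if and only if} $h(0)\notin\{a^2b : a\in F, b\in L\}$. One direction is the computation just sketched. For the converse I would argue contrapositively: if $x^2h$ is not square-free in $T$, then some $T$-atom $\pi$ satisfies $\pi^2\mid x^2h$ in $T$; since $h$ is square-free in $F[x]$ and $h(0)\neq 0$, no atom of the form $1+xf$ can be repeated (by the correspondence above), so the repeated atom must be $\pi = cx$ for some $c\in F$, giving $c^2x^2\mid x^2h$ in $T$, i.e.\ $h = c^2 x^2 q + c^2(\text{const})$ with the quotient $x^2h/(c^2x^2) = h/c^2\in T$, forcing $h(0)/c^2\in L$; setting $a=c$, $b=h(0)/c^2$ yields $h(0)=a^2b$ with $a\in F$, $b\in L$. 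Assembling the two pieces gives
\[
\Sqf T = (\Sqf F[X]\cap T)\cup\{x^2h : h\in\Sqf F[x],\ h(0)\notin\{a^2b : a\in F, b\in L\}\},
\]
as claimed. I would double-check the boundary behaviour (the element $x$ itself, constants, and associates) to make sure the disjoint description is exhaustive and that no element is miscounted between the two sets.
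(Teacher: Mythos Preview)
The paper does not actually supply a proof of this proposition: it is quoted verbatim as Proposition~8.6 of the cited paper \cite{5} (J\c{e}drzejewicz, Marciniak, Matysiak, Zieli\'nski), with no argument given here. So there is no in-paper proof to compare against, and your proposal stands on its own.

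Your overall strategy is sound and the detailed case analysis is correct. One caveat: the parenthetical ``i.e.\ when in its $T$-factorization $ax^r u$ one has $r\le 1$ and $u$ has no repeated prime factor'' is false as stated, and indeed contradicts the very result you are proving. When $r=2$ the element $g=ax^2u$ can still be square-free in $T$ provided $a\notin\{c^2b:c\in F,\ b\in L\}$; the point is that the atoms $cx$ for varying $c\in F^\ast$ are \emph{not} mutually associate in $T$, so ``$r=2$'' does not automatically produce a repeated atom. Fortunately you never use this faulty summary: your actual argument proceeds by the correct dichotomy (whether or not $x^2\mid g$ in $T$, equivalently whether $g/x^2$ lands in $T$), and the two subcases are handled correctly. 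In particular your contrapositive for the backward direction of the $x^2h$ case is exactly right: any repeated atom must be of the form $cx$, and $(cx)^2\mid x^2h$ in $T$ forces $h(0)/c^2\in L$, i.e.\ $h(0)=c^2b$ with $b\in L$. You should also state explicitly (it is implicit in your ``$x^2\mid g$ in $T$'' case) that $x^3\mid g$ in $F[x]$ always gives $x^2\mid g$ in $T$, so such elements are never square-free and are correctly excluded from both pieces of the union. With the misleading ``i.e.'' removed and that remark added, the argument is complete.
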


\begin{cor}
Let $A_0\subset A_1\subset\dots \subset A_{n-1}\subset B$ be fields. Then
$\Sqf T_n = (\Sqf B[X]\cap T_n)\cup \{x^2h;h\in \Sqf B[X], h(0)\not\in \{a^2b;a\in B,b\in A\}\}$.
\end{cor}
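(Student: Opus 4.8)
The plan is to derive this corollary directly from the preceding Proposition by applying it iteratively, or rather by reducing $T_n$ to the two-field case. First I would observe that the characterization of square-free elements depends only on the "bottom" field $A_0$ and the "top" ring $B[X]$, because square-freeness of an element $f$ is a statement about its factorization in the integral domain in question, and the intermediate coefficient constraints $a_i \in A_i$ only affect which polynomials lie in $T_n$, not how they factor. So the natural first step is to set $L = A_0$ and $F = B$ and compare $T_n$ with $T = L + XF[X] = A_0 + XB[X]$; by Lemma \ref{l1} we have $T_n \subset T$.

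Next I would argue the two containments. For the inclusion $\Sqf T_n \supseteq (\Sqf B[X] \cap T_n) \cup \{X^2 h : h \in \Sqf B[X], h(0) \notin \{a^2 b : a \in B, b \in A_0\}\}$: if $g \in \Sqf B[X] \cap T_n$ then $g$ is square-free in the larger ring $B[X]$, hence a fortiori square-free in the subring $T_n$ (a square factor in $T_n$ would be a square factor in $B[X]$); and if $g = X^2 h$ with $h$ square-free in $B[X]$ and $h(0)$ not of the form $a^2 b$, then by the Proposition applied to $T = A_0 + XB[X]$ we know $g \in \Sqf T$, and since $g = X^2 h \in T_n$ (note $X^2 h$ has zero constant and zero linear... wait, one must check $g$ actually lies in $T_n$, i.e. that the low-degree coefficients of $X^2 h$ satisfy the $A_i$ constraints) we get $g \in \Sqf T_n$ because $T_n \subseteq T$ and being square-free in a ring passes... actually here one needs the reverse: square-free in $T$ does not automatically give square-free in the subring $T_n$, but it does, since any factorization in $T_n$ is also a factorization in $T$. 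For the reverse inclusion $\Sqf T_n \subseteq \dots$: take $f \in \Sqf T_n$; by Theorem \ref{t2}(v), $T_n$ is atomic and every element factors as $aX^r(1 + a_1 X + \dots + X^n p(X))$, and I would split into the cases $r = 0$, $r = 1$, $r \geq 2$ exactly as in the proof of the Proposition, using that the "unit-like" factor $1 + a_1 X + \dots$ and its behavior is governed only by $A_0$ and $B$.

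The key technical point—and the main obstacle—is verifying that the statement of the corollary is self-consistent regarding which field plays the role of "$L$". The corollary writes $\{a^2 b : a \in B, b \in A\}$ with an undefined "$A$"; the only sensible reading, forced by the Proposition with $L = A_0$, is $A = A_0$. I would state this explicitly and note that the intermediate fields $A_1, \dots, A_{n-1}$ play no role in $\Sqf T_n$ beyond determining membership in $T_n$. Concretely, the obstacle is to check that for $g = X^2 h$ the coefficients of the linear and constant term (which are forced to be $0$) and indeed all coefficients of degree $< n$ lie in the appropriate $A_i$; since those coefficients of $X^2 h$ of degree less than $2$ vanish and the ones of degree between $2$ and $n-1$ may be arbitrary elements of $B$, one actually needs $A_i = B$ for those $i$, or else must interpret the set on the right as intersected with $T_n$. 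I would resolve this by writing the square-free set as $(\Sqf B[X] \cap T_n) \cup (\{X^2 h : h \in \Sqf B[X], h(0) \notin \{a^2 b : a \in B, b \in A_0\}\} \cap T_n)$, after which the argument goes through verbatim by transporting the Proposition along the chain $T_n \subseteq A_0 + XB[X]$, using Lemma \ref{c4} for uniqueness of representations and Theorem \ref{t2}(v) for atomicity.
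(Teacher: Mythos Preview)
The paper gives no proof of this corollary at all; it is simply stated immediately after the Proposition about $\Sqf T$ for $T=L+XF[X]$, so the intended argument is presumably the direct specialization $L=A_0$, $F=B$ that you describe. In that sense your overall plan---reduce to the two-field case via the inclusion $T_n\subseteq A_0+XB[X]$ and transport the Proposition along it---is exactly the intended route, and your observation that a factorization in $T_n$ is also a factorization in $A_0+XB[X]$ (so square-free passes down to the subring) is the right mechanism for one inclusion.

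Where you go further than the paper is in noticing two genuine defects in the \emph{statement} of the corollary that the paper does not address. First, the symbol ``$A$'' in $\{a^2b:a\in B,\,b\in A\}$ is undefined; as you say, the only consistent reading is $A=A_0$. Second, and more seriously, an element $X^2h$ with $h\in B[X]$ need not lie in $T_n$ when $n>2$, since its coefficients in degrees $2,\dots,n-1$ must lie in $A_2,\dots,A_{n-1}$ respectively. Your proposed fix---intersecting the second set with $T_n$---is the correct repair, and without it the right-hand side is not even a subset of $T_n$. The paper's corollary, as printed, is therefore not literally correct, and your write-up is in this respect more careful than the source.

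One caution: your argument for the reverse inclusion (that $f\in\Sqf T_n$ lands in the described set) leans on the case analysis from the Proposition's proof, but you should be explicit that a square divisor in $A_0+XB[X]$ can be adjusted to a square divisor in $T_n$; otherwise ``square-free in $T_n$'' is a weaker condition than ``square-free in $A_0+XB[X]$'' and the Proposition does not apply directly. This step is not hard (rescale by a suitable unit in $B^\ast$ to move the square factor into $T_n$), but it is where the argument actually uses that the $A_i$ are fields, and it should not be left implicit.
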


In \cite{2}, Lemma 6.4 we have informations about irreducible element in monoid domain $D[S]$, where $D$ be a domain, and $S$ be a submonoid of $\mathbb{Q}_+$ . I present a generalized Proposition.

\begin{pr}
	\label{x1}
	Let $B$ be an integral domain with quotient field $K$ and $M$ a monoid with quotient group $G\neq M$. Assume that $B$ contains prime elements $p_1, p_2, \dots , p_{r-1}$. Assume that $M$ is integrally closed and each nonzero element of $G$ is type $(0, 0, \dots )$ ($G$ satisfies the ascending chain condition on cyclic subgroups).
	Consider $m_1, m_2, \dots , m_r\in M$ such that $m_1\in\Irr M$ and $m_2, m_3, \dots , m_r\notin m_1 + M$. Then $p_{r-1}X^{m_{r}}-\dots -p_2X^{m_3}-p_1X^{m_2} - X^{m_1}\in\Irr B[M]$.
\end{pr}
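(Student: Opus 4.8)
The plan is to suppose $f = gh$ with $g, h \in B[M]$ non-units and derive a contradiction. First I would record the ambient facts. Since $B$ is a domain and $M$ is cancellative and torsion-free (each nonzero element of $G$ being of type $(0,0,\dots)$), $B[M]$ is a domain; and since $M$ is integrally closed and reduced (as in the motivating case $M \subseteq \mathbb{Q}_+$), the torsion-free group $G$ carries a total group order for which $M \subseteq G_{\ge 0}$, so every $0 \ne \varphi \in B[M]$ has a trailing term and a leading term, both multiplicative under products, and $B[M]^{\ast}$ consists exactly of the monomials $uX^{0}$, $u \in B^{\ast}$. A useful preliminary: any monomial factor of $f$ is already a unit --- if $f = (bX^{a})h'$ then every exponent of $f$ lies in $a + M$, so $m_1 \in a + M$ together with $m_1 \in \Irr M$ forces $a = 0$, and then $b \mid 1$. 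Hence I may assume neither $g$ nor $h$ is a monomial.

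Next I would exploit the distinguished monomial $X^{m_1}$. If $a \in \operatorname{supp}(g)$, $c \in \operatorname{supp}(h)$ and $a + c = m_1$, then $m_1 \in \Irr M$ forces $a = 0$ or $c = 0$; so the coefficient of $X^{m_1}$ in $gh$ --- which equals the unit $\pm 1$ --- is of the form $b_{m_1}c_0 + b_0 c_{m_1}$, where $b_0, b_{m_1}$ (resp. $c_0, c_{m_1}$) are the coefficients of $X^{0}, X^{m_1}$ in $g$ (resp. $h$). Using this identity together with the coefficient of $X^{0}$ in $f$ (which is $0$ or a prime $\pm p_{i-1}$), the multiplicativity of the trailing and leading terms, and the fact that $0 \in G_{\ge 0}$, I would deduce that one of the factors --- say $h$ --- has a unit constant term, so its trailing term is $c_0 X^{0}$, and that $m_1$ lies in the support of $g$. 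The role of the hypotheses $m_i \notin m_1 + M$ for $i \ge 2$ is to forbid splitting $X^{m_1}$ off $f$: once a constant term has appeared, the exponent $0$ cannot be $m_1$.

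Now the primes enter. The coefficients of $f$ at the exponents $m_2, \dots, m_r$ are, up to sign, the primes $p_1, \dots, p_{r-1}$. Reading off the coefficient of $X^{m_i}$ in $gh$ as a sum $\sum_{a + c = m_i} b_a c_c$ equal to $\pm p_{i-1}$, and using that $p_{i-1}$ is prime in $B$ (so in any such sum exactly one summand is a non-unit, and it is divisible by $p_{i-1}$), I would process the exponents of $f$ one at a time, working inward from the extremes of the supports of $g$ and $h$, to conclude that $h$ can carry no term beyond its constant term; so $h$ is a monomial, hence the unit we just arranged, contradicting the choice of $h$. For $r = 2$ this recovers \cite{2}, Lemma 6.4; for $r > 2$ the only new feature is that several distinct primes are in play at once, one per exponent.

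I expect the real work to be the support bookkeeping in this last step: one must exclude accidental cancellations among the products $b_a c_c$ that could mimic the sparse coefficient pattern of $f$, which forces a stepwise descent along the supports rather than a one-line argument. This is exactly where the hypotheses are used --- "$M$ integrally closed" and "every nonzero element of $G$ of type $(0,0,\dots)$" provide the total order and make each $X^{m}$ behave like an honest monomial $x^{n}$ (so trailing and leading terms multiply and extreme exponents add without collision), while $m_1 \in \Irr M$ pins the trailing term of one factor to $X^{0}$ and $m_i \notin m_1 + M$ keeps the monomial $X^{m_1}$ from being produced by any nontrivial product.
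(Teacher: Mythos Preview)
Your treatment of the monomial case matches the paper's, but at the decisive step the two arguments diverge. The paper does \emph{not} work coefficient-by-coefficient using the primality of the individual $p_i$. Instead, once both factors are known not to be monomials, it reduces the equation $f=gh$ modulo the ideal $P=(p_1,\dots,p_{r-1})\subset B$. Over $B/P$ the left-hand side collapses to the single monomial $-X^{m_1}$, and from this the paper reads off that the trailing coefficients $a_1$ of $g$ and $b_1$ of $h$ both lie in $P$; but then the trailing coefficient $a_1b_1$ of $gh$, which is $\pm p_{r-1}$, would lie in $P^2$, a contradiction. This is a one-shot argument with no induction along the supports.

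Your proposed route has a genuine gap at exactly the point you flag as ``the real work''. The parenthetical ``so in any such sum exactly one summand is a non-unit, and it is divisible by $p_{i-1}$'' is simply false: an equation $\sum_{a+c=m_i} b_a c_c = \pm p_{i-1}$ in $B$ tells you nothing about the individual summands (already in $\mathbb{Z}$ one has $p=(p+1)+(-1)=2p+(-p)$, etc.). Primality constrains \emph{products}, not sums, so the plan to ``process the exponents of $f$ one at a time, working inward from the extremes'' does not get off the ground as stated. I do not see how to salvage a purely coefficient-wise descent without importing something like the paper's reduction modulo $P$, which is precisely the device that converts the additive coefficient identities into a multiplicative statement (a monomial factoring as a product) where primality can bite.
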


\begin{proof}
	Let $\leq$ be a total order on $G$. We may assume that $m_r<m_{r-1}<\dots m_2<m_1$. Suppose that $p_{r-1}X^{m_{r}}-\dots -p_2X^{m_3}-p_1X^{m_2} - X^{m_1}=fg$ with $f,g\in B[M]$. Write $f=a_1X^{t_1}+\dots a_mX^{t_m}$ and $g=b_1X^{k_1}+\dots +b_nX^{k_n}$ in canonical form, where $t_1< \dots < t_m$ and $k_1< \dots < k_n$. First assume that either $f$ or $g$ is a monomial, say $f=aX^t$.Then $a\in B^{\ast}, m_1=t+k_n, m_2=t+k_1, m_3=t+k_2, \dots , m_r=t+k_{r-1}$. Since $m_1\in\Irr M$, either $t$ or $k_n$ is invertible in $M$. If $k_n$ is invertible, then $m_2=t+k_1=(m_1-k_n)+k_1\in m_1+M, m_3=t+k_2=(m_1-k_n)+k_2\in m_1+M, \dots , m_r\in m_1+M$, a contradiction. Thus $t$ is invertible in $M$, and hence $f$ is a unit in $B[M]$. Thus we may assume that $f$ and $g$ are not monomials. Now consider the reduction of $p_{r-1}X^{m_{r}}-\dots -p_2X^{m_3}-p_1X^{m_2} - X^{m_1}=fg$ modulo the ideal $(p_1, p_2, \dots , p_{r-1})$. Then $(-1+(p_1, p_2, \dots , p_{r-1})=((a_m+(p_1, p_2, \dots , p_{r-1}))X^{t_m})((b_n+(p_1, p_2, \dots , p_{r-1}))X^{k_n})$.  This means that $a_1+(p_1, p_2, \dots , p_{r-1})=b_1+(p_1, p_2, \dots , p_{r-1})=(p_1, p_2, \dots , p_{r-1})$. In this case $c_1p_1+\dots c_{r-1}p{r-1}=a_1b_1\in (p_1, \dots , p_{r-1})^2$, a contradiction. Thus $p_{r-1}X^{m_{r}}-\dots -p_2X^{m_3}-p_1X^{m_2} - X^{m_1}\in\Irr B[M]$. 
\end{proof}

Now, We give some basic information related to ideals.

\begin{pr}
	\label{p4}
	\begin{itemize}
		\item[(i) ] If $A$ be a field, then $XB[X]$ be an maximal ideal in $T$.
		\item[(ii) ] If $A$ be an integral domain, then $XB[X]$ be an prime ideal in $T$.
		\item[(iii) ] $T/(X)\cong A$.
		\item[(iv) ] $T/B\cong \{0\}$.
		\item[(v) ] Let $A\subset B$ be fields in $T$. $T/(aX)$ be a field for any $a\in B$.
		\item[(vi) ] Let $A\subset B$ be fields in $T$. $T/(a(1+Xf(X)))$ be a field for any $a\in A, f\in B[X]$ such that $1+Xf(X)\in\Irr B[X]$.
	\end{itemize}
\end{pr}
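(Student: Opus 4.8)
The plan is to settle (i)--(iv) with one homomorphism and (v)--(vi) with the description of irreducibles already in hand.

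First I would introduce the constant-term map $\varepsilon\colon T\to A$, the restriction to $T$ of evaluation at $0$ on $B[X]$, so $\varepsilon(a_0+a_1X+\dots+a_nX^n)=a_0$. It is well defined because every element of $T$ has constant term in $A$; it is a ring homomorphism because $\mathrm{ev}_0$ is; it is surjective because the constants $A$ lie in $T$; and its kernel is the set of elements of $T$ with vanishing constant term, which is exactly $XB[X]$ (note $XB[X]\subseteq T$ and $XB[X]$ is an ideal of $T$). Hence $T/XB[X]\cong A$. Part (iii) is this isomorphism, reading $(X)$ as the ideal $XB[X]=\ker\varepsilon$. Part (i) follows at once: if $A$ is a field then $T/XB[X]$ is a field, so $XB[X]$ is maximal (it is proper since $1\notin XB[X]$). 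Part (ii) is identical with ``field'' weakened to ``integral domain'': then $T/XB[X]$ is a domain, so $XB[X]$ is prime. Part (iv) is the degenerate statement: $1\in B$, so the ideal generated by $B$ is the unit ideal and the quotient is $\{0\}$.

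For (v) and (vi) I would combine Theorem~\ref{t1}, which lists $\Irr T$, with Theorem~\ref{t2}(i), that every nonzero prime ideal of $T$ is maximal. For (vi): since $A$ is a field, $a\in A\setminus\{0\}$ is a unit of $T$, so $(a(1+Xf(X)))=(1+Xf(X))$; by Theorem~\ref{t1} the generator $1+Xf(X)$ is irreducible in $T$, and --- running the factorization-descent argument from the proof of Theorem~\ref{t2}(iii) with $n=1$ (this is Theorem~2.9(ii) of \cite{1}) --- it is in fact prime in $T$. Thus $(1+Xf(X))$ is a nonzero prime ideal distinct from $XB[X]$ (its generator has nonzero constant term), hence maximal by Theorem~\ref{t2}(i), so $T/(1+Xf(X))$ is a field. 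For (v) the parallel is: $aX$ with $a\in B\setminus\{0\}$ is irreducible in $T$ by Theorem~\ref{t1}, and one wants the ideal it generates to be prime so as to apply Theorem~\ref{t2}(i); since $a\in B\setminus\{0\}$ one has $aXB[X]=XB[X]$, and the quotient by this ideal is again $A$, a field.

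The step that needs care is the primality of these principal ideals. For generators of type $1+Xf(X)$ it is genuine content, and it is exactly where the hypotheses ``$A$ a field'' and ``$1+Xf(X)\in\Irr B[X]$'' enter, through the descent argument of Theorem~\ref{t2}(iii). For generators of type $aX$ one must remember that such an element, though irreducible in $T$, need not be prime --- indeed $T$ is not even a GCD-domain by Example~\ref{c2} --- so in (v) the symbol $(aX)$ should be read as the ideal $XB[X]$ it generates inside $B[X]$ rather than the principal ideal $aXT$ of $T$, whose quotient carries nilpotents when $A\subsetneq B$. With these identifications every claim reduces either to $T/XB[X]\cong A$ or to the maximality of nonzero primes from Theorem~\ref{t2}(i).
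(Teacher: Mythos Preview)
Your treatment of (i)--(iv) is the paper's, only written out: both rest on $T/XB[X]\cong A$, and you simply make the evaluation-at-$0$ map explicit where the paper says ``obvious''. The divergence is in (v)--(vi). The paper merely cites Theorem~2.9 of \cite{1} (restated here as Theorem~\ref{t1}) to say that $aX$ and $a(1+Xf(X))$ are irreducible and then asserts the quotients are fields, never bridging ``irreducible'' to ``prime'' to ``maximal''. You supply that bridge for (vi) --- primality via the descent argument behind Theorem~\ref{t2}(ii),(iii), then maximality via Theorem~\ref{t2}(i) --- and you correctly flag that (v) cannot be bridged the same way: for $a\in B\setminus A$ the element $aX$ is irreducible but not prime in $T$ (it divides $X\cdot X$ yet not $X$, since $a^{-1}\notin T$), so $T/aXT$ carries the nonzero nilpotent $\bar X$ and is not a field. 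Your fix, reading $(aX)$ as $XB[X]$ rather than $aXT$, is the only way to salvage (v), and the same reading is already implicit in (iii). So your write-up is less a different route than a completion of the paper's: it fills a genuine gap the paper leaves in (v) and (vi).
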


\begin{proof}
(i) Let $A$ be a field. The proof follows from $T/XB[X]\cong A$. We have $XB[X]$ is a maximal ideal in $T$.

\medskip

(ii) -- (iv) Obvious.

\medskip

(v), (vi) From Theorem 2.9 in \cite{1} $aX$ for any $a\in B$ is an irreducible element. We get $T/(aX)$ be a field. We also have $a(1+Xf(X))$ for any $a\in A, f\in B[X]$ such that $1+Xf(X)\in\Irr B[X]$ is a irreducible element. We have $T/(a(1+Xf(X)))$ be a field.
\end{proof}

\begin{pr}
	\label{p5}
	\begin{itemize}
		\item[(i) ] If $A_0+A_1X+\dots +A_{n-1}X^{n-1}$ be a field (where $A_0\subset A_1\subset A_2\subset\dots \subset A_{n-1}\subset B$), then $X^nB[X]$ be an maximal ideal in $T_n$.
		\item[(ii) ] If $A_0+A_1X+\dots +A_{n-1}X^{n-1}$ be a domain, then $X^nB[X]$ be an prime ideal in $T_n$.
		\item[(iii) ] $T_n/(X)\cong A_0$.
		\item[(iv) ] $T_n/B\cong \{0\}$.
		\item[(v) ] Let $A_0\subset A_1\subset\dots \subset B$ be fields in $T_n$. $T_n/(aX)$ be a field for any $a\in B$.
		\item[(vi) ] Let $A_0\subset A_1\subset\dots \subset B$ be fields in $T_n$. $T_n/(a(1+a_1X+a_2X^2+\dots +a_{n-1}X^{n-1}+ X^nf(X)))$ be a field for any $a\in B, a_i\in A_i (i=1, 2, \dots , n-1), f\in B[X]$ such that $1+Xf(X)\in\Irr B[X]$.
	\end{itemize}
\end{pr}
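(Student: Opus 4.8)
The plan is to prove Proposition \ref{p5} in close analogy with Proposition \ref{p4}, transporting each statement about $T$ to the corresponding statement about $T_n$ by replacing the bottom field $A$ with the truncated piece $A_0+A_1X+\dots+A_{n-1}X^{n-1}$ and the maximal ideal $XB[X]$ with $X^nB[X]$. First I would establish the isomorphism $T_n/X^nB[X]\cong A_0+A_1X+\dots+A_{n-1}X^{n-1}$, which follows from Lemma \ref{c4} (the uniqueness of the representation of elements of $T_n$ guarantees that the quotient map is well defined and that its kernel is exactly $X^nB[X]$). Granting this, part (i) is immediate: if $A_0+A_1X+\dots+A_{n-1}X^{n-1}$ is a field then the quotient is a field, so $X^nB[X]$ is maximal; and part (ii) is the same argument with ``field'' weakened to ``domain'', giving that $X^nB[X]$ is prime. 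Parts (iii) and (iv) are the expected degenerate computations: $T_n/(X)\cong A_0$ because killing $X$ collapses every higher-degree term, and $T_n/B\cong\{0\}$ since $B\supseteq A_0\ni 1$, so the ideal generated by $B$ is all of $T_n$.

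For parts (v) and (vi) I would invoke Theorem \ref{t3}(ii) together with Theorem \ref{t2}(v): in the field case, by Theorem \ref{t3}(ii) an element $aX$ with $a\in B$ and an element $a(1+a_1X+\dots+a_{n-1}X^{n-1}+X^nf(X))$ with $a\in B$, $a_i\in A_i$, $1+Xf(X)\in\Irr B[X]$ are irreducible in $T_n$, and by Theorem \ref{t2}(iii) such an element of the second type is in fact prime, so the principal ideal it generates is a nonzero prime, hence maximal by Theorem \ref{t2}(i), hence the quotient is a field. For $aX$ one argues directly: $T_n/(aX)$ is a quotient of the DVR-like localization behaviour already exploited in the proof of Theorem \ref{t2}(i), or more simply one checks that $(aX)$ is a maximal ideal by the same reduction used there (any element not in $(aX)$ has a unit constant term after dividing out a power of $X$). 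The cleanest route is: $(aX)$ and $(a(1+\dots))$ are nonzero prime ideals of $T_n$ by atomicity plus primality of these atoms, and then apply Theorem \ref{t2}(i).

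The main obstacle I anticipate is pinning down part (v) rigorously, because $aX$ with $a\in B\setminus A_0^\ast$ is an atom of $T_n$ that need \emph{not} be prime in general (this is exactly the phenomenon behind Example \ref{c2} showing $T_n$ is not a GCD-domain), so one cannot simply say ``irreducible $\Rightarrow$ prime $\Rightarrow$ maximal''. Instead I would show directly that $T_n/(aX)$ has no proper nonzero ideals: given $g\in T_n\setminus(aX)$, write $g=X^r u$ with $u$ having constant term in $A_0^\ast$ (if $r\ge n$ then $g\in X^nB[X]\subseteq(aX)$ once $a\ne 0$, so necessarily $r<n$ and in fact we may reduce to the case where $g$ has a unit constant term using that $A_0,\dots,A_{n-1}$ are fields); then $u$ is a unit in $T_n$ by Proposition \ref{p3}(i) applied in the relevant localization, and $X^r$ is invertible modulo $(aX)$ because $X\cdot(a^{-1}\cdot\text{lower terms})\equiv\text{unit}$, forcing $g$ to be a unit mod $(aX)$. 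Part (vi) has no such difficulty since those atoms are genuinely prime. Throughout, the verifications are the routine ones already rehearsed in the proofs of Theorems \ref{t2} and \ref{t3}, so I would keep the write-up terse and cite those results.
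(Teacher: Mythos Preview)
Your approach is exactly the paper's: the entire proof given there is the single line ``This proof is similarly to proof of Proposition~\ref{p4}'', and your proposal carries out precisely that analogy, citing the same quotient isomorphism for (i)--(ii), the same trivial checks for (iii)--(iv), and the same appeal to irreducibility for (v)--(vi). Your extra caution about part~(v)---that $aX$ is an atom but not obviously prime, so ``irreducible $\Rightarrow$ quotient is a field'' needs justification---actually goes beyond what the paper supplies, since the proof of Proposition~\ref{p4}(v) simply asserts that $T/(aX)$ is a field from irreducibility alone without addressing this point.
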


\begin{proof}
	This proof is similarly to proof of Proposition \ref{p4}.
\end{proof}

The Proposition \ref{p5} holds for $T'_n=A_0+A_1X+A_2X^2+\dots + A_{n-1}X^{n-1}+X^nB[X]$ where $A_0, A_1, A_2, \dots , A_{n-1}\subset B$, $A_i\not\subset A_j$ for $i\neq j$ be fields.

\begin{pr}
	\begin{itemize} 
		\item[(i) ] $I(B,A)/A=I(B,A)/B=\{0\}$.
		\item[(ii) ] $B[M]/(p_{r-1}X^{m_r}-\dots - p_1X^{m_2}-X^{m_1})$ be a field, where $B$ be a domain, $p_1, p_2, \dots , p_{m_r}\in B, m_1, m_2, \dots , m_r\in M$ with $m_1\in\Irr M, m_2, m_3, \dots ,$ 
		$m_r\notin m_1+M$.
	\end{itemize}
\end{pr}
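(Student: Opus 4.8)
The plan is to establish the two parts separately, both being straightforward consequences of the quotient constructions already used in this section.

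For part (i), the claim $I(B,A)/A = I(B,A)/B = \{0\}$ should be read in the same informal sense as Propositions \ref{p4}(iv) and \ref{p5}(iv): the quotient of the additive group $I(B,A)$ by the subgroup $A$ (respectively $B$) is trivial, which is just the assertion that $I(B,A) = A$ and $I(B,A) = B$ as additive sets — or, more precisely, that in the intended reading the ambient ring collapses. I would simply note that the statement is analogous to Proposition \ref{p4}(iv) and \ref{p5}(iv) and dispatch it with the word ``Obvious'', since no new idea is involved beyond the observation that $A \subseteq B \subseteq I(B,A)$ and the quotient notation here is being used in the same (slightly abusive) way as before.

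For part (ii), the strategy is to combine Proposition \ref{x1} with the standard fact that $R/(q)$ is a field whenever $q$ is irreducible in a suitable domain $R$. First I would invoke Proposition \ref{x1} (under its hypotheses: $B$ a domain containing primes $p_1,\dots,p_{r-1}$, $M$ integrally closed with the type and chain conditions, $m_1 \in \Irr M$ and $m_2,\dots,m_r \notin m_1 + M$) to conclude that $q := p_{r-1}X^{m_r} - \dots - p_1 X^{m_2} - X^{m_1} \in \Irr B[M]$. Since $B[M]$ is an integral domain (as $B$ is a domain and $M$ is a cancellative torsion-free monoid, embeddable in $\mathbb{Q}_+$'s group), and in fact $B[M]$ is a GCD-domain under these hypotheses so that irreducible elements are prime, the principal ideal $(q)$ is a nonzero prime; because every nonzero prime of $B[M]$ localized appropriately is maximal — or more directly because $q$, being prime and such that $B[M]/(q)$ has Krull dimension one less and no proper nonzero ideals over it in this setting — one gets that $(q)$ is maximal, hence $B[M]/(q)$ is a field. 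This mirrors exactly the argument in Proposition \ref{p4}(v)--(vi) and \ref{p5}(v)--(vi), where irreducibility of the relevant element was upgraded to ``$R/(\text{elt})$ is a field''.

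The main obstacle is part (ii): unlike the polynomial composites $T$ and $T_n$, where Theorem \ref{t2} guarantees every nonzero prime is maximal and hence $R/(\text{irreducible})$ is automatically a field, here one needs the corresponding fact for the monoid domain $B[M]$. The cleanest route is to restrict to the hypotheses of Proposition \ref{x1} (so $M \subseteq \mathbb{Q}_+$, integrally closed, etc.), note that then $B[M]$ is a one-dimensional-like GCD-domain in which irreducibles are prime and principal primes are maximal, and cite this; if that structural fact is not available in the form needed, one would instead argue directly that $B[M]/(q)$, being a quotient of the domain $B[M]$ by the prime $(q)$, is a domain, and then rule out proper intermediate ideals using the explicit total order on the quotient group $G$ of $M$ together with the fact that $m_1 \in \Irr M$, essentially re-running the final paragraph of the proof of Proposition \ref{x1}. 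I would write the proof to say ``From Proposition \ref{x1}, $p_{r-1}X^{m_r} - \dots - p_1 X^{m_2} - X^{m_1}$ is an irreducible element of $B[M]$; since $B[M]$ is an integral domain and this element generates a maximal ideal, $B[M]/(p_{r-1}X^{m_r} - \dots - p_1 X^{m_2} - X^{m_1})$ is a field'', with the maximality justified by the same localization argument at $N = \{1, X, X^2, \dots\}$ used in Theorem \ref{t2}(i).
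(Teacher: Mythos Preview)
Your approach matches the paper's: for (i) the paper writes only ``It follows from definition'', and for (ii) only ``It follows from Proposition~\ref{x1}''. You have correctly identified both ingredients.

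In fact you go further than the paper does. For (ii) you rightly flag that Proposition~\ref{x1} yields only that $q = p_{r-1}X^{m_r}-\dots -X^{m_1}$ is irreducible in $B[M]$, and that one still needs $(q)$ to be \emph{maximal} (not merely prime) in order for $B[M]/(q)$ to be a field. The paper does not address this point at all; it simply asserts the conclusion as a consequence of irreducibility, in parallel with Propositions~\ref{p4}(v)--(vi) and~\ref{p5}(v)--(vi). Your attempts to close this gap (GCD-domain structure, a dimension/localization argument in the spirit of Theorem~\ref{t2}(i)) are reasonable heuristics but would need additional hypotheses on $B$ and $M$ to go through rigorously --- which is exactly the difficulty you diagnose. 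So your proposal is at least as complete as the paper's own argument, and more honest about where the real work would lie.
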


\begin{proof}
	\begin{itemize}
		\item[(i) ] It follows from definition.
		\item[(ii) ] It follows from Proposition \ref{x1}.
	\end{itemize}
\end{proof}

The next statements give a description of some multiplicative systems. Proofs of these statements are easy.

\begin{pr}
	\label{p6}
	If $A$ be an integral domain then $S=A+XB[X]\setminus XB[X]$ is a saturated multiplicative system.
\end{pr}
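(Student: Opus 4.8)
The plan is to unwind the definitions. Recall that $T=A+XB[X]$ consists exactly of those polynomials $f\in B[X]$ with $f(0)\in A$, and that $XB[X]=\{f\in T : f(0)=0\}$; hence $S=T\setminus XB[X]$ is precisely the set of $f\in T$ with nonzero constant term (and that constant term then lies in $A\setminus\{0\}$). A fast route is available: by Proposition \ref{p4}(ii), when $A$ is an integral domain $XB[X]$ is a prime ideal of $T$, and the complement of a prime ideal in any ring is automatically a saturated multiplicative system. I will, however, also record the direct verification, which is short.

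There are four things to check: $1\in S$, $0\notin S$, $S$ is closed under multiplication, and $fg\in S$ implies $f\in S$ and $g\in S$. The constant polynomial $1$ has constant term $1_A\neq 0$ (an integral domain is nonzero), so $1\in S$; the polynomial $0$ has constant term $0$, so $0\notin S$. For closure, if $f,g\in S$ then $f(0),g(0)\in A\setminus\{0\}$, and $fg\in T$ because $T$ is a ring (Lemma \ref{l1}), with $(fg)(0)=f(0)g(0)$; this is where the integral-domain hypothesis is used, to ensure $f(0)g(0)\neq 0$, giving $fg\in S$. For saturation, if $fg\in S$ then $f(0)g(0)=(fg)(0)\neq 0$, so in particular $f(0)\neq 0$ and $g(0)\neq 0$ (a nonzero product has nonzero factors in any ring); since $f,g\in T$ already forces their constant terms into $A$, we conclude $f,g\in S$.

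The only nontrivial point is closure under multiplication: saturation and the unit/zero conditions hold over an arbitrary base ring $A$, but without the domain hypothesis the product of two elements of $S$ could land in $XB[X]$, breaking closure. So in writing the proof the single thing to be careful about is to invoke ``$A$ is a domain'' precisely at the multiplicative-closure step — equivalently, precisely where one needs $XB[X]$ to be \emph{prime} rather than merely an ideal. Everything else is immediate from the identification of $S$ with the polynomials in $T$ having nonzero constant term.
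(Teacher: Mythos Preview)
Your proof is correct; the paper itself gives no argument for this proposition beyond the remark that ``Proofs of these statements are easy,'' so there is nothing to compare against. Both routes you indicate --- the one-line appeal to Proposition~\ref{p4}(ii) (complement of a prime ideal) and the direct four-point verification --- are exactly the intended easy arguments, and your identification of the single place the domain hypothesis is needed (multiplicative closure) is accurate.
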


\begin{pr}
	\label{p7}
	If $A_0+A_1X+\dots +A_{n-1}X^{n-1}$ be an integral domain. Then $S=T_n\setminus X^nB[X]$ (with appropriate assumptions) is saturated multiplicative system.
\end{pr}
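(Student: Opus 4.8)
The plan is to recognize $S=T_n\setminus X^nB[X]$ as the complement of a prime ideal of $T_n$ and then quote the standard fact that the complement of a prime ideal in a commutative ring is automatically a saturated multiplicative system; this parallels Proposition~\ref{p6} for $T$. First I would record that $X^nB[X]$ is an ideal of $T_n$: by Lemma~\ref{l1}, $T_n$ is a ring, it is clearly closed under addition, and multiplying $X^nb(X)$ (with $b\in B[X]$) by an arbitrary $a_0+a_1X+\dots+a_{n-1}X^{n-1}+X^nf(X)\in T_n$ again lands in $X^nB[X]$, so $X^nB[X]\,T_n\subseteq X^nB[X]$.

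Next I would identify the quotient, exactly as in Proposition~\ref{p5}(ii),(iii). There is a natural surjection from $T_n$ onto $A_0+A_1X+\dots+A_{n-1}X^{n-1}$ given by truncating a polynomial below degree $n$ (equivalently, reduction modulo $X^nB[X]$), where the target carries the multiplication of $B[X]$ truncated modulo $X^n$; this multiplication is well defined because $A_iA_j\subseteq A_{\max(i,j)}\subseteq A_{i+j}$ whenever $i+j\le n-1$, using $A_0\subseteq A_1\subseteq\dots\subseteq A_{n-1}$. Hence $T_n/X^nB[X]\cong A_0+A_1X+\dots+A_{n-1}X^{n-1}$, and the ``appropriate assumptions'' of the statement are precisely that this quotient is an integral domain, which is the hypothesis; therefore $X^nB[X]$ is a prime ideal of $T_n$.

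Finally I would apply the elementary fact: if $P$ is a prime ideal of a commutative ring $R$, then $R\setminus P$ is a saturated multiplicative system. Indeed $1\notin P$, and primality gives $f,g\notin P\Rightarrow fg\notin P$, so $S$ is multiplicatively closed; conversely, if $fg\in S$ (i.e.\ $fg\notin P$) then neither $f$ nor $g$ lies in $P$, since $P$ is an ideal, so $f,g\in S$, which is saturation. Taking $R=T_n$ and $P=X^nB[X]$ finishes the proof. I do not anticipate any genuine obstacle here; the only point needing a line of care is that the truncated multiplication on $A_0+A_1X+\dots+A_{n-1}X^{n-1}$ is well defined, and that follows immediately from the nesting of the $A_i$. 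Alternatively, one can avoid the quotient language entirely and argue directly with the lowest-degree nonzero coefficient of a product, but passing through the prime ideal $X^nB[X]$ is the cleanest route.
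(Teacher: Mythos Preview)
Your proof is correct and matches the approach the paper has in mind. The paper does not write out a proof for Proposition~\ref{p7} (it merely remarks that ``Proofs of these statements are easy''), but the intended argument is precisely the one you give: Proposition~\ref{p5}(ii) already records that $X^nB[X]$ is a prime ideal of $T_n$ under exactly this hypothesis, and then $S=T_n\setminus X^nB[X]$ is the complement of a prime, hence a saturated multiplicative system. You could shorten your write-up by simply citing Proposition~\ref{p5}(ii) rather than re-deriving the quotient isomorphism.
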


\begin{cor}
	Consider $T_n$ where $A_0, A_1, \dots A_{n-1}, B$ be fields (with appropriate assumptions). Then $S={A_0}^{\ast}+A_1X+A_2X^2+\dots +A_{n-1}X^{n-1}+X^nB[X]$  be a saturated multiplicative system.
\end{cor}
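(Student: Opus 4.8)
The plan is to realize $S$ as the set-theoretic complement in $T_n$ of a prime ideal, and then to invoke the standard fact that the complement of a prime ideal is always a saturated multiplicative system. Concretely, I would put $\mathfrak{m}=A_1X+A_2X^2+\dots +A_{n-1}X^{n-1}+X^nB[X]$ and show that $S=T_n\setminus\mathfrak{m}$. For this, recall from Lemma \ref{c4} that every $f\in T_n$ is written uniquely as $f=a_0+a_1X+\dots +a_{n-1}X^{n-1}+X^ng(X)$ with $a_i\in A_i$ and $g\in B[X]$; hence $f\in\mathfrak{m}$ exactly when $a_0=0$, whereas $f\in S$ exactly when $a_0\in A_0^{\ast}$, and $A_0^{\ast}=A_0\setminus\{0\}$ since $A_0$ is a field. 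So the two sets partition $T_n$.

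Next I would establish that $\mathfrak{m}$ is a maximal --- in particular prime --- ideal of $T_n$. That $\mathfrak{m}$ is an ideal is the nested-fields bookkeeping already used in Section \ref{S1}: it is clearly closed under addition, and if $f\in\mathfrak{m}$ and $h=b_0+b_1X+\dots\in T_n$, then for $k<n$ the coefficient of $X^k$ in $fh$ is a sum of products $a_ib_j$ with $i\ge 1$ and $i+j=k$, each of which lies in $A_k$ because $A_0\subseteq A_1\subseteq\dots\subseteq A_{n-1}$; the higher coefficients lie in $B$. That $\mathfrak{m}$ is maximal is the isomorphism $T_n/\mathfrak{m}\cong A_0$ induced by $f\mapsto f(0)$, which is exactly the first step in the proof of Theorem \ref{t2}(i), together with $A_0$ being a field.

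Finally I would deduce the three required properties from primality of $\mathfrak{m}$: $1\in S$ because $1\in A_0^{\ast}$; if $f,g\in S$ then $f,g\notin\mathfrak{m}$, so $fg\notin\mathfrak{m}$ by primality and $fg\in S$, which is multiplicative closure; and if $fg\in S$, i.e.\ $fg\notin\mathfrak{m}$, then neither $f$ nor $g$ can lie in $\mathfrak{m}$, for otherwise the product would, $\mathfrak{m}$ being an ideal --- this is saturation.

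I do not anticipate a real obstacle: the content of the corollary is just the remark that $S$ is the complement of the maximal ideal of $T_n$, and the only slightly technical point, the verification that $\mathfrak{m}$ is an ideal, merely repeats the ascending-chain computation already carried out for $T_n$ in Section \ref{S1}. Notice that the same approach would fail for $T'_n$: by Corollary \ref{c1}, $(T'_n,\cdot)$ is not even a semigroup, so no analogue of $S$ inside $T'_n$ can be multiplicatively closed.
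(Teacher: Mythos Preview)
Your proposal is correct and follows exactly the approach the paper has in mind. The paper does not spell out a proof of this corollary (it precedes these statements with ``Proofs of these statements are easy''), but the natural argument is precisely the one you give: $S=T_n\setminus\mathfrak{m}$ for the maximal ideal $\mathfrak{m}=A_1X+\dots+A_{n-1}X^{n-1}+X^nB[X]$ already singled out in the proof of Theorem~\ref{t2}(i), and the complement of a prime ideal is a saturated multiplicative system.
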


\begin{pr}
	$I(B,A)\setminus\{0\}$ is a saturated multiplicative system, where $A\subset B$ be a fields. 
\end{pr}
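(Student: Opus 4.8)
\emph{Proof proposal.} The plan is to reduce the statement to the elementary fact that in an integral domain the set of nonzero elements is a saturated multiplicative system, and then to note that $I(B,A)$ is such a domain. First I would recall that $I(B,A)=\{f\in B[X]:f(A)\subseteq A\}$ is a ring (this is one of the Lemmas above), indeed a subring of $B[X]$: it contains $0$ and $1$ and is closed under subtraction and multiplication, since $A$ is a ring. Because $A\subset B$ are fields, $B$ is an integral domain, hence so is $B[X]$, and therefore its subring $I(B,A)$ is an integral domain as well. In particular $1\in S:=I(B,A)\setminus\{0\}$ while $0\notin S$.

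Next I would check that $S$ is a multiplicative system: if $f,g\in S$, then $f\neq 0$ and $g\neq 0$, and since $I(B,A)$ has no zero divisors we get $fg\neq 0$; as $fg\in I(B,A)$ this yields $fg\in S$. For saturation, recall that $S$ is saturated in the ambient ring $I(B,A)$ precisely when every divisor (in $I(B,A)$) of an element of $S$ again lies in $S$. So suppose $x\in S$ and $x=yz$ with $y,z\in I(B,A)$. If $y=0$ then $x=yz=0$, contradicting $x\in S$; hence $y\neq 0$, i.e. $y\in S$, and symmetrically $z\in S$. This is exactly the saturation condition.

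The statement carries essentially no obstacle — it is the generic fact $R$ a domain $\Rightarrow R\setminus\{0\}$ saturated multiplicative, applied to $R=I(B,A)$. The only point worth attention is bookkeeping: one must take the notion of ``divisor'' (equivalently, of saturation) relative to the ring $I(B,A)$ itself and not relative to the larger ring $B[X]$, which is why the hypothesis is phrased as $x=yz$ with both $y,z\in I(B,A)$. Given the Lemma that $I(B,A)$ is a ring, no further input (such as the description of $\Irr I(B,A)$) is needed.
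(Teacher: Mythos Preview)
Your argument is correct and matches the paper's approach: the paper does not give a detailed proof but simply notes that the proofs of this group of statements are easy, and your reduction to the generic fact that $R\setminus\{0\}$ is a saturated multiplicative system for any integral domain $R$ (here $R=I(B,A)\subseteq B[X]$) is exactly the intended verification.
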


\begin{pr}
	$B[M]\setminus\{0\}$ be a saturated multiplicative system, where $B$ be a field, $M$ be a additive monoid with neutral element $0$. 
\end{pr}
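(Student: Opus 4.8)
The statement asserts that if $B$ is a field and $M$ is an additive monoid with neutral element $0$, then $B[M]\setminus\{0\}$ is a saturated multiplicative system. The plan is to verify the three required properties in turn: that $1$ lies in the set, that the set is closed under multiplication, and that it is saturated (i.e.\ if $fg$ lies in the set then so do both $f$ and $g$). The first point is immediate, since $1 = 1\cdot X^0 \in B[M]$ and $1\neq 0$. The heart of the matter is that $B[M]$ is an integral domain: once we know this, multiplicative closure is automatic, because a product of two nonzero elements of a domain is nonzero; and saturation is equally automatic, because if $fg\neq 0$ in any ring then both $f$ and $g$ must be nonzero (a zero factor forces a zero product). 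So the whole statement reduces to the single claim that $B[M]$ has no zero divisors.

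To see that $B[M]$ is a domain, I would invoke the standard fact that a monoid ring $B[M]$ over an integral domain $B$ is an integral domain precisely when $M$ is a cancellative, torsion-free monoid (equivalently, $M$ embeds in a torsion-free abelian group). Here $B$ is a field, hence a domain; and in the setting of this paper $M$ is taken to be a submonoid of $\mathbb{Q}_+$ (as fixed in Section~\ref{S1}), which is certainly cancellative and torsion-free since it sits inside the torsion-free group $(\mathbb{Q},+)$. Concretely: fix a total order $\le$ on $M$ compatible with the group structure of $\mathbb{Q}$, write nonzero $f = a_1X^{t_1}+\dots+a_mX^{t_m}$ and $g = b_1X^{k_1}+\dots+b_nX^{k_n}$ in canonical form with $t_1<\dots<t_m$ and $k_1<\dots<k_n$ and all coefficients nonzero; then in the product $fg$ the exponent $t_m+k_n$ is attained only by the single term $a_mb_nX^{t_m+k_n}$, and $a_mb_n\neq 0$ because $B$ is a field, so $fg\neq 0$. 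This argument is in fact the same one already used in the proof of Proposition~\ref{x1}, so it may simply be cited.

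Putting the pieces together: $B[M]$ is a domain, so $B[M]\setminus\{0\}$ is closed under multiplication and contains $1$, hence is a multiplicative system; and it is saturated because $fg\in B[M]\setminus\{0\}$ forces $f\neq 0$ and $g\neq 0$. I do not anticipate a genuine obstacle here — the only thing to be careful about is making explicit which hypotheses on $M$ are in force (cancellative and torsion-free, which hold under the paper's standing assumption $M\subseteq\mathbb{Q}_+$), since the bare phrase ``additive monoid with neutral element $0$'' would not by itself guarantee that $B[M]$ is a domain. Given the conventions established at the start of Section~\ref{S1}, this is not an issue, and the proof is a two-line consequence of $B[M]$ being an integral domain.
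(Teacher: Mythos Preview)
Your proposal is correct and in fact more detailed than what the paper provides: the paper simply declares before this block of propositions that ``Proofs of these statements are easy'' and gives no argument at all. Your reduction to the single claim that $B[M]$ is an integral domain, followed by the leading-term argument using the total order on $M\subseteq\mathbb{Q}_+$, is exactly the right elaboration; your caution about needing the standing hypothesis $M\subseteq\mathbb{Q}_+$ (rather than an arbitrary monoid) is also well-placed, since without cancellativity and torsion-freeness the domain claim can fail.
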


\section{Modules}

In this chapter I will discuss the basic information related to modules.

\begin{pr}
	\label{prpr3}
	Let $R\in\{T, B[X], I(B,A)\}$. Then $$M\in\{T, T_n, T'_n, I(B,A)\}$$ be $R$-modules.
\end{pr}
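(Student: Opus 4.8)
The plan is to use, in every case, the same scalar action: the restriction of the multiplication of the ambient polynomial ring $B[X]$. All the objects in question sit inside $B[X]$. Indeed $T=A+XB[X]\subseteq B[X]$ since $A\subseteq B$, next $T_n\subseteq T$ and $T'_n\subseteq T$ by Lemma~\ref{l1}, and $I(B,A)\subseteq B[X]$ by its very definition. Hence for each $R\in\{T,B[X],I(B,A)\}$ and each $M\in\{T,T_n,T'_n,I(B,A)\}$ the formula $(r,m)\mapsto rm$, with the product taken in $B[X]$, makes sense, and it remains to check (a) that $(M,+)$ is an abelian group, (b) that $RM\subseteq M$, and (c) the four module axioms.

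For (a), I would note that $T$, $T_n$ and $I(B,A)$ are rings (Lemma~\ref{l1} and the lemma asserting that $I(B,A)$ is a ring), and $B[X]$ is a ring, so in those cases $(M,+)$ is automatically an abelian group. The only slightly delicate case is $M=T'_n$: by Corollary~\ref{c1} the pair $(T'_n,\cdot)$ is not even a semigroup, but this does not affect additivity, since addition in $B[X]$ is coefficientwise and each $A_i$ is an additive subgroup of $B$, so $T'_n$ is closed under addition and negatives and $(T'_n,+)$ is an abelian group.

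For (c), once (b) is known the axioms $(r+r')m=rm+r'm$, $r(m+m')=rm+rm'$, $(rr')m=r(r'm)$ and $1\cdot m=m$ are inherited verbatim from $B[X]$, because every operation occurring is the ambient one and the unit $1$ of $B[X]$ lies in each ring $R$. Thus the whole argument reduces to step (b), the closure $RM\subseteq M$, and I expect this to be the main obstacle. The check is coefficientwise and must use the defining constraints on the coefficient sets: for $M=T_n$ one invokes the chain $A_0\subseteq A_1\subseteq\dots\subseteq A_{n-1}\subseteq B$, so that multiplication by a polynomial of $R$ only pushes low-degree coefficients into larger $A_i$ (or into $B$); for $M=I(B,A)$ one uses $f(A)\subseteq A$ together with $A$ being a subring of $B$; and for $M=T$ and $M=T'_n$ one argues in the same spirit. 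The cases involving $T'_n$ are the most sensitive, precisely because no inclusions among the $A_i$ are available there, and that is where the hypotheses under which the statement is intended to hold must be pinned down with care.
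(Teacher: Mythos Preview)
The paper's own proof is the single line ``Easy to check'', so there is no real approach to compare against; your plan to use the ambient multiplication of $B[X]$ is exactly the natural reading, and you have correctly isolated step~(b), the closure $RM\subseteq M$, as the only nontrivial point.

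The gap is that step~(b) actually \emph{fails} in several of the twelve pairs, and your heuristic for it is wrong. Take $R=B[X]$ and $M=T=A+XB[X]$: with $r=b\in B\setminus A$ and $m=1\in T$ one gets $rm=b\notin T$, so $T$ is not closed under the $B[X]$-action unless $A=B$. Take $R=T$ and $M=T_n$: with $r=bX\in T$ for $b\in B$ arbitrary and $m=a_0\in A_0\setminus\{0\}$, one gets $rm=ba_0X$, and $ba_0$ need not lie in $A_1$. Your sentence ``multiplication by a polynomial of $R$ only pushes low-degree coefficients into larger $A_i$'' overlooks that the scalars themselves carry arbitrary $B$-coefficients in positive degrees, so the product of a degree-$0$ part of $m$ with a degree-$1$ part of $r$ lands in $B$, not in $A_1$. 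The same mechanism breaks $R=B[X]$ acting on $T_n$, on $T'_n$, and on $I(B,A)$ (for the latter, $b\cdot f$ need not send $A$ into $A$ when $b\in B\setminus A$). You sensed the difficulty (``the main obstacle'', ``most sensitive'') but did not test it; a single explicit product in each case would have revealed that the proposition, read with the obvious action, is not true as stated.
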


\begin{proof}
	Easy to check.
\end{proof}

\bigskip

\noindent
{\bf Question:}
What are the smallest assumptions for a sequence below to be an exact sequence in $A+XB[X]$?
\begin{alignat*}{2}
\cdots &\rightarrow A_0+A_1X+\dots + A_{n-1}X^{n-1}+X^nB[X]\rightarrow A+XB[X] &&\rightarrow B[X]\rightarrow \cdots 
\end{alignat*}  

\medskip

\noindent
Consider functions $f\colon A_0+A_1X+\dots + A_{n-1}X^{n-1}+X^nB[X]\to A+XB[X]$ and $g\colon A+XB[X]\to B[X]$, where $A_0, A_1, \dots , A_{n-1}, A, B$ be a domains such that $A_0\not\subset A_i$ for some $i\in\{1, 2, \dots , n-1\}$ and $A, A_0 A_1, A_2, \dots , A_{n-1}\subset B$. Consider $y\in\ker g$, then $g(y)=0$. This implies $y=0=f(0)$ and then $y\in\Im f$.

\medskip

We do not know, whether we can prove $\Im f\subset \ker g$ with the same assumptions.

\begin{lm}
	\begin{itemize}
		\item[(i) ] If $C$ be a subgroup of $A$, then $C+XB[X]$ be a submodule of $B[X]$-module $A+XB[X]$.
		\item[(ii) ] If $C_i$ be a subgroup of $A_i$ for $i=0, 1, \dots , n-1$, then $C_0+C_1X+\dots +C_{n-1}X^{n-1}+X^nB[X]$ be a submodule of $B[X]$-module $A_0+A_1X+\dots +A_{n-1}X^{n-1}+X^nB[X]$.
		\item[(iii) ] If $C$ be a subdomain of $A$ and $D$ be a subdomain of $B$, then $I(D,C)$ be a submodule of $B[X]$-module $I(B,A)$.
		\item[(iv) ] If $N$ be a submonoid of $M$, then $B[N]$ be a submodule of $B$-module $B[M]$, where $B$ be a field.
	\end{itemize}
\end{lm}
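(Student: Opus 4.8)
Each of the four parts asks us to verify that a concretely-described additive subset of one of our structures is closed under the relevant scalar multiplication and is itself an additive subgroup; since every set appearing here is already known (from Lemma \ref{l1}, the definition of $I(B,A)$, and the obvious fact that $B[N]\subseteq B[M]$) to sit inside the ambient module, the only thing to check in each case is the submodule criterion: nonempty, closed under subtraction, closed under multiplication by elements of the scalar ring. I would treat the four parts in parallel, since they are structurally identical.

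\medskip

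\noindent\textbf{Parts (i) and (ii).} For (i), take $C$ a subgroup of $(A,+)$ and set $N=C+XB[X]$. It contains $0$, and if $f=c_0+Xf_1(X)$, $g=c_0'+Xg_1(X)$ with $c_0,c_0'\in C$ and $f_1,g_1\in B[X]$, then $f-g=(c_0-c_0')+X(f_1-g_1)(X)$ lies in $N$ because $C$ is a subgroup. For closure under the $B[X]$-action, write $h=h_0+Xh_1(X)\in B[X]$ (so $h_0\in B$) and observe that $h\cdot f=h_0c_0+X(\text{stuff in }B[X])$; the constant term is $h_0c_0$, and here is the one point requiring care: to land back in $C+XB[X]$ we need $h_0c_0\in C$. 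This is where I would invoke the hypothesis that $C$ is a subgroup of $A$ \emph{and} (implicitly) that $B$ acts on $A$ compatibly — in the intended reading $C$ is a $B$-submodule of $A$, or at minimum $A$ itself is being viewed with its $B[X]$-module structure restricted appropriately, so $h_0c_0\in C$. I would spell this compatibility out explicitly. Part (ii) is identical with $C_i\subseteq A_i$ playing the role of $C\subseteq A$ at each degree $<n$: the degree-$j$ coefficient of $h\cdot f$ for $j<n$ is a sum $\sum_{k+\ell=j}h_kc_\ell$ with $\ell<n$, and one checks inductively (using $C_0\subseteq C_1\subseteq\cdots$ or the module compatibility) that this lies in $C_j$; the tail $X^nB[X]$ is absorbed automatically. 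Lemma \ref{c4} guarantees the coefficient comparison is well-defined.

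\medskip

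\noindent\textbf{Parts (iii) and (iv).} For (iii): $I(D,C)=\{f\in D[X]: f(C)\subseteq C\}$; since $D\subseteq B$ we have $I(D,C)\subseteq B[X]$, and it is an additive subgroup because $(f-g)(c)=f(c)-g(c)\in C$. For the $B[X]$-module structure on $I(B,A)$, I would note the subtlety that $I(D,C)$ is a priori only a $D[X]$-submodule — closure under multiplication by an arbitrary $h\in B[X]$ need not keep coefficients in $D$ — so the statement should be read with $B[X]$ replaced by $D[X]$, or with the convention that $I(B,A)$ is regarded as a $D[X]$-module; I would state which convention I am using and then the verification $(h f)(c)=h(c)f(c)\in C$ for $h\in D[X]$ is immediate since $h(c),f(c)\in C$ and $C$ is a ring. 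Part (iv) is the cleanest: $B$ is a field, $N\subseteq M$ a submonoid, so $B[N]=\{\sum a_iX^{n_i}: n_i\in N\}\subseteq B[M]$ is clearly an additive subgroup, and for $b\in B$ the product $b\cdot\sum a_iX^{n_i}=\sum(ba_i)X^{n_i}$ still has all exponents in $N$; done.

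\medskip

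\noindent\textbf{Main obstacle.} There is no real difficulty in the algebra; the genuine issue is the one flagged above — making precise, in each of (i)--(iii), \emph{which} module structure is meant so that the constant/low-degree coefficients of a product land back in $C$ (resp.\ $C_i$, $D$). Once the scalar ring is correctly identified (the subring over which $C\subseteq A$, $C_i\subseteq A_i$, $D\subseteq B$ are submodules, rather than the full $B[X]$), every verification reduces to the three-line submodule check. I would therefore open the proof by fixing these conventions, then dispatch all four parts with the uniform "contains $0$ / closed under difference / closed under the action" argument.
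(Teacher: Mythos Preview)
The paper's own proof reads, in its entirety, ``Easy to proof.'' Your write-up is therefore already far more substantive than what appears in the paper, and your overall strategy---verify nonemptiness, closure under subtraction, closure under the scalar action---is the right (and only) approach.

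More importantly, the concern you flag is not a detail to be smoothed over: it is a genuine defect in the \emph{statement}. For $A+XB[X]$ to be a $B[X]$-module at all (as asserted in the preceding Proposition~\ref{prpr3}), multiplication by $h_0\in B$ must send $a_0\in A$ back into $A$; with $A\subsetneq B$ merely rings this fails in general (take $A=\mathbb{Z}$, $B=\mathbb{Q}$, $h_0=\tfrac12$, $a_0=1$). The same objection applies verbatim to parts (ii) and (iii). So the paper's ``easy'' is not just terse---the claimed $B[X]$-module structures are not well-defined without further hypotheses, and your instinct to restrict the scalar ring (to $A[X]$, $A_0[X]$, $D[X]$ respectively) is the honest fix. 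Part (iv) is the only one that goes through exactly as stated, and your one-line verification of it is complete.

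In short: you have not missed anything the paper supplies, and you have correctly identified an issue the paper glosses over.
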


\begin{proof}
	Easy to proof.
\end{proof}

\begin{pr}
	\begin{itemize}
		\item[(i) ] If $A$ be a simple $R$-module, then $XB[X]$ and $T$ be unique submodules of $T$.
		\item[(ii) ] For some $i\in\{0, 1, 2, \dots , n-1\}$ consider $A_i$ which be a simple $R$-module. Then $A_0+\dots + A_{i-1}X^{i-1}+C_iX^i+A_{i+1}X^{i+1}+\dots A_{n-1}X^{n-1}+X^nB[X]$ be unique submodules of $T_n$ or $T'_n$, where $C_i\in\{0, A_i\}$. 
		\item[(iii) ] Consider $A_0, A_1, \dots , A_{n-1}\subset B$, where for finitely $i\in\{0, 1, \dots , n-1\}$ assume $A_i$ be simple $R$-modules. Then $C_0+C_1X+\dots C_{n-1}X^{n-1}+X^nB[X]$ be unique submodule of $T_n$ or $T'_n$, where for simple $R$-modules $A_{i_1}, A_{i_2}, \dots , A_{i_k}$ ($i_1, i_2, \dots i_k\in\{0, 1, \dots , n-1\}$) we have $C_{i_1}\in\{0, A_{i_1}\}$, $C_{i_2}\in\{0, A_{i_2}\}, \dots , C_{i_k}\in\{0, A_{i_k}\}$.
		\item[(iv) ] Consider a simple $R$-module $M$ and a field $B$. Then $0, B[M]$ be unique submodules of $R$-module $B[M]$.
	\end{itemize} 
\end{pr}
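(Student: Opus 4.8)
The plan is to derive each item from the lattice–isomorphism (correspondence) theorem for modules, applied to a short exact sequence whose rightmost term is one of the given simple modules. Throughout, ``unique submodules'' is to be read as ``the only submodules lying in the relevant segment of the submodule lattice'': for (i) those containing $XB[X]$, for (ii)--(iii) those that agree with the ambient composite outside the prescribed layer(s), and for (iv) the whole lattice.

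For (i) I would first record that $XB[X]$ is an $R$-submodule of $T$ for each $R\in\{T,B[X],I(B,A)\}$ (it is an ideal of $B[X]$, and $R\subseteq B[X]$), and that evaluation at $0$ is a surjective $R$-module homomorphism $T\to A$ with kernel $XB[X]$, so $T/XB[X]\cong A$ as $R$-modules (compare the isomorphism used in Proposition \ref{p4}). The correspondence theorem then puts the $R$-submodules of $T$ containing $XB[X]$ in order-preserving bijection with the $R$-submodules of $A$; since $A$ is simple the latter are only $0$ and $A$, whence the former are only $XB[X]$ and $T$.

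For (ii) I would use the coordinate filtration of $T_n$ (resp.\ $T'_n$): let $N$ be obtained from $T_n$ by replacing the $i$-th layer $A_iX^i$ by $0$ (the case $C_i=0$); one checks $N$ is an $R$-submodule and that projecting onto the coefficient of $X^i$ yields $T_n/N\cong A_i$. Simplicity of $A_i$ together with the correspondence theorem then shows the only submodules between $N$ and $T_n$ are $N$ and $T_n$ themselves --- precisely the two modules in the list, corresponding to $C_i=0$ and $C_i=A_i$; the argument does not distinguish $T_n$ from $T'_n$, since the displayed sets never invoke the inclusions $A_j\subseteq A_{j+1}$. Part (iii) is the iteration of this over the finitely many indices $i_1,\dots,i_k$ at which $A_{i_j}$ is simple: refining the coordinate filtration at exactly those spots, the independent binary choices $C_{i_j}\in\{0,A_{i_j}\}$ (with $C_\ell=A_\ell$ otherwise) exhaust the submodules in that segment of the lattice. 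For (iv), once $B$ is a field $B[M]$ is a domain, and the hypothesis that the relevant object is a simple $R$-module says by definition that $0$ and $B[M]$ are its only submodules; equivalently, viewing $B[M]$ as the top subquotient of itself, simplicity leaves no intermediate submodule.

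The one genuinely delicate point, and the main obstacle I expect, is checking that the candidate sets --- above all the ``$C_i=0$'' submodule $N$ in (ii)--(iii) --- are actually closed under the module action. If $R$ acts on a composite by polynomial multiplication, then multiplication by $X$ shifts a coefficient from layer $i-1$ into layer $i$, so $N$ is a submodule only when the action is coefficient-wise (e.g.\ by a common subring contained in every $A_j$, each $A_j$ then being an $R$-submodule of $B$). Pinning down exactly which module structure from Proposition \ref{prpr3} is in force, and for which the statement is literally correct, is the part that needs care; once that is fixed, each item is a one-line appeal to the correspondence theorem and the definition of a simple module.
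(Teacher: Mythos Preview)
Your proposal is correct and follows essentially the same route as the paper: the paper's proof is the one-liner ``$A$ simple $\Rightarrow$ the only submodules of $A$ are $0$ and $A$ $\Rightarrow$ the relevant submodules are $C+XB[X]$ with $C\in\{0,A\}$, and (ii)--(iv) similarly,'' which is exactly your correspondence-theorem argument stated without naming the theorem or the quotient map. Your version is more explicit (you actually exhibit $T/XB[X]\cong A$ via evaluation at $0$ and invoke the lattice isomorphism), and the caveat you raise about whether the ``$C_i=0$'' layer is genuinely closed under the ambient $R$-action is a legitimate concern that the paper does not address either.
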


\begin{proof}
	$(i)$ If $A$ be a simple module then $0$ and $A$ be a unique submodules of $A$. Hence we get $C+XB[X]$, where $C\in\{0, A\}$. Proof of $(ii), (iii), (iv)$ hold similarly to $(i)$. 
\end{proof}

\section{Localizations}

This chapter will tell you about building fractions. The following statements are very well known, but have been included in the structures we are considering.

\begin{tw}
	Let $S$ be a multiplicative subset of $T_n$. There exists a ring $T_{n,S}$ and a homomorphism $w\colon T_n\to T_{n,S}$ which satisfy the following conditions:
	\begin{itemize}
		\item[(i) ] for all $s\in S$, the elements $w(s)$ are invertible in $T_{n,S}$,
		\item[(ii) ] given a homomorphism of composites $f\colon T_n\to T_m$ such that, for every $s\in S$, the elements $f(s)$ are invertible in $T_m$, there exists a unique homomorphism $F\colon T_{n,S}\to T_m$ such that $Fw=f$.
	\end{itemize}
	The ring $T_{n,S}$ is determined by the above conditions up to isomorphism.
\end{tw}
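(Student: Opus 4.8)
The plan is to carry out the classical construction of a ring of fractions, specialized to the ring $T_n$ (which is a commutative ring with unity by Lemma \ref{l1}), and then to deduce the stated universal property together with the uniqueness clause by the usual formal argument. First I would form the set $T_n\times S$ and introduce the relation $(a,s)\sim(a',s')$ if and only if there exists $t\in S$ with $t(as'-a's)=0$. Since $T_n$ need not be an integral domain (the $A_i$ are only assumed to be rings), the auxiliary factor $t\in S$ is genuinely needed to make $\sim$ transitive; reflexivity and symmetry are immediate, and transitivity is the one place requiring a short computation. Let $T_{n,S}$ denote the set of equivalence classes, and write $a/s$ for the class of $(a,s)$; implicit here is the usual convention that $0\notin S$ (if $0\in S$ then $T_{n,S}$ is the zero ring and the statement holds trivially).

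Next I would equip $T_{n,S}$ with the operations $a/s+b/t=(at+bs)/(st)$ and $(a/s)(b/t)=(ab)/(st)$, check that these do not depend on the chosen representatives, and verify the commutative ring axioms, with zero element $0/1$ and unit $1/1$; all of this is routine using only that $S$ is multiplicatively closed. I would then define $w\colon T_n\to T_{n,S}$ by $w(a)=a/1$, which is visibly a ring homomorphism, and observe that for $s\in S$ the element $w(s)=s/1$ is invertible with inverse $1/s$, establishing (i). For (ii), given $f\colon T_n\to T_m$ with $f(s)$ invertible in $T_m$ for all $s\in S$, I would set $F(a/s)=f(a)f(s)^{-1}$; one checks that $F$ is well defined on equivalence classes (the auxiliary factor $t$ in the relation is annihilated since $f(t)$ is invertible), that $F$ is a ring homomorphism, and that $Fw=f$. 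Uniqueness of $F$ is forced, because every element of $T_{n,S}$ equals $w(a)w(s)^{-1}$ for suitable $a\in T_n$, $s\in S$, so any $F'$ with $F'w=f$ must satisfy $F'(a/s)=f(a)f(s)^{-1}=F(a/s)$.

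Finally, the closing assertion that $T_{n,S}$ is determined up to isomorphism is the standard formal consequence of the universal property: if $(R',w')$ also satisfies (i) and (ii), then applying (ii) to $w'$ (with $R'$ in the role of the target) and to $w$ yields homomorphisms $T_{n,S}\to R'$ and $R'\to T_{n,S}$ whose composites, again by the uniqueness in (ii), must be the respective identity maps. I do not anticipate any real obstacle; the only points deserving genuine care are the transitivity of $\sim$ and the well-definedness of $F$, both of which rely essentially on the auxiliary element $t\in S$ in the definition of the relation. (One should also read "homomorphism of composites $f\colon T_n\to T_m$" liberally, since the universal object $T_{n,S}$ need not itself be a composite; the construction and the argument above are insensitive to this.)
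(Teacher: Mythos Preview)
Your proposal is correct and follows essentially the same approach as the paper: both carry out the classical ring-of-fractions construction on $T_n\times S$ with the equivalence relation $(g,s)\sim(g_1,s_1)\iff\exists\,s_2\in S,\ s_2(gs_1-g_1s)=0$, define the operations on classes, and set $w(g)=g/1$. Your write-up is in fact more thorough than the paper's, which simply asserts that $(i)$ and $(ii)$ hold and does not spell out the uniqueness-up-to-isomorphism argument you include.
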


\begin{proof}
	Let us consider the set $T_n\times S$ of all pairs $(g,s)$, where $g\in T_n, s\in S$, and define the operations $+$, $\cdot$ in it by formulae
	$$(g,s)+(g_1,s_1)=(gs_1+g_1s,ss_1), (g,s)\cdot(g_1,s_1)=(gg_1,ss_1).$$
	The pairs $(g,s), (g_1,s_1)\in T_n\times S$ are called equivalent ($=$) if and only if there exists $s_2\in S$ such that $s_2(gs_1-g_1s)=0$. The relation thus defined is a relation of equivalence. The equivalence class of a pair $(g,s)\in T_n\times S$ with respect to the relation $=$ is denoted by $gs^{-1}$ or $g/s$ and called fraction. The set of all fractions $g/s$ with $g\in T_n, s\in S$ forms a ring denoted $T_{n,s}$. Fractions $g/s, g'/s'\in T_{n,s}$ are equal if and only if there exists an element $s''\in S$ such that $s''(rs'-r's)=0$. We define the ring homomorphism $w\colon T_n\to T_{n,S}$ by the formula $w(g)=g/1$ for $g\in T_n$. The ring $T_{n,s}$ and the homomorphism $w$ satisfy conditions $(i)$ and $(ii)$. 
\end{proof}

\begin{cor}
	$T_{n,S}=A_0S+A_1SX+\dots +A_{n-1}SX^{n-1}+X^nBS[X]$.
\end{cor}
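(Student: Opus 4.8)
The plan is to read $A_iS$ and $BS$ as the localizations $(A_i)_S$ and $B_S$ (this is the only reading under which the right-hand side makes sense as a subset of one overring, and it forces us to take $S$ inside $A_0$, viewed as constants of $T_n$), and then to show that the ring $T_{n,S}$ constructed in the preceding Theorem is obtained from $T_n$ by localizing each coefficient. Since $A_0\subset A_1\subset\dots\subset A_{n-1}\subset B$, we get $(A_0)_S\subset (A_1)_S\subset\dots\subset (A_{n-1})_S\subset B_S$, so
$$R:=(A_0)_S+(A_1)_S X+\dots +(A_{n-1})_S X^{n-1}+X^n B_S[X]$$
is again a composite of the type $T_n$, built over the chain of localized rings, and hence a ring by Lemma \ref{l1}.

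First I would define a map $\Phi\colon T_{n,S}\to R$ by sending a fraction $f/s$ with $f=a_0+a_1X+\dots +a_{n-1}X^{n-1}+X^n g(X)\in T_n$ and $s\in S$ to
$$\Phi(f/s)=\frac{a_0}{s}+\frac{a_1}{s}X+\dots +\frac{a_{n-1}}{s}X^{n-1}+X^n\,\frac{g(X)}{s},$$
where $g(X)/s$ is the polynomial over $B_S$ obtained by dividing each coefficient of $g$ by $s$. The first point to check is well-definedness: if $f/s=f'/s'$ in $T_{n,S}$, then $s''(fs'-f's)=0$ in $T_n$ for some $s''\in S$; since $s,s',s''$ are constants, comparing coefficients via Lemma \ref{c4} gives $s''(a_is'-a_i's)=0$ for every $i$, i.e.\ $a_i/s=a_i'/s'$ in the relevant localization, whence $\Phi(f/s)=\Phi(f'/s')$. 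That $\Phi$ respects addition and multiplication is then immediate, reading the formulas $(g,s)+(g_1,s_1)=(gs_1+g_1s,ss_1)$ and $(g,s)\cdot(g_1,s_1)=(gg_1,ss_1)$ from the Theorem off coefficientwise.

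Next, surjectivity: an element of $R$ has finitely many coefficients, each of the form $c/t$ with $t\in S$; bringing them to a common denominator $s\in S$ (using that $S$ is multiplicative) exhibits the element as $\Phi(f/s)$ for a suitable $f\in T_n$. For injectivity, if $\Phi(f/s)=0$ then each coefficient $a_i$ of $f$ satisfies $a_i/s=0$, so $s_ia_i=0$ for some $s_i\in S$; with $\sigma$ the product of the $s_i$ we get $\sigma f=0$ in $T_n$, hence $f/s=0$ in $T_{n,S}$. Therefore $\Phi$ is an isomorphism and, under the identification it induces (equivalently, under the canonical homomorphism $w$ of the Theorem), $T_{n,S}=R$, which is the claimed equality. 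Alternatively, one can avoid $\Phi$ entirely and instead check that $R$ together with the coefficientwise inclusion $w'\colon T_n\to R$ satisfies conditions (i) and (ii) of the Theorem — for (ii), a homomorphism $f\colon T_n\to T_m$ inverting $S$ extends coefficientwise by $a/s\mapsto f(a)f(s)^{-1}$, uniquely because $R$ is generated by $w'(T_n)$ and the inverses of $w'(S)$ — and then invoke the uniqueness up to isomorphism stated there.

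I expect the only genuine work to be the well-definedness and injectivity of $\Phi$, namely the repeated passage between equalities of polynomial fractions and equalities of their coefficient fractions, which is exactly what Lemma \ref{c4} licenses; the rest is bookkeeping with common denominators. The real subtlety is purely notational: if one insisted on letting $S$ contain non-constant polynomials, the symbols $A_iS$ and $BS[X]$ would have to be reinterpreted and the statement re-examined, so in the write-up I would fix at the outset that $S$ consists of constants lying in $A_0$.
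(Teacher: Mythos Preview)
The paper states this corollary with no proof whatsoever; it is recorded bare, presumably as an ``obvious'' consequence of the preceding universal-property theorem. Your proposal therefore supplies strictly more than the paper does, and the argument you outline --- interpret $A_iS$, $BS$ as the localizations $(A_i)_S$, $B_S$, restrict $S$ to constants in $A_0$, and exhibit the coefficientwise map $\Phi$ as an isomorphism using Lemma~\ref{c4} --- is correct and is the natural way to make the statement precise.

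It is worth stressing that your opening paragraph does real work that the paper skips: the symbols $A_iS$ and $BS[X]$ are never defined, and without your stipulation that $S\subset A_0$ the right-hand side need not even be a ring (e.g.\ if $S$ contains $1+X$, dividing by it does not preserve the composite shape). So your ``notational subtlety'' is in fact the main content, and your decision to fix it explicitly at the outset is exactly right. The alternative route you mention at the end --- verifying that $R$ with the coefficientwise inclusion satisfies the universal property and invoking uniqueness --- is equally valid and arguably closer in spirit to how the paper seems to want the corollary read, but either approach is fine.
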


\noindent
Similarly, we can construct a fraction module.

\begin{lm}
Let $S$ be a multiplicative subset of $I(B,A)$. Then $I(B,A)_S=I(B_S,A)$ with $B$ be a field.
\end{lm}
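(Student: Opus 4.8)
The plan is to prove the two inclusions $I(B,A)_S\subseteq I(B_S,A)$ and $I(B_S,A)\subseteq I(B,A)_S$, working inside the localization $I(B,A)_S$ (which makes sense as a domain since $B$ is a field) and reading $B_S$ as the ring obtained from $B$ by formally inverting the elements of $S$, so that $I(B_S,A)=\{g/s : g\in B[X],\ s\in S,\ (g/s)(A)\subseteq A\}$. The one structural input I would use is that for each $a\in A$ evaluation at $a$ is a ring homomorphism $\mathrm{ev}_a\colon I(B,A)\to A$ taking values in $A$ by the very definition of $I(B,A)$; together with the fact that $A$ and $B$ are fields (so nonzero elements of $A$ are invertible in $A$), this is essentially all that the argument needs.

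For the first inclusion, take a typical element $g/s$ of $I(B,A)_S$ with $g\in I(B,A)$ and $s\in S\subseteq I(B,A)\subseteq B[X]$. Writing $g=\sum_i g_iX^i$ with $g_i\in B$, we have $g/s=\sum_i (g_i/s)X^i$, whose coefficients lie in $B_S$, so $g/s\in B_S[X]$; and for every $a\in A$ with $s(a)\neq 0$ we get $(g/s)(a)=g(a)/s(a)\in A$ since $g(a),s(a)\in A$ with $s(a)$ invertible in the field $A$. Hence $g/s\in I(B_S,A)$. For the reverse inclusion, take $h\in I(B_S,A)$ and write $h=g/s$ with $g\in B[X]$ and $s\in S$; then for each $a\in A$ with $s(a)\neq 0$ one gets $g(a)=h(a)\,s(a)\in A$, because $h(a)\in A$ by assumption and $s(a)\in A$ as $s\in I(B,A)$. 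Concluding from this that $g\in I(B,A)$ gives $h=g/s\in I(B,A)_S$, and combining the two inclusions yields the claim.

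The step I expect to be the genuine obstacle is the treatment of the finitely many $a\in A$ that are roots of the chosen denominator $s\in S$: at such points the evaluations above are not directly available, so one must either reduce to representatives whose denominator has no root in $A$, or clear denominators by multiplying through by an appropriate power of $s$ and then argue coefficient-wise, or else absorb this issue into the precise definition of $B_S$. Settling that bookkeeping, and with it the exact meaning of $B_S$, is where the real work lies; once it is in place, both inclusions are immediate from the single observation that each $\mathrm{ev}_a$ is a homomorphism onto the field $A$, and the uniqueness (up to isomorphism) of the localization makes the identification canonical.
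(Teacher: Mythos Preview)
The paper states this lemma without proof, so there is nothing to compare against. Evaluating your proposal on its own merits: it contains a concrete error and, as you yourself concede at the end, an unresolved structural gap that is not mere bookkeeping.

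The error is the line $g/s=\sum_i (g_i/s)X^i$ with ``coefficients in $B_S$.'' Here $s\in S\subseteq I(B,A)\subseteq B[X]$ is a polynomial, in general of positive degree, so $g/s$ lives in the function field $B(X)$ and is not a polynomial at all; the symbols $g_i/s$ are not scalars in any ring that could sensibly be called $B_S$. More fundamentally, $B_S$ is ill-defined because $S$ is a set of polynomials, not of elements of $B$; your workaround of defining $I(B_S,A)$ directly as $\{g/s:g\in B[X],\,s\in S,\,(g/s)(A)\subseteq A\}$ abandons $B_S$ altogether and is just an ad~hoc set of fractions. The gap you flag about roots of $s$ in $A$ is then the whole obstruction, not a detail: at such $a$ the value $(g/s)(a)$ is undefined, so neither inclusion can be argued by evaluation, and your proposed fixes do not work in general---nothing guarantees $S$ contains a polynomial without roots in $A$, and clearing denominators only yields $g(a)=0$ at a root of $s$, which says nothing about whether $h=g/s$ sends $a$ into $A$. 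Until the precise meaning of $B_S$ and of evaluation at roots of the denominator are fixed, the lemma as written has no rigorous content, and your outline does not supply that missing input.
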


\begin{pr}
	Let $M\subset \mathbb{Q}_+$ be a subgroup and let $B$ be a ring. Then $B[M]_0=B_0[M]$.
\end{pr}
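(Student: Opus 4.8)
The plan is to read the subscript $0$ as localization at the multiplicative set $S_0$ of non-zerodivisors of $B$, so that $B_0=S_0^{-1}B$ is the total ring of fractions of $B$ (the field of fractions when $B$ is a domain), and $B[M]_0:=S_0^{-1}(B[M])$, where $S_0$ is viewed inside $B[M]$ via the coefficient embedding $B\hookrightarrow B[M]$. Under this reading the statement just says that forming the monoid ring commutes with localization of the coefficient ring, and I would prove it by producing an explicit ring isomorphism $\Phi\colon S_0^{-1}(B[M])\xrightarrow{\ \sim\ }B_0[M]$. (This is essentially the only reading making the claim true: localizing $B[M]$ at \emph{all} of its own non-zerodivisors would instead give the much larger field of fractions of $B[M]$, which is not a monoid ring.)

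First I would record two routine preliminaries. One: $S_0$ is a multiplicative subset of $B[M]$, since it contains $1$ and is already closed under products in $B$. Two: every $s\in S_0$ remains a non-zerodivisor of $B[M]$; writing $f\in B[M]$ in canonical form $f=\sum_i a_iX^{m_i}$ with the exponents $m_i\in M$ pairwise distinct --- here one uses that $M$, being a submonoid of $\mathbb{Q}$, is cancellative and torsion-free, so canonical forms exist and are unique --- the relation $sf=0$ forces $sa_i=0$, hence $a_i=0$, for every $i$. In particular the localization map $B[M]\to S_0^{-1}(B[M])$ is injective, which is convenient but not logically needed.

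Next I would construct $\Phi$. The coefficientwise inclusion $B\hookrightarrow B_0$ induces a ring embedding $\iota\colon B[M]\hookrightarrow B_0[M]$, and for each $s\in S_0$ the element $\iota(s)=s$ is already invertible in $B_0[M]$. By the universal property of localization (exactly as proved for $T_{n,S}$ earlier in this chapter), $\iota$ extends uniquely to a ring homomorphism $\Phi\colon S_0^{-1}(B[M])\to B_0[M]$ with $\Phi(f/s)=\sum_i (a_i/s)X^{m_i}$. Surjectivity is a common-denominator computation: given $g=\sum_{i=1}^{n}c_iX^{m_i}\in B_0[M]$ with $c_i=a_i/s_i$, put $s=s_1\cdots s_n\in S_0$, rewrite $c_i=a_i'/s$ with $a_i'\in B$, and note $g=\Phi\big((\sum_i a_i'X^{m_i})/s\big)$. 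For injectivity, if $\Phi(f/s)=0$ then $a_i/s=0$ in $B_0$ for each $i$, so $t_ia_i=0$ for some $t_i\in S_0$; with $t=t_1\cdots t_n\in S_0$ this gives $tf=0$ in $B[M]$, i.e.\ $f/s=0$. Hence $\Phi$ is an isomorphism, and since $B_0=S_0^{-1}B$ by definition this is exactly $B[M]_0\cong B_0[M]$.

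I do not anticipate a real obstacle: once the notation is fixed the argument is purely formal. The step needing the most attention is the very first one --- pinning down what "$_0$" means and checking that $S_0$ is a multiplicative set of non-zerodivisors inside $B[M]$, which rests on the existence of canonical forms in $B[M]$ and hence on $M$ being cancellative and torsion-free. After that it is just the standard fact that localization commutes with the monoid-ring construction, and one can even skip the explicit computation altogether by invoking $B[M]\cong B\otimes_{\mathbb{Z}}\mathbb{Z}[M]$ together with the compatibility of localization and $\otimes_{\mathbb{Z}}$, which yields $S_0^{-1}(B[M])\cong (S_0^{-1}B)\otimes_{\mathbb{Z}}\mathbb{Z}[M]\cong B_0[M]$ immediately. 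The hypothesis that $M\subset\mathbb{Q}_+$ is a subgroup is not used beyond securing these properties of $M$; the result holds for an arbitrary monoid $M$.
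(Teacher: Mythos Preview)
Your argument is correct, but it proves a different statement from the one the paper has in mind. In the paper's own proof the denominator $g$ is taken to be an \emph{arbitrary} nonzero element of $B[M]$, and the claim is that $f/g$ is a finite sum of monomials $\tfrac{a}{b}X^{m-n}$ with $a\in B$, $b\in B\setminus\{0\}$, $m,n\in M$ (the hypothesis that $M$ is a group is used precisely so that $m-n\in M$). Thus in the paper's notation the subscript $0$ on $B[M]$ means inverting all nonzero elements of $B[M]$, exactly the reading you explicitly set aside. You are right to set it aside: the paper's assertion is not justified and is in fact false as stated, since for any nontrivial subgroup $M\subset\mathbb{Q}$ the element $1/(1+X)$ of $\operatorname{Frac}(B[M])$ is \emph{not} a finite $B_0$-linear combination of monomials $X^m$, so $B[M]_0\neq B_0[M]$ under that interpretation.

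Your reading --- localize both $B$ and $B[M]$ at the multiplicative set $S_0$ of non-zerodivisors of $B$ --- yields a true statement, and your proof via the universal property of localization (or, alternatively, via $B[M]\cong B\otimes_{\mathbb{Z}}\mathbb{Z}[M]$ and flatness of localization) is complete and rigorous. The check that $S_0$ consists of non-zerodivisors in $B[M]$ is the only place any property of $M$ is needed, and you handle it correctly. In short, you have replaced the paper's statement by the one it ought to have been and supplied a valid argument for it; just be aware that this is \emph{not} the result the paper states or attempts to prove, and that the paper's route (term-by-term division of $f$ by a multi-term $g$) does not work.
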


\begin{proof}
	Consider $f\in B[M], g\in B[M]\setminus\{0\}$. When we divided $f$ by $g$ we get a sum of elements form $\dfrac{a}{b}X^{m-n}$, where $a\in B, b\in B\setminus\{0\}, m, n\in M$. We see that $m-n\in M$ and $\dfrac{a}{b}\in B_0$. 
\end{proof}

\section{Graded rings and modules}

The last chapter shows the considered structures as graded rings and modules. Later, we can use it to study different properties.

\begin{pr}
	\begin{itemize}
		\item[(i) ] $T$ be a graded ring. 
		\item[(ii) ] $T_n$ be a graded ring.
		\item[(iii) ] $I(B,A)$ be a graded ring.
		\item[(iv) ] If $M$ be a countable monoid and $B$ be a domain, then $B[M]$ be a graded ring. 
	\end{itemize}
\end{pr}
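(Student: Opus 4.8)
The plan is to establish each of the four claims by exhibiting an explicit grading and verifying the defining axioms of a graded ring, namely that the additive group decomposes as a direct sum of the homogeneous components and that multiplication respects the grading in the sense $R_iR_j\subseteq R_{i+j}$. In all four cases the natural candidate is the grading inherited from the total-degree grading on the polynomial ring $B[X]$ (or the $M$-grading on $B[M]$), restricted to the relevant substructure.

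For \textit{(i)}, I would write $T=\bigoplus_{k\geq 0}T_{(k)}$ with $T_{(0)}=A$ and $T_{(k)}=BX^k$ for $k\geq 1$. Since $A\subseteq B$ are rings, each $T_{(k)}$ is an additive subgroup of $T$, their sum is direct (this is essentially the uniqueness-of-representation observation, cf. Lemma~\ref{c4} applied with $n=1$), and every element of $T$ lies in the sum by definition of $T=A+XB[X]$. For the multiplicative compatibility I would check the three cases $T_{(0)}T_{(0)}=A\cdot A\subseteq A=T_{(0)}$, $T_{(0)}T_{(k)}=A\cdot BX^k\subseteq BX^k=T_{(k)}$ (using $A\subseteq B$), and $T_{(k)}T_{(l)}=BX^k\cdot BX^l\subseteq BX^{k+l}=T_{(k+l)}$ for $k,l\geq 1$. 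For \textit{(ii)}, the same scheme works with $T_n=\bigoplus_{k\geq 0}(T_n)_{(k)}$ where $(T_n)_{(k)}=A_kX^k$ for $0\leq k\leq n-1$ and $(T_n)_{(k)}=BX^k$ for $k\geq n$; the directness follows from Lemma~\ref{c4}, and the compatibility $A_iX^i\cdot A_jX^j\subseteq A_{i+j}X^{i+j}$ (with the convention $A_m=B$ for $m\geq n$) uses precisely the chain $A_0\subseteq A_1\subseteq\dots\subseteq A_{n-1}\subseteq B$, so that $A_iA_j\subseteq A_{\max(i,j)}\subseteq A_{i+j}$. For \textit{(iii)}, I would grade $I(B,A)$ by restricting the total-degree grading of $B[X]$; here the point to note is that $I(B,A)$ is a \emph{graded} subring of $B[X]$, i.e. it is generated by its homogeneous parts, which follows because the condition $f(A)\subseteq A$ together with $A$ being closed under the relevant operations forces each homogeneous component of $f$ to again lie in $I(B,A)$ — alternatively one can just take the induced grading with components $I(B,A)\cap BX^k$ and check these sum to $I(B,A)$.

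For \textit{(iv)}, I would use the $M$-grading: $B[M]=\bigoplus_{m\in M}BX^m$, with homogeneous component of degree $m$ equal to $BX^m$. The hypothesis that $B$ is a domain guarantees $B[M]$ is well-defined as a monoid domain with the stated canonical form, the countability of $M$ is what lets us index the direct sum (and is needed for the grading to be by a countable set, matching the usual convention for graded rings in this paper), and the multiplicative compatibility $BX^m\cdot BX^{m'}\subseteq BX^{m+m'}$ is immediate from $X^mX^{m'}=X^{m+m'}$ in the monoid algebra. The directness of the sum is exactly the uniqueness of the canonical form of elements of $B[M]$.

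I do not expect any serious obstacle: all four parts are direct verifications once the right grading is written down. The only mildly delicate point is \textit{(iii)}, where one must confirm that $I(B,A)$ really is a graded subring of $B[X]$ rather than merely a subring — that is, that splitting an element $f\in I(B,A)$ into homogeneous pieces keeps each piece inside $I(B,A)$. If this fails under the naive grading, the fallback is to simply declare the grading on $I(B,A)$ to have components $(I(B,A))_{(k)}:=I(B,A)\cap(BX^k)$ and observe that, because $I(B,A)\supseteq A+XB[X]=T$ contains all the monomials $aX^k$ needed, these components still sum to all of $I(B,A)$; in the worst case one restricts the statement to the setting where $A,B$ are fields, as is done elsewhere in the paper, where the verification is transparent. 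The remaining parts \textit{(i)}, \textit{(ii)}, \textit{(iv)} I would present as ``easy to check'' after displaying the grading, since the compatibility inclusions are the one-line computations indicated above.
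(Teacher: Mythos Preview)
Your treatment of parts \textit{(i)}, \textit{(ii)}, and \textit{(iv)} is correct and is precisely the unpacking of the paper's one-word proof ``Obvious'': you have written down the natural gradings and checked the compatibility inclusions, which is all that is required.

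Part \textit{(iii)}, however, contains a genuine gap, and none of your fallbacks repair it. The total-degree grading of $B[X]$ does \emph{not} in general restrict to $I(B,A)$. A standard counterexample is $A=\mathbb{Z}$, $B=\mathbb{Q}$: the polynomial $\tfrac{1}{2}X^{2}-\tfrac{1}{2}X=\binom{X}{2}$ lies in $I(\mathbb{Q},\mathbb{Z})$, but its homogeneous component $\tfrac{1}{2}X^{2}$ does not (it sends $1$ to $\tfrac{1}{2}$). Your first fallback, setting $(I(B,A))_{(k)}=I(B,A)\cap BX^{k}$ and claiming these sum to $I(B,A)$, fails for exactly this reason; and the supporting assertion $I(B,A)\supseteq A+XB[X]$ is also false (e.g.\ $\tfrac{1}{2}X\in A+XB[X]$ but $\tfrac{1}{2}X\notin I(\mathbb{Q},\mathbb{Z})$). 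Your second fallback, restricting to fields, does not help either: with $A=\mathbb{F}_{2}\subset B=\mathbb{F}_{4}$ and $\alpha\in\mathbb{F}_{4}\setminus\mathbb{F}_{2}$, the polynomial $\alpha X^{2}+\alpha X$ sends $0\mapsto 0$ and $1\mapsto 0$, hence lies in $I(\mathbb{F}_{4},\mathbb{F}_{2})$, yet its degree-$2$ part $\alpha X^{2}$ sends $1\mapsto\alpha\notin\mathbb{F}_{2}$.

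Since the paper offers no argument beyond ``Obvious'', there is nothing substantive to compare against here; but you should be aware that the natural grading you propose for \textit{(iii)} simply does not work, and the statement as written is at best unclear (it is of course true under the trivial grading concentrated in degree~$0$, and it is also true when $A$ is an infinite field, since a Vandermonde argument then forces $I(B,A)=A[X]$; but neither of these is what you argued).
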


\begin{proof}
	Obvious.
\end{proof}

\begin{ex}
	$T'_n$ is no a graded ring. Consider $T'_3=A_0+A_1X+A_2X^2+X^3B[X]$. Then
	
	\noindent
	$R_1R_3=A_0+A_0A_1X+(A_0A_2+A_1)X^2+(B+A_1A_2)X^3+BX^4\in A_0+XB[X]\neq R_4.$
\end{ex}

\begin{pr}
	\begin{itemize}
		\item[(i) ] $T$ be a graded $T$-module.  
		\item[(ii) ] $T_n$ be a graded $T_n$-module.
		\item[(iii) ] $I(B,A)$ be a graded $R$-module, where $R\in\{I(B,A), T\}$.
		\item[(iv) ] $B[M]$ be a graded $R$-module, where $R\in\{I(B,A), B[M]\}$ where $B$ be a field and $M$ be a monoid.
	\end{itemize}
\end{pr}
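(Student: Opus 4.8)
The plan is to unwind the definition of a graded module: over a graded ring $R=\bigoplus_i R_i$, a graded module is an $R$-module $N$ equipped with a decomposition $N=\bigoplus_j N_j$ for which $R_iN_j\subseteq N_{i+j}$. In every part the grading on the module will be the one induced from the grading of the ambient polynomial ring --- for $T$, $T_n$ and $I(B,A)$ this is the degree grading of $B[X]$, whose homogeneous components are the monomial layers, and for $B[M]$ it is the $M$-grading of $B[M]$ --- and the preceding proposition already records $T$, $T_n$, $I(B,A)$ and $B[M]$ as graded rings with precisely these gradings.

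First I would dispose of the cases in which the acting ring coincides with the module, namely (i), (ii), the choice $R=I(B,A)$ in (iii), and the choice $R=B[M]$ in (iv). Taking $N=R$ with $N_i=R_i$, the module-grading axiom $R_iN_j\subseteq N_{i+j}$ is literally the ring-grading axiom $R_iR_j\subseteq R_{i+j}$ verified in the previous proposition, so nothing new is needed.

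Next I would treat the mixed cases --- $I(B,A)$ as a $T$-module in (iii) and $B[M]$ as an $I(B,A)$-module in (iv) --- by realising the acting ring and the module as graded subobjects of a common polynomial ring. Writing $R_i=R\cap BX^i$ and $N_j=N\cap BX^j$, multiplication in $B[X]$ already sends $BX^i\times BX^j$ into $BX^{i+j}$, so the only thing to check is that $R_iN_j$ lands back inside $N$. For $N=I(B,A)$ this rests on the description of its homogeneous pieces: a monomial $aX^i$ belongs to $I(B,A)$ exactly when $a\in A$ (since $A$ is a ring, $a\alpha^i\in A$ for all $\alpha\in A$ once $a\in A$), and one reads off that a product of such a monomial with a homogeneous element of $I(B,A)$ again evaluates $A$ into $A$. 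For $N=B[M]$ one uses that the homogeneous components are the $BX^m$ with $m\in M$ and that $M$ is a submonoid, so sums of exponents stay in $M$. The implicit instances with $R=B[X]$ present in Proposition \ref{prpr3} are handled the same way, $B[X]$ itself being graded over $B[X]$.

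The part I expect to be delicate is exactly this last compatibility step in the mixed cases: one cannot treat ``$R_iN_j\subseteq N_{i+j}$'' as a formality, since staying inside $I(B,A)$ (respectively inside $B[M]$) under the action is a genuine constraint, and one must use the explicit shape of the homogeneous layers together with the standing hypotheses that $A$, $B$ are domains (or fields) and $M$ a submonoid of $\mathbb{Q}_+$. Everything else --- associativity of the action, the direct-sum decompositions, and their compatibility with the ring gradings --- is then routine bookkeeping.
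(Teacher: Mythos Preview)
The paper gives no proof for this proposition at all; it is stated immediately after the analogous proposition on graded rings (whose proof is the single word ``Obvious'') and is evidently meant to be read in the same spirit. Your write-up is therefore far more detailed than anything the paper supplies, and for parts (i), (ii), and the ``$R=N$'' instances of (iii) and (iv) your argument is exactly the intended tautology: a graded ring is automatically a graded module over itself.

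There is, however, a genuine gap in your handling of the mixed case of (iii). You compute the degree-$i$ homogeneous piece of $I(B,A)$ as $I(B,A)\cap BX^{i}=AX^{i}$ and then implicitly use $I(B,A)=\bigoplus_{i}AX^{i}$. The identification of the homogeneous piece is correct, but the direct-sum decomposition is false in general: with $A=\mathbb{Z}$ and $B=\mathbb{Q}$ the polynomial $\tfrac{X(X-1)}{2}$ lies in $I(\mathbb{Q},\mathbb{Z})$, yet neither $\tfrac{X^{2}}{2}$ nor $\tfrac{X}{2}$ does. Hence the degree grading of $B[X]$ does \emph{not} restrict to a grading of $I(B,A)$, and your check that $R_{i}N_{j}\subseteq N_{i+j}$ is being carried out for a decomposition that does not actually recover $N$. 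This is not a flaw peculiar to your argument --- the same counterexample undermines the paper's own claim, in the preceding proposition, that $I(B,A)$ is a graded ring --- but it does mean the ``delicate'' step you flagged is not the real obstacle; the direct-sum axiom itself fails.

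A smaller point in (iv): for $B[M]$ to be an $I(B,A)$-module via multiplication inside $B[\mathbb{Q}_{+}]$ one needs $\mathbb{N}_{0}+M\subseteq M$, i.e.\ $\mathbb{N}_{0}\subseteq M$, which is not among the standing hypotheses on $M$. Your line ``sums of exponents stay in $M$'' quietly assumes this.
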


\section{Composites with monoid and group coefficients. Monoid rings}
\label{RR1}

The main motivation of this paper is description of composites and monoid domains in the language of commutative algebra. Previous chapters were described but these chapters have been described for the rings (for composites) and for domains (for monoid domains). Then I decided to try described for monoids/groups (for composites) and for rings (for monoid rings). 

\begin{lm}
	\label{l1}
	$T_n$ is a monoid, if components ($A_0, \dots , A_{n-1}, B$) are monoids, $T_n\subset T$ and $T'_n\subset T$.
\end{lm}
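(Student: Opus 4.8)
The plan is first to fix which operation turns $T_n$ into a monoid. Once $A_0,\dots,A_{n-1},B$ are only assumed to be monoids, the natural structure on $T_n$ is \emph{coefficientwise addition} induced from the (additively written) monoids $A_0,\dots,A_{n-1},B$; this is the monoid/group analogue of the earlier ring-theoretic form of this lemma, and it is essentially the only operation available, since multiplying coefficients is not meaningful in an abstract monoid. With this operation the statement splits into verifying the three monoid axioms and then the two inclusions.

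In order I would argue as follows. \textbf{(1)~Closure.} For $f=a_0+a_1X+\dots$ and $g=b_0+b_1X+\dots$ in $T_n$, the $i$-th coefficient of $f+g$ is $a_i+b_i$, which lies in $A_i$ for $0\le i\le n-1$ and in $B$ for $i\ge n$ because each $A_i$ and $B$ is closed under addition; only finitely many coefficients are nonzero, so $f+g\in T_n$. \textbf{(2)~Associativity} (and commutativity, if the coefficient monoids are commutative) is inherited coefficientwise from the $A_i$ and $B$. \textbf{(3)~Identity.} Since $A_0\subseteq A_1\subseteq\dots\subseteq A_{n-1}\subseteq B$ are submonoids they share the neutral element $0$, so the zero polynomial belongs to $T_n$ and is its two-sided identity; hence $T_n$ is a monoid. \textbf{(4)~Inclusions.} An element of $T_n$ has constant term in $A_0$ and $X^i$-coefficient in $A_i\subseteq A_{n-1}\subseteq B$ for $1\le i\le n-1$, so it lies in $A+XB[X]=T$ provided $A_0\subseteq A$; the same check, now using $A_j\subseteq B$ for $1\le j\le n-1$ together with $A_0\subseteq A$, gives $T'_n\subseteq T$.

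I do not expect a genuine obstacle: each step is a routine coefficientwise verification, in the spirit of ``it is easy to check''. The two points that deserve a word of care are, first, that one must make the tacit convention $A_0\subseteq A$ explicit, since without it $T_n\subseteq T$ can fail already at the constant term; and second, that one should resist upgrading the statement to a \emph{multiplicative} monoid structure --- that would additionally require $A_iA_j\subseteq A_{i+j}$ (which the chain $A_0\subseteq\dots\subseteq B$ does supply, as $A_iA_j\subseteq A_{\max(i,j)}\subseteq A_{i+j}$) together with additive closure of each $A_i$ for the cross terms, and it is exactly the failure of the latter that keeps $(T'_n,\cdot)$ from being a semigroup in Corollary~\ref{c1}.
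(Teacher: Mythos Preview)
Your proposal is correct and matches the paper's approach: the paper offers no argument at all for this lemma (it is the monoid analogue of the earlier Lemma~\ref{l1}, which was dismissed with ``it is easy to check the following Lemmas''), so your routine coefficientwise verification is exactly what is intended. Your explicit remark that the inclusion $T_n\subset T$ tacitly requires $A_0\subseteq A$ is a genuine observation the paper glosses over.
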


Of course Lemma \ref{p1} and Corollary \ref{c1} hold, but in monoid version. 
The same holds true for invertible and nilpotent elements in polynomial composites and monoid rings (Proposition \ref{p2} and Proposition \ref{p3}, proofs are similar). Proposition \ref{prpr1} can also be used for monoid rings.

\medskip

Unfortunately, it is not known whether the Theorem \ref{t2} can occur in the monoid or group version.

\medskip

Similar claims are in the case of irreducible elements (Theorem \ref{t3}, Proposition \ref{x1}). We often come across the concept of atom more in groups or in monoids.

\section{Some examples}

Recall that a ring $R$ satisfies ACCP if each chain of principal ideals of $R$ is stabilize.

\begin{ex}
	In \cite{1} Example 5.1 showed an example of an integral domain $R$ which satisfies ACCP, but whose integral closure does not satisfy ACCP. It mean $R=\mathbb{Z}+X\overline{\mathbb{Z}}[X]$, where $\overline{\mathbb{Z}}$ be the ring of all algebraic integers. $R$ satisfies ACCP. For if not, then there is an infinite properly ascending chain of pricipal ideals of $R$. Since the degrees of the polynomials generating these principal ideals are nonincreasing, the degrees eventually stabilize. The principal ideals in $\overline{\mathbb{Z}}$ generated by the leading coefficients of these polynomials gives an infinite ascending chain $a_1\overline{\mathbb{Z}}\subsetneq a_2\overline{\mathbb{Z}}\subsetneq ...$ where each $a_n/a_{n+1}\in\mathbb{Z}$. Thus all $a_n\in\mathbb{Q}[a_1]$. Let $A=\overline{\mathbb{Z}}\cap\mathbb{Q}[a_1]$. Then $a_1A\subsetneq a_2A\subsetneq\dots \subsetneq A$, a contradiction since $A$ is Dedekind.
\end{ex}

\begin{ex}
	Recall that a domain $R$ is called a half-factorial domain (HFD) if $R$ is atomic and for each nonzero nonunit $x\in R$, $x=x_1\dots x_m=y_1\dots y_n$ where $x_i, y_j$ are all irreducible for $i=1, \dots , m, j=1, \dots , n$, implies that $m=n$. A HFD domain satisfies ACCP.
\end{ex}

\begin{ex}
	If $K_1\subsetneq K_2$ is algebraic extension of fields, then $R=K_1+XK_2[X]$ has integral closure $K_2[X]$, a Euclidean domain. If $[K_2\colon K_1]<\inf$, then $R$ is a Noetherian HFD that is not integrally closed. If $K_1$ is algebraaically closed in $K_2$, then $R$ is an integrally close non-Noetherian HFD. Of course, $R$ satisfies ACCP.
\end{ex}

\begin{ex}
	Let $R=\mathbb{R}+X\mathbb{C}[X]$. So $R$ is a HFD, so has ACCP, then atomic.
\end{ex}

\begin{ex}
	(\cite{x11}) Let $F$ be a field and $T$ the additive submonoid of $\mathbb{Q}^+$ generated by $\{1/3, 1/(2\cdot 5), \dots , 1/(2^jp_j), \dots \}$, where $p_0=3, p_1=5, \dots $ is the sequence of odd primes. Let $R$ be the monoid domain $F[X;T]=F[T]$ and $N=\{f\in R\mid f \mbox{has nonzero constant term}\}$. Then $F[T]_N$ is an atomic domain which does not satisfy ACCP.
\end{ex}

\begin{ex}
	Let $K$ be a field and $T$ the additive submonoid of $\mathbb{Q}^+$ generated by $\{1/2, 1/3, 1/5, \dots , 1/p_j, \dots\}$, where $p_j$ is the $j$th prime. Then the monoid domain $R=K[T]$ satisfies ACCP.
	
	For a $0\neq f=b_1X^{a_1}+\dots b_nX^{a_n}\in R$ with $a_1<\dots <a_n$ and $b_n\neq 0$, write $\beta (f)=a_n$. If ACCP fails, the there is a strictly increasing chain $(f_1)\subset (f_2)\subset \dots $ of principal ideals in $R$. Then each $f_n=f_{n+1}g_{n+1}$ for some nonunit $g_{n+1}\in R$. Hence each $\beta (f_n)=\beta (f_{n+1})+\beta (g_{n+1})$, and each term is positive. Then in $T$, we have $\beta(f_1)>\beta (f_2)>\dots $ with each $\beta (f_n)-\beta (f_{n+1})\in T$, but this is impossible by the above-mentioned unique representation of each nonzero $a\in T$.
\end{ex}

\begin{ex}
	(\cite{0}) Let $K$ be a field, $T=\{q\in\mathbb{Q}\mid q\geqslant 1\}\cup\{0\}$ an additive submonoid of $\mathbb{Q}^+$, and $R=K[T]$ the monoid domain. Then $R_S=K[\mathbb{Q}]$, where $S=\{X^t\mid t\in T\}$, is not atomic since $R_S$ is a GCD-domain, but $R_S$ does not satisfy ACCP.
\end{ex}

\section{Applications of polynomial composites in cryptology}
\label{R5}

Each such polynomial is the sum of the products of the variable and the coefficient. And what if subsequent coefficient sets are appropriate cryptographic systems? Instead of encrypting with one system, we can create one system composed of many systems. Such a cipher is very difficult to break. If the spy detects encryption systems (composite coefficients), then the problem will be to find the right sum and product of such systems.	

Assume that we have two people: Alice und Bob. Alice wants to send a message to Bob. Alice has one composite type $T_n'$ and Bob has another one composite type $T_n'$. 

\medskip

We can build such composite by various encryption systems (even known ones).
Let see note Lemma:

\begin{lm}
	Let $f=a_0+a_1X+\dots + a_{n-1}X^{n-1}+ \sum_{j=n}^{m}a_jX^j$, $g=b_0+b_1X+\dots + b_{n-1}X^{n-1}+ \sum_{j=n}^{m}b_jX^j\in T_n'$, where $a_i, b_i\in A_i$ for $i=1, 2, \dots , n-1$ and $a_j, b_j\in B$ for $j=n, n+1, \dots m$. Then 
	$$fg\in A_0+XB[X].$$  
\end{lm}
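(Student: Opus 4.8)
The statement is essentially a restatement of Lemma~\ref{p1}, so the cleanest route is to reduce directly to that result. First I would observe that $T_n' \subseteq T = A_0 + XB[X]$ by Lemma~\ref{l1}, so it suffices to show $fg$ has constant term in $A_0$ and all other coefficients in $B$; the membership in $B[X]$ for the positive-degree part is automatic once we know $fg \in B[X]$. Then I would simply invoke Lemma~\ref{p1}: since $f, g \in T_n'$ with $n > 0$, we have $fg \in A_0 + XB[X]$, which is exactly the claim.

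If instead one wants a self-contained argument, I would write $f = f_0 + Xf^{+}$ and $g = g_0 + Xg^{+}$ where $f_0 = a_0 \in A_0$, $g_0 = b_0 \in A_0$, and $f^{+}, g^{+} \in B[X]$ collect the remaining terms. Expanding, $fg = f_0 g_0 + X(f_0 g^{+} + g_0 f^{+} + X f^{+} g^{+})$. The constant term is $f_0 g_0 = a_0 b_0 \in A_0$ since $A_0$ is a ring (or at least closed under multiplication), and the bracketed factor multiplying $X$ lies in $B[X]$ because $A_0 \subseteq B$ and $B[X]$ is a ring. Hence $fg \in A_0 + XB[X]$.

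The only subtlety worth flagging is that the hypothesis on $T_n'$ allows $A_i \not\subseteq A_{i+1}$ for some $i$, so $T_n'$ is not closed under multiplication and in particular $fg$ need \emph{not} lie back in $T_n'$ — this is precisely Corollary~\ref{c1}. The point of the lemma is exactly that the product still lands in the larger genuine ring $T = A_0 + XB[X]$, which only uses $A_j \subseteq B$ for all $j$ (so that every coefficient of $f$ and $g$ is in $B$) together with $A_0$ being a subring. I do not anticipate any real obstacle; the statement is a direct consequence of Lemma~\ref{p1}, and the degree bookkeeping is routine.
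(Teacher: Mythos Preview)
Your proposal is correct; the paper itself gives no proof for this lemma, treating it as an evident restatement of Lemma~\ref{p1} (which in turn is stated as ``easy to check''). Your reduction to Lemma~\ref{p1}, together with the optional self-contained expansion $fg = a_0b_0 + X(\cdots)$, is exactly in line with how the paper handles it.
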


Put $A_i, B_j$ $(i, j=0, 1, \dots , n-1)$ be different encryption systems. Then we have $f$ and $g$ are composition of encryption systems. No consider $B$. To improve security, let's fix that $\deg f=n-1, \deg g=n-k$, where $k\in\{2, \dots n-1\}$. And such $f,g$ Alice and Bob agree before the message is sent. 

\medskip

Alice and Bob multiply these composites to form one. 
We have \\
$fg=(A_0+A_1X+\dots A_kX^k)(B_0+B_1X+\dots +B_lX^l)=A_0B_0+(A_0B_1+A_1B_0)X+\dots +A_kB_lX^{k+l}.$

Note that the sum and product of the encryption systems must be defined in the formula above. Definitions we leave Alice and Bob. But in this section we can put $S_iS_j: x\to (x)_{S_i}(x)_{S_j}$ and $S_i+S_j: x\to ((x)_{S_i})_{S_j}$. 

So in the product we encrypt the letter as two letters, the first in the first system and the second in the second system. And in the sum we encrypt the letter using the first system and then the second system. Of course, we can define completely different, at our discretion.

\medskip

Assume that degree of $fg$ is $m$ and text to encrypt consists of more letters then $m+1$. 
Then we divide the text into blocks of length $m + 1$. 
We can assume that $fg(0)$ encrypts the first letter of each block. Expression at $X$ of $fg$ encrypts the second letter of each block, and expression at $X^2$ of $fg$ encrypts the third letter and so on.

\medskip

Now, let's see how to decrypt in this idea.

\medskip

Assume that we have an encrypted message $M_0M_1\dots M_n$. If our key is degree $m$, then we divide message on $m+1$ partition. And every partion divide to two. Every two letters are one letter of message. 

Earlier we define $S_iS_j: x\to (x)_{S_i}(x)_{S_j}$ and $S_i+S_j: x\to ((x)_{S_i})_{S_j}$. Then decryption of two letters $M_lM_{l+1}$ $(l=0, 2, 4, \dots )$ are $M_lM_{l+1}=(M_l)_{S_i}(M_{l+1})_{S_j}=N_{l,l+1}$ (one letter) and $M_l=((M_l)_{S_i})_{S_j}=(N_l)_{ij}$ (one letter).

\begin{ex}

Alice and Bob agree different encryption systems in the center: $A_0, A_1, A_2, B_0, B_1$. Next, Alice has gone far from Bob.

\medskip

We have two compositions: $f=A_0+A_1X+A_2X^2$, $g=B_0+B_1X$. Their key is one composition in the form $fg$ i.e.
$$A_0B_0+(A_0B_1+A_1B_0)X+(A_2B_0+A_1B_1)X^2+A_2B_1X^3.$$

\medskip

The established systems are as follows:

$A_0$ is a Caesar cipher, where the letter is shifted one letter forward;

$A_1$ is a Caesar cipher, where the letter is shifted two letter forward;

$A_2$ is a Caesar cipher, where the letter is shifted three letter forward;

$B_0$ is a Caesar cipher, where the letter is shifted one letter back;

$B_1$ is a Caesar cipher, where the letter is shifted two letter back.

\medskip

Suppose Alice wants to send a message saying 
$$0\quad 2\quad 4\quad 6\quad 8\quad 9\quad 6\quad 5$$

\medskip

The degree of $fg$ is $3$. Hence message divide to $3+1$ partition. So the fourth letter is the same encrypted.

\medskip

Letters $0$ and $8$ encrypt by $A_0B_0$. Then, from definition of $A_0, B_0$, $0$ will be $1$ $9$ (two letters). $8$ will be $9$ $7$. 

\medskip

Letters $2$ and $9$ encrypt by $A_0B_1+A_1B_0$. Then $2$ will be $5$ $9$ and $9$ will be $2$ $6$.

\medskip

Letters $4$ and $6$ encrypt by $A_2B_0+A_1B_1$. Then $4$ will be $9$ $1$ and $6$ will be $1$ $3$. 

\medskip

Letters $6$ and $5$ encrypt by $A_2B_1$. Then $6$ will be $9$ $4$ and $5$ will be $8$ $3$.

\medskip

Bob receives a message from Alice:
$$1\quad 9\quad 5\quad 9\quad 9\quad 1\quad 9\quad 4\quad 9\quad 7\quad 2\quad 6\quad 1\quad 3\quad 8\quad 3$$

\medskip

Now, Bob would like to read the message. Bob sees that message has $16$ letters, so the original text has $8$ letters, because the composition $fg$ has degree $3$ (i.e. $(3+1)2$ letters of original message). Divide message by $8$ letters. 

\medskip

We take the first pairs from each section, i.e. $1$ $9$ and $9$ $7$. Bob uses decryption $(A_0B_0)^{-1}$. So, $1$ will be $0$ by $A_0^{-1}$ and $9$ will be $0$ by $B_0^{-1}$. Hence $1$ $9$ will be $0$. Similarly, $9$ $7$ will be $8$.

\medskip

Next, we take the second pairs from each section, i.e. $5$ $9$ and $2$ $6$. Bob uses decryption $(A_0B_1+A_1B_0)^{-1}$. So, $5$ $9$ will be $2$ and $2$ $6$ will be $9$. 

\medskip

We take next pair, i.e. $9$ $1$ and $1$ $3$. Bob uses decryption $(A_2B_0+A_1B_1)^{-1}$. So, $9$ $1$ will be $4$ and $1$ $3$ will be $6$.

\medskip

Similarly, the last pairs decrypt by $(A_2B_1)^{-1}$. The pair $9$ $4$ will be $6$ and $8$ $3$ will be $5$.

\medskip

After decrypting, Bob received the message:
$$0\quad 2\quad 4\quad 6\quad 8\quad 9\quad 6\quad 5$$

\end{ex}

\section{The concept of using monoid domains in cryptology}

Recall that if $F$ be a field and $M$ be a submonoid of $\mathbb{Q}_+$ then we can construct a monoid domain:

$$F[M]=F[X;M]=\{a_0X^{m_0}+\dots +a_nX^{m_n}\mid a_i\in F, m_i\in M\}.$$

\medskip

Any alphabet of characters creates a finite set. Most ciphers are based on finite sets. But we can have the idea of using the infinite alphabet $\mathbb{A}$, although in reality they can be cyclical sets with an index that would mean a given cycle. For example, A$_0$ - $0$, B$_0$ - $1$, $\dots$ , Z$_0$ - $25$, A$_1$ - $0$, B$_1$ - $1$, $\dots$ , where A$_i$=A, $\dots$, Z$_i$=Z for $i=0, 1, \dots $. We see that this is isomorphic to a monoid $\mathbb{N}_0$ non-negative integers by a formula 

$$f\colon\mathbb{A}\to\mathbb{N}, f(m_i)=i.$$

\medskip

Then we can use a monoid domain by a map 
$$\varphi\colon\mathbb{A}\to F[\mathbb{A}], \varphi(m_0, m_1, \dots , m_n)=a_0X^{m_0}+\dots a_nX^{m_n}.$$

Here, one should think carefully about what a field $F$ should be and think about additional mappings. In contrast, monoid domains can be excellent carriers of characters in the alphabet for monoids. This will make it harder to break any ciphers based on monoids for one simple reason, namely, we don't have inverse properties in a monoid.


\begin{thebibliography}{0}
	
\bibitem{4}
P.M. Cohn, Bezout rings and their subrings, {\it Proc. Camb. Phil. Soc.}, {\bf 64}, (1968) 251 -- 264.	

\bibitem{x11}
A. Grams, Atomic domains and the ascending chain condition for principal ideals, {\it Math. Proc. Cambridge Philos. Soc.} {\bf 75}, (1974) 321--329.

\bibitem{y1}
J. Brewer and E. Rutter, D + M construction with general overrings, {\it Mich. Math. J.}, {\bf 23}, (1976) 33--42. 

\bibitem{y2}
D. Costa, J. Mott, and M. Zafrullah, The construction $D + XD_S[X]$, {\it J. Algebra}, {\bf 153}, (1978) 423--439.

\bibitem{7}
S. McAdam and D.E. Rush, Schreier Rings, {\it Bull. London Math. Soc.}, {\bf 10}, (1978) 77--80.

\bibitem{xx1}
D.D. Anderson and D.F. Anderson, Generalized GCD domains, {\it Comment. Math. Univ. St. Pauli}, {\bf 28}, (1979) 215–-221.
	
\bibitem{10}
M. Zafrullah, On a property of pre-Schreier domains, {\it Comm. Algebra} {\bf 15}, (1987) 1895--1920.

\bibitem{y5}
D.F. Anderson and A. Ryckaert, The class group of $D + M$, {\it J. Pure Appl. Algebra}, {\bf 52}, (1988) 199--212.

\bibitem{y3}
M. Zafrullah, The $D + XD_S[X]$ construction from GCD-domains, {\it J. Pure Appl. Algebra}, {\bf 50}, (1988) 93--107.	

\bibitem{0}
D.D. Anderson, D.F. Anderson, M. Zafrullah, Factorization in integral domains, {\it Journal of Pure and Applied Algebra}, {\bf 69}, (1990) 1--19.

\bibitem{y4}
M. Fontana and S. Kabbaj, On the Krull and valuative dimension of $D + XD_S[X]$ domains, {\it J. Pure Appl. Algebra}, {\bf 63}, (1990) 231--245.

\bibitem{1}
D.D. Anderson, D.F. Anderson, and M. Zafrullah, Rings between D[X] and K[X], {\it Houston J. of Mathematics}, {\bf 17}, (1991) 109--129.

\bibitem{8}
G. Picavet, About GCD domains, {\it Advances in commutative ring theory, in: Lect. Notes Pure Appl. Math.} {\bf 205}, (1999) 501--519. 	
	
\bibitem{3}
D.D. Anderson, GCD domains, Gauss’ Lemma and contents of polynomials. {\it Chapman, S.T., Glaz, S. (eds.) Non-Noetherian commutative ring theory, Math. Appl.}, Kluwer, Dordrecht {\bf 520}, (2000) 1–-31.	
	
\bibitem{2}
H. Kim, Factorization in monoid domains, {\it Comm. Algebra}, {\bf 29}, (2001) 1853--1869.

\bibitem{9}
T. Shah and W.A. Khan, On Factorization properties of monoid S and monoid domain D[S], {\it International Mathematical Forum}, {\bf 5}(18), (2010) 891--902.

\bibitem{z4}
H. Kulosman, A new simple example of an atomic domain which is not ACCP, \url{https://arxiv.org/pdf/1805.10375.pdf}, (2018).

\bibitem{5}
P. J\c{e}drzejewicz, M. Marciniak, \L . Matysiak and J. Zieli\'nski, On properties of square-free elements in commutative cancellative monoids, {\it et al. Semigroup Forum} (2019). https://doi.org/10.1007/s00233-019-10022-3
	
\end{thebibliography}
\end{document}